\documentclass{amsart}
\usepackage{amsfonts,  amssymb, bbm}

\usepackage{overpic}
\topmargin .35in
\oddsidemargin 0.6in
\evensidemargin 0.6in
\textheight 7.8 in
\textwidth 5.5 in
\addtolength{\baselineskip}{4pt}

\newtheorem{theorem}{Theorem}[section]
\theoremstyle{plain}
\newtheorem{corollary}[theorem]{Corollary}
\newtheorem{lemma}[theorem]{Lemma}
\newtheorem{prop}[theorem]{Proposition}
\theoremstyle{remark}

\numberwithin{equation}{section}
\newcommand{\tr}{\operatorname{tr}}
\newcommand{\otr}{\operatorname{0-tr}}
\newcommand{\vol}{\operatorname{Vol}}

\newcommand{\re}{\operatorname{Re}}
\newcommand{\im}{\operatorname{Im}}
\newcommand{\res}{\operatorname{Res}}

\newcommand{\supp}{\operatorname{supp}}

\newcommand{\bbR}{\mathbb{R}}
\newcommand{\bbH}{\mathbb{H}}

\newcommand{\bbC}{\mathbb{C}}
\newcommand{\bbZ}{\mathbb{Z}}
\newcommand{\bbN}{\mathbb{N}}
\newcommand{\calR}{\mathcal{R}}

\newcommand{\calZ}{\mathcal{Z}}
\newcommand{\cinf}{C^\infty}
\newcommand{\del}{\partial}

\newcommand{\vep}{\varepsilon}

\newcommand{\nh}{\tfrac{n}{2}}
\newcommand{\norm}[1]{\left\Vert #1 \right\Vert}
\newcommand{\abs}[1]{\left|#1 \right|}
\newcommand{\brak}[1]{\langle #1 \rangle}
\newcommand{\Ai}{{\rm Ai}}

\begin{document}

\title[Resonance asymptotics]{Resonance asymptotics for asymptotically
hyperbolic manifolds with warped-product ends}
\author[Borthwick]{David Borthwick}
\thanks{Borthwick supported in part by NSF\ grant DMS-0901937.}
\author[Philipp]{Pascal Philipp}
\date{\today}

\begin{abstract}
We study the spectral theory of asymptotically hyperbolic manifolds with ends of warped product type.  Our main result is an upper bound on the resonance counting function with a geometric constant expressed in terms of the respective Weyl constants for the core of the manifold and the base manifold defining the ends.
\end{abstract}
\maketitle

\section{Introduction}

In this paper we study the spectral theory of an
\emph{asymptotically hyperbolic manifold with warped-product ends} $(X,g)$, with $\dim X = n+1$, $n\ge 1$. 
By this we mean that $X$ admits a decomposition
\[
X = K \sqcup X_0,
\]
where $X_0 = (0,1] \times \Sigma$ with $(\Sigma,h)$ a compact Riemannian
manifold without boundary,
\begin{equation}\label{wp.metric}
g|_{X_0} = \frac{dx^2 + h}{x^2},
\end{equation}
and $K$ is a compact manifold with boundary $\del K \simeq \Sigma$.  We allow
$\Sigma$ to be disconnected, so that multiple ends can be considered without
changing the notation.

For a general conformally compact, asymptotically hyperbolic manifold, Joshi-S\'a Barreto 
\cite{JS:2000} proved the existence of a product decomposition near infinity with
a metric of the form \eqref{wp.metric} with $h = h(x,y,dy)$, meaning that $h$ could depend on $x$.  
Our restriction to warped-product ends amounts to taking a fixed metric $h$ independent of $x$.

In the $n=1$ case, $\Sigma$ could only be a circle, and the model $X_0$ is isometric to the
flared end of the parabolic cylinder $\bbH^2/\brak{z \mapsto z+1}$.  In higher dimensions $X_0$ will generally 
not have constant curvature.

Since \eqref{wp.metric} implies in particular that $g$ is an \emph{even} asymptotically hyperbolic metric, the resolvent $R_g(s) := (\Delta - s(n-s))^{-1}$ admits a meromorphic continuation to $s\in \bbC$, with poles of finite rank, by Mazzeo-Melrose \cite{MM:1987} and Guillarmou \cite{Gui:2005a}.  We define the resonance set $\calR_g$ to be the set of poles of $R_g(s)$, repeated according to multiplicity.  The corresponding resonance counting function is
\begin{equation}\label{Ndef}
N_g(t) := \#\left\{\zeta\in\calR_g:\>\abs{\zeta-\nh} \le t \right\}.
\end{equation}

In the full asymptotically hyperbolic setting, we know essentially nothing of the resonance set beyond the meromorphic continuation result that allows its definition.  At this level of generality, we have no bounds on $N_g(t)$ and no existence results for $\calR_g$.  The only general information we have on resonance distribution is a result of Guillarmou \cite{Gui:2005} gives exponentially thin resonance-free regions near the critical line $\re s = \nh$.   All of the current resonance counting results for asymptotically hyperbolic metrics actually assume that the sectional curvature is constant outside a compact set.  (This allows a more direct construction of the parametrix for the resolvent than in the general case.)  Under this stronger assumption, we have $N_g(t) \asymp t^{n+1}$, as well as a Poisson-type trace formula expressing the regularized wave trace as a sum over the resonance set \cite{GZ:1995b, CV:2003, Borthwick:2008}.

In this paper, we establish the following (relative) Poisson formula for resonances:
\begin{theorem}\label{rel.poisson}
Assume $(X,g)$ is an asymptotically hyperbolic manifold with warped-product ends.  Let $\Delta_0$ denote the Laplacian
with Dirichlet boundary conditions on the model end $(X_0,g)$, and $\calR_0$ the corresponding resonance set.  The difference of the regularized wave traces satisfies
\[
\begin{split}
&\otr \left[ \cos \left(t \sqrt{\Delta_{g} - \tfrac{n^2}4}\,\right) \right] 
- \otr \left[ \cos \left(t \sqrt{\Delta_{0} - \tfrac{n^2}4}\,\right) \right]  \\
&\qquad = \frac12 \sum_{\zeta\in \calR_g} e^{(\zeta-\frac{n}2)|t|} 
- \frac12 \sum_{\zeta\in \calR_0} e^{(\zeta-\frac{n}2)|t|},
\end{split}
\]
in the sense of distributions on $\bbR - \{0\}$.
\end{theorem}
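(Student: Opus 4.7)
The plan is to use the contour-integration approach pioneered by Guillopé--Zworski \cite{GZ:1995b} and developed further in \cite{Borthwick:2008}, adapted to the 0-regularized trace $\otr$ and to the warped-product end. The idea is to express the relative wave trace as a contour integral of the relative resolvent trace in the $s$-plane, and then deform the contour to pick up residues at the resonances of $R_g$ and $R_0$.

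First, I would analyze the relative resolvent. Using \eqref{wp.metric} and separating variables in an eigenbasis of $\Delta_h$ on $(\Sigma, h)$, the model resolvent $R_0(s)$ reduces mode-by-mode to an explicit one-dimensional problem in $x$ with Bessel-type solutions. Gluing $R_g(s)$ on the compact core to $R_0(s)$ on the end via a standard cut-off parametrix, one sees that $R_g(s) - R_0(s)$ is 0-trace class off its poles and extends meromorphically to $s \in \bbC$. I would then set
\[
\sigma(s) := \otr\bigl[R_g(s) - R_0(s)\bigr],
\]
a meromorphic function whose residue at each $\zeta \in \calR_g$ equals the multiplicity $m_g(\zeta)$, and at each $\zeta \in \calR_0$ equals $-m_0(\zeta)$.

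Second, I would relate the 0-regularized wave trace to $\sigma$ via a contour representation. Under the substitution $\lambda = s(n-s)$ one has $\cos\bigl(t\sqrt{\lambda - n^2/4}\bigr) = \cosh\bigl(t(s - n/2)\bigr)$, and the functional calculus for $A_\bullet := \sqrt{\Delta_\bullet - n^2/4}$ yields a contour-integral identity of the schematic form
\[
\otr\bigl[\cos(tA_g)\bigr] - \otr\bigl[\cos(tA_0)\bigr] = -\frac{1}{2\pi i}\oint_\Gamma (2s-n)\cosh\bigl(t(s - n/2)\bigr)\,\sigma(s)\, ds,
\]
with $\Gamma$ an initial contour enclosing the continuous spectrum in the physical region $\re s > \tfrac{n}{2}$. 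Splitting $\cos = \tfrac12(e^{it\cdot} + e^{-it\cdot})$ and deforming the contour across the critical line in the direction where the exponential factor decays, each resonance $\zeta \in \calR_g$ contributes $\tfrac12 m_g(\zeta)\, e^{(\zeta - n/2)|t|}$, and each $\zeta \in \calR_0$ contributes $-\tfrac12 m_0(\zeta)\, e^{(\zeta - n/2)|t|}$. The $|t|$ arises naturally from the choice of half-plane dictated by the sign of $t$, combined with the evenness of $\cos$.

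The main obstacle is justifying the contour deformation distributionally on $\bbR\setminus\{0\}$, which requires polynomial-in-$|s|$ bounds on $\sigma(s)$ in vertical strips as $\abs{\im s} \to \infty$. These bounds are precisely what the warped-product hypothesis makes tractable: separation of variables reduces the trace of the model resolvent to a convergent sum over eigenvalues $\nu_j$ of $\Delta_h$ of explicit Bessel-quotient expressions, whose large-$\abs{\im s}$ behavior is controlled by uniform asymptotics for modified Bessel functions, and a semiclassical parametrix then transfers the estimate from $R_0$ to $R_g$. Once this polynomial bound on $\sigma$ is in hand, the contour can be pushed off to infinity against any test function supported in $\bbR\setminus\{0\}$, and the claimed identity follows in $\mathcal{D}'(\bbR\setminus\{0\})$.
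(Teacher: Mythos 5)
Your proposal takes a genuinely different route from the paper. The paper does not directly deform a contour integral of a relative resolvent trace. Instead it works with the relative scattering determinant $\tau(s) = \det S_0(s)^{-1}S_g(s)$: Proposition~\ref{tau.divisor} establishes a Hadamard factorization $\tau(s) = e^{q(s)} H_g(n-s)H_0(s)/(H_g(s)H_0(n-s))$ using the order bound from Proposition~\ref{Nbnd.prop} and the multiplicity result of Guillarmou~\cite{Gui:2005}, and then the Birman--Krein type formula \eqref{BK.formula} (which follows from \cite[Thm.~3.10]{Gui:2009}) expresses $-\del_s\log\tau(s)$ in terms of $\otr[R_g(s) - R_g(n-s)] - \otr[R_0(s) - R_0(n-s)]$. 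Taking the Fourier transform of this identity, with the explicit rational structure of $\del_s\log\tau$ provided by the factorization, yields the Poisson formula as in \cite{GZ:1997, Borthwick:2008}. Note that the $0$-traces appearing in the paper compare each resolvent against its own reflection $s \leftrightarrow n-s$, each on its own manifold; the paper never needs to form the hybrid object $\otr[R_g(s)-R_0(s)]$.

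Your proposal has several genuine gaps. First, $\sigma(s) := \otr[R_g(s) - R_0(s)]$ is not directly well-defined: $R_g$ and $R_0$ act on different manifolds, and the $0$-trace is a Hadamard finite part defined on a single manifold; you would need a cutoff/gluing construction and then a proof that the resulting regularization behaves well, which is not automatic. Second, you assert that $\sigma(s)$ has residue $m_g(\zeta)$ at $\zeta\in\calR_g$ and $-m_0(\zeta)$ at $\zeta\in\calR_0$, but for a formal finite-part trace this is not obvious; the paper obtains the correct multiplicity count indirectly from Guillarmou's relation \eqref{nu.mumu} between scattering poles and resonance multiplicities, which also handles the cancellation of the potential conformal poles at $\nh - \bbN$ and the reflection $\zeta \leftrightarrow n-\zeta$. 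Third, the polynomial-in-$|s|$ bound on $\sigma(s)$ in vertical strips that you invoke to push the contour to infinity is never established; the paper's estimates (Lemma~\ref{Ds.lemma}, Proposition~\ref{prop2.tau}) control the Fredholm determinant $D(s)$ and the scattering determinant $\tau(s)$, not a resolvent $0$-trace, and the Hadamard factorization is precisely what converts those into the convergent distributional sum over resonances. Without addressing these three points the contour deformation cannot be justified, and the sums over $\calR_g$ and $\calR_0$, which individually diverge, do not obviously make sense as distributions.
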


Here, as in \cite{GZ:1997, Borthwick:2008}, the $0$-trace is a formal trace defined as the Hadamard finite part for $\vep\to 0$ of the integral over $\{x \ge \vep\}$ of the restriction of the kernel to the diagonal.

For asymptotically hyperbolic $(X,g)$ we can define a scattering matrix $S_g(s)$ as in \cite{JS:2000}.  This is a family of pseudodifferential operators on $\Sigma$, meromorphic in $s\in \bbC$. For such metrics the relationship between resonances and poles of $S_g(s)$ was established in \cite{GZ:1997, BP:2002, Gui:2005}.  In particular, Guillarmou~\cite{Gui:2005} showed that $S_g(s)$ may have `conformal' poles  
$s\in \nh -\bbN$ which do not correspond to resonances.  However, in the case of asymptotically hyperbolic metrics of warped-product type, we will see that these conformal poles are ruled out in any dimension.  Hence the multiplicities of scattering poles agree with those of the resonance set, except possibly at the finitely many points $s$ where $s(n-s)$ is a discrete eigenvalue of $\Delta_g$.

One application of the Poisson formula of Theorem~\ref{rel.poisson} is a Weyl asymptotic for the \emph{relative scattering phase}, which is defined as the log of the Fredholm determinant of $S_g(s) S_0(s)^{-1}$ (see Corollary~\ref{scphase.cor}).  Because of the connection between resonances and scattering poles, we can use this asymptotic in conjunction with a contour integral involving $\det S_g(s) S_0(s)^{-1}$ to produce a precise upper bound on the resonance counting function.

To state the result, we introduce the classical Weyl constants for the compact manifolds $K$ and $\Sigma$:
\[
W_K := \frac{\vol(K, g)}{(4\pi)^{\frac{n+1}2}\Gamma(\frac{n+3}2)} ,  \quad
W_\Sigma :=  \frac{\vol(\Sigma, h)}{(4\pi)^{\frac{n}2}\Gamma(\frac{n+2}2)} .
\]
For $\arg \alpha \in [0,\tfrac{\pi}2]$, define
\[
\rho(\alpha) := \sqrt{\alpha^2+1} + \alpha \log \left(\frac{i}{\alpha + \sqrt{\alpha^2+1}}\right).
\]
Denote by $\alpha_0\approx 1.509$ the point where $\{\re \rho(\alpha) = 0\}$ meets the real axis, and let the curve $\gamma$ be defined as  the portion of $\{\re \rho(\alpha) = 0\}$ that connects $i$ and $\alpha_0$ (c.f.~Figure \ref{gamma.fig}). 

\begin{theorem}\label{main.thm}
For $(X,g)$ an asymptotically hyperbolic metric of warped-product type, 
\begin{equation}\label{tN.bound}
(n+1)\int_0^a \frac{N_g(t)}{t} \: dt \le \Bigl[2W_K + c_n W_\Sigma \Bigr] a^{n+1} + o(a^{n+1}),
\end{equation}
where the dimensional constant is
\begin{equation}\label{constants}
\begin{split}
c_n & := \frac{2n}{(n+1)\pi} \int_{\gamma}
\frac{\abs{\rho'(\alpha)}}{\abs{\alpha}^{n+1}} \: \abs{d\alpha} 
+ \frac{\alpha_0^{-n}}{n+1}  \\
&\qquad + \frac{n(n+1)}{\pi} \int_{-\tfrac\pi2}^{\tfrac\pi2} \int_0^\infty \frac{[-\re \rho(xe^{i\abs\theta})]_+}{x^{n+2}} \: dx \: d\theta.
\end{split}
\end{equation}
\end{theorem}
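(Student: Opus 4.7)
The strategy is a Jensen-type argument applied to the relative scattering determinant
\[
\tau(s) := \det\bigl[ S_g(s) S_0(s)^{-1} \bigr],
\]
whose divisor encodes the difference of the resonance sets $\calR_g$ and $\calR_0$. The plan breaks into four steps.

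\textbf{Step 1 (divisor identity and Jensen).} Because the warped-product hypothesis rules out conformal poles (as remarked just before the statement), the multiplicities of scattering poles agree with those of the resonances in $\re s \le \nh$, so there the zeros minus poles of $\tau(s)$ count $\calR_g - \calR_0$ up to finitely many corrections from the eigenvalues of $\Delta_g$. Extending via the scattering functional equation $S_g(s) S_g(n-s) = I$ so that both halves of the disk are captured, Jensen's formula applied on $\{|s-\nh| \le a\}$ yields
\[
\int_0^a \frac{N_g(t) - N_0(t)}{t}\, dt = \frac{1}{2\pi} \int_0^{2\pi} \log\bigl|\tau(\nh + ae^{i\theta})\bigr|\, d\theta + O(\log a).
\]

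\textbf{Step 2 (real-axis contribution).} The scattering-phase asymptotic cited as Corollary~\ref{scphase.cor}, derived from the relative Poisson formula of Theorem~\ref{rel.poisson}, gives the Weyl behavior of $\log|\tau(\nh+ir)|$ as $|r|\to\infty$. Integrated over the vertical portion of the circle $|s-\nh|=a$, this produces the $2W_K\,a^{n+1}$ contribution to (\ref{tN.bound}); the factor $2$ reflects the phase being picked up on both halves of the critical line.

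\textbf{Step 3 (complex-plane estimate of $\tau$).} The heart of the proof is bounding $\log|\tau(s)|$ off the critical line, which reduces essentially to bounding $\log|\det S_0(s)|$ since the compact core contribution is controlled in Step~2. The warped-product structure of $(X_0,g)$ allows separation of variables: indexing by the Laplace eigenvalues $\mu_j^2$ of $(\Sigma,h)$, one obtains $\det S_0(s) = \prod_j \sigma_j(s)$, with each scalar block $\sigma_j(s)$ an explicit ratio of special functions arising from a Bessel-type radial ODE. Uniform asymptotics in the joint regime $|s-\nh|, \mu_j \to \infty$ with ratio $\alpha := \mu_j/(s-\nh)$ fixed produce the phase function $\rho(\alpha)$. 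The curve $\gamma = \{\re\rho = 0\}$, meeting the real axis at $\alpha_0$, separates oscillatory from exponentially growing/decaying regimes. Converting the sum on $j$ to an integral via the Weyl law $N_\Sigma(\mu) = W_\Sigma \mu^n + o(\mu^n)$, the three pieces of $c_n$ in (\ref{constants}) arise respectively from (i) the phase accumulation along $\gamma$, (ii) the endpoint contribution at $\alpha_0$, and (iii) the exponentially amplified off-axis modes contributing $[-\re\rho(xe^{i|\theta|})]_+$.

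\textbf{Step 4 (conclusion).} Multiplying the identity of Step~1 by $(n+1)$ and inserting the bounds from Steps~2 and~3 gives (\ref{tN.bound}) after discarding the nonnegative $N_0$ term on the left.

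\textbf{Main obstacle.} The principal difficulty lies in Step~3: producing uniform asymptotic expansions of $\sigma_j(s)$ that are sharp enough to extract precisely the constant $c_n$ rather than merely an upper bound. The matching of asymptotic regimes across the turning-point curve $\gamma$, together with the careful conversion of the discrete $j$-sum to an $\alpha$-integral weighted by the Weyl density of $\Sigma$, constitutes the technical core of the argument.
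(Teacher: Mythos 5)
Your overall strategy---contour integration against the relative scattering determinant $\tau(s) = \det S_0(s)^{-1} S_g(s)$, combined with the scattering-phase asymptotic on the critical line and Bessel-function asymptotics off it---is exactly the route the paper takes. However there are concrete errors in Steps 1, 3, and 4 that would derail the argument.

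\emph{Step 1 is wrong as stated.} By the Hadamard factorization (Proposition~\ref{tau.divisor}), the divisor of $\tau$ is symmetric under $s \mapsto n-s$: $\tau$ has zeros at $n - \calR_g$ and $\calR_0$, and poles at $\calR_g$ and $n - \calR_0$. Hence the net zero-minus-pole count in any disk $\{|s - \nh| \le t\}$ is identically zero, and Jensen's formula on the \emph{full} disk gives only the trivial identity $0 = \log|\tau(\nh)|$. You must instead work on a \emph{half}-disk contour (half-circle in $\re s \ge \nh$ plus the critical-line segment), as in the paper's Proposition~\ref{relcount.prop}; there the net count in the right half-disk is genuinely $N_g(a) - N_0(a)$, the half-circle contributes $\frac{1}{2\pi}\int_{-\pi/2}^{\pi/2}\log|\tau|\,d\theta$, and the critical-line segment contributes the scattering-phase term $2\int_0^a \sigma(t)/t\,dt$. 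There is no ``vertical portion of the circle''; the $W_K$ contribution comes from the straight critical-line piece of the contour, not the arc.

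\emph{Step 3 misattributes the terms in $c_n$.} Once you have the correct half-disk identity, the half-circle integral of $\log|\tau|$ over $|\theta| < \pi/2$ produces only the \emph{third} term of $c_n$ (the double integral over $\theta$ and $x$ of $[-\re\rho]_+$; see Proposition~\ref{prop2.tau}). The first two terms---the $\gamma$-line integral and the $\alpha_0^{-n}$ term---come from an \emph{independent} asymptotic count of the model resonances, i.e.\ the asymptotic $N_0(t) \sim c\,t^{n+1}$ of Proposition~\ref{modelcount.prop}, obtained by counting zeros of $I_{-\nu}(\lambda)$ directly (nontrivial zeros near $\gamma$, plus trivial real zeros beyond $\alpha_0$). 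This $N_0$-asymptotic is a separate computation and cannot be folded into the $\log|\tau|$ estimate on the right half-disk.

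\emph{Step 4 has the sign backwards.} The identity reads $\int_0^a (N_g - N_0)/t\,dt = (\text{RHS})$, i.e.\ $\int_0^a N_g/t\,dt = \int_0^a N_0/t\,dt + (\text{RHS})$. ``Discarding'' the nonnegative $N_0$-integral would give a \emph{lower} bound on $\int N_g/t$, not an upper bound. To obtain the stated upper bound you must keep the $N_0$-integral, substitute its asymptotic from Proposition~\ref{modelcount.prop}, and add it to the right-hand side; that is precisely how the first two terms of $c_n$ enter. You also need a separate minimum-modulus argument to control the thin angular wedges $\tfrac\pi2 - \vep a^{-2} \le |\theta| \le \tfrac\pi2$ that Proposition~\ref{prop2.tau} leaves out.

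Once these corrections are made, the proof lines up with the paper's, but as written the proposal has genuine logical gaps in the Jensen step, the source of the constants, and the final inequality.
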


The integrated counting function that appears in \eqref{tN.bound} is common usage in applications of Jensen's formula in complex analysis.
The bound \eqref{tN.bound} implies a corresponding bound for $N_g(t)$, at the cost of an extra factor of $e$ in the constant. 
(An asymptotic for the integrated form would be equivalent to an asymptotic for $N_g(t)$ with the same constant.)

To prove these results we will first establish a suboptimal bound on the growth of $N_g(t)$ using the Fredholm determinant method.
Using this crude estimate of the order of growth, we can apply the methods used in the hyperbolic-near-infinity case in \cite{Borthwick:2008} to prove Theorem~\ref{rel.poisson}.  To prove Theorem~\ref{main.thm}, we first develop an exact asymptotic for the model counting function $N_0(t)$, which is (from Proposition~\ref{modelcount.prop})
\[
N_0(t) = \left[ \frac{2n}{(n+1)\pi} \int_{\gamma} \frac{\abs{\rho'(\alpha)}}{\abs{\alpha}^{n+1}} \: d\abs\alpha 
+ \frac{\alpha_0^{-n}}{n+1} \right] W_\Sigma \>t^{n+1} + O\big(t^{n+\frac13}\big).
\]
Then we use the relative counting formula provided by a contour integral of the scattering determinant to obtain the sharper estimate for $N_g(t)$.

\section{The model case}\label{model.sec}

The model space is $X_0 := (0,1]\times \Sigma$, where $(\Sigma, h)$ is a
compact $n$-dimensional Riemannian manifold without boundary.  The metric on $X_0$ is the warped-product
\[
g_0 := \frac{dx^2 + h}{x^2}.
\]
The corresponding Laplacian is
\[
\Delta_0 = - (x\del_x)^2 + nx\del_x + x^2 \Delta_h.
\]
We can take the boundary to be $\{x = 1\}$ without loss of generality.  By the scale-invariance of the $dx^2$ component of $g_0$, imposing the boundary condition at some other value $x = b$ would be equivalent to rescaling $h \leadsto b^{2}h$.

\subsection{Spectral operators}\label{sop.sec}
Suppose that $\{\phi_\lambda\}$ is a complete set of eigenfunctions for
$\Delta_h$, with the convention
\[
\Delta_h \phi_\lambda = \lambda^2 \phi_\lambda.
\]
For $w = u(x) \phi_\lambda$, the equation $(\Delta_0 - s(n-s)) w = 0$ translates
to the coefficient equation
\begin{equation}\label{coeff.eq}
\Bigl[ - (x\del_x)^2 + nx\del_x + \lambda^2 x^2  - s(n-s)\Bigr]u = 0.
\end{equation}
This is a modified Bessel equation, with the Bessel parameter given by
\[
\nu := s - \nh.
\]
To simplify formulas, we will make this identification throughout this section and switch freely between $s$ and $\nu$.

The general solution to \eqref{coeff.eq} is a linear combination of the terms $x^{\frac{n}2} I_{\pm\nu} (\lambda x)$ for $\lambda>0$ and $x^{\frac{n}2\pm \nu}$ for $\lambda=0$.
As $x\to 0$ the Bessel function has asymptotic
\begin{equation}\label{Iasymp.0}
I_\nu(\lambda x) \sim  \frac{1}{\Gamma(\nu+1)} \left( \frac{\lambda
x}{2}\right)^{\nu},
\end{equation}
for $\nu \notin - \bbN$.
For future use we single out the `outgoing' solutions
\[
\begin{split}
u_\lambda^+(s;x) &:= x^{\frac{n}2} I_\nu (\lambda x) \quad\text{for }\lambda \ne 0,\\
u^+_0(s;x) &:= x^s,
\end{split}
\] 
which have asymptotics proportional to $x^s$ as $x\to 0$.  
We will also need solutions satisfying the boundary condition at $x=1$,
\[
\begin{split}
u_\lambda^0(s;x) & := \frac{\Gamma(\nu) \Gamma(1-\nu)}{2} x^{\frac{n}2} \bigl[ I_{\nu} (\lambda) I_{-\nu} (\lambda
x) - I_{-\nu} (\lambda ) I_{\nu} (\lambda x) \bigr] \quad\text{for }\lambda \ne 0,\\
u^0_0(s; x) &:= \frac{1}{2\nu} \left[x^{n-s} - x^s\right].
\end{split}
\]
The Gamma factors are included in $u_\lambda^0(s)$ to cancel zeros that would otherwise occur at $\nu \in \bbZ$ (c.f.~\eqref{I.reflect}).  Similarly, $u^0_0(s)$ is not actually singular at $s=\nh$; it simply takes on the limiting value $u^0_0(\nh; x) := - x^{n/2} \log x$.

We can easily express the model resolvent, Poisson kernel, and scattering matrix
in terms of the solutions $u_\lambda^+$ and $u_\lambda^0$.  

\subsubsection{Resolvent}
With respect to the eigenbasis $\{\phi_\lambda\}$ for $\Sigma$, the kernel of
the resolvent can be written
\begin{equation}\label{R0.decomp}
R_0(s; x, \omega, x', \omega') := \sum_\lambda a_\lambda(s;x,x')
\phi_\lambda(\omega) \overline{\phi_\lambda(\omega')},
\end{equation}
where the coefficients satisfy
\[
\Bigl[ - (x\del_x)^2 + nx\del_x + \lambda^2 x^2 - s(n-s)\Bigr] a_\lambda(s; x,
x') =  x^{n+1}\delta(x-x'),
\]
with boundary conditions $a_\lambda(s;x,x') \sim c(s,x') x^s$ at $x=0$ and $a_\lambda(s;1,x') = 0$.  The unique
solution satisfying these conditions is
\[
a_\lambda(s;x,x') = A_\lambda(s) \begin{cases}u_\lambda^+(s; x) u_\lambda^0(s; x') & x
\le x' \cr
u_\lambda^0(s; x) u_\lambda^+(s; x') & x \ge x', \end{cases}
\]
with 
\[
A_\lambda(s) := \frac{1}{I_\nu(\lambda)},
\]
for $\lambda \not= 0$ and 
\[
A_0(s) :=  1.
\]
From the explicit formula for $a_\lambda(s;x,x')$ we can read off the model resonance set,
\begin{equation}\label{R0.def}
\calR_0 = \bigcup_{\substack{\lambda^2 \in \sigma(\Delta_h)\\ \lambda\not=0}} 
\left\{s \in \bbC:\> I_{s-\frac{n}2}(\lambda) = 0 \right\}.
\end{equation}
Since $I_\nu(z)$ is nonzero for $z >0$ and $\re \nu \ge 0$, the resonance set lies completely in the half-plane 
$\re s < \nh$.  An example of the model resonance set is shown in Figure~\ref{X0Plot.fig}.
\begin{figure}
\begin{center}
\begin{overpic}[scale=.7]{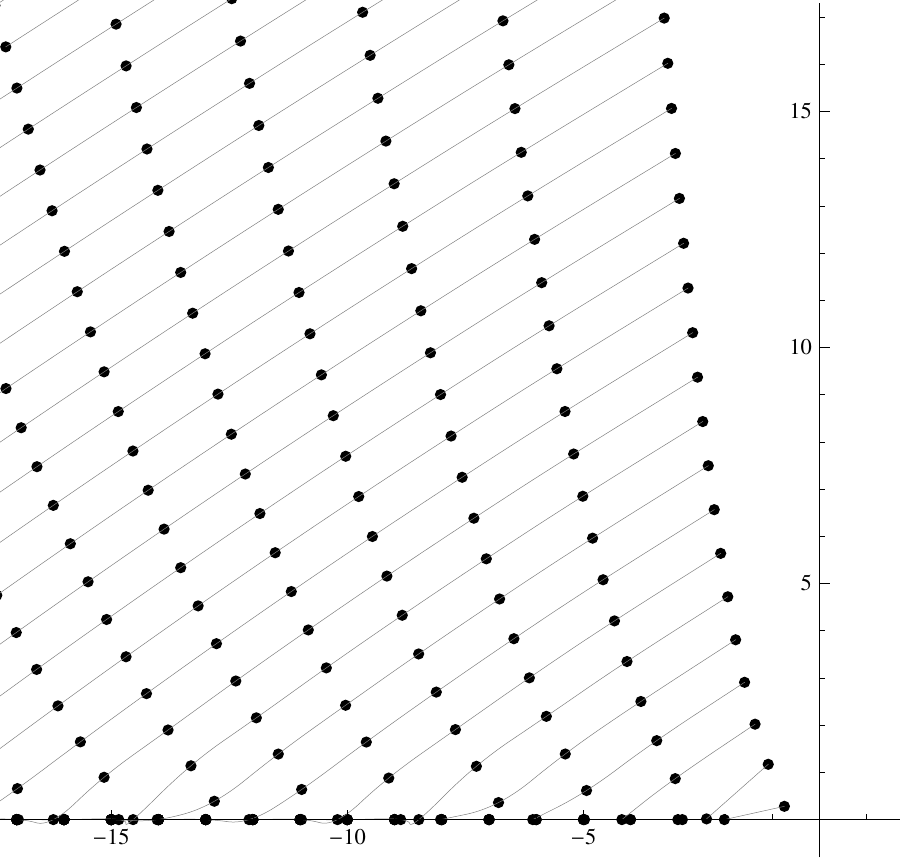}
\end{overpic}
\end{center}
\caption{Resonance plot for the model case $X_0 = (0,1] \times S^2$.  The thin lines indicate the spherical harmonic mode $l$,
starting from $l = 1$ in the bottom right corner.  The multiplicity on each line is $2l+1$.}\label{X0Plot.fig}
\end{figure}

\subsubsection{Poisson operator}\label{poisson.op.sec}

The Poisson operator $E_0(s)$ maps functions on $\Sigma$ to generalized eigenfunctions of $\Delta_0$ on $X_0$.
Its kernel is obtained from the resolvent by
\[
E_0(s; z,\omega') := \lim_{x'\to 0} {x'}^{-s} R_0(s;z,z'),
\]
where $z' = (x',\omega')$.  We can thus derive from \eqref{R0.decomp} the decomposition
\[
E_0(s; x, \omega, \omega') := \sum_\lambda b_\lambda(s;x) \phi_\lambda(\omega)
\overline{\phi_\lambda(\omega')},
\]
where by \eqref{Iasymp.0} we have
\begin{equation}\label{bl.def}
\begin{split}
b_\lambda(s;x) & = \frac{1}{\Gamma(\nu+1)} \left(\frac{\lambda}{2}\right)^{\nu} \frac{u_\lambda^0(x)}{I_\nu(\lambda)}\quad\text{for }\lambda \ne 0, \\
b_0(s;x) &= \frac{1}{2\nu} \left[ x^{n-s} - x^s \right].
\end{split}
\end{equation}

\subsubsection{Scattering matrix}

The scattering matrix can be derived from the Poisson operator through the two-part asymptotic, for $f \in \cinf(\Sigma)$ and $s\notin \bbZ/2$,
\[
(2s-n) E(s) f \sim x^{n-s}f + x^s S_0(s) f,
\]
as $x \to 0$.  In the model case $S_0(s)$ is diagonalized by the eigenfunctions $\{\phi_\lambda\}|_{\lambda \in \sigma(\Delta_h)}$, and we will use $[S_0(s)]_\lambda$ to denote the corresponding eigenvalue.

From \eqref{bl.def}, using \eqref{Iasymp.0}, we derive
\begin{equation}\label{S0.def}
\begin{split}
[S_0(s)]_\lambda &= \left( \frac{\lambda}{2}\right)^{2\nu}
\frac{\Gamma(-\nu)}{\Gamma(\nu)} 
\frac{I_{-\nu}(\lambda)}{I_\nu(\lambda)}\quad\text{for }\lambda \ne 0, \\
[S_0(s)]_0 &= - 1.
\end{split}
\end{equation}

The scattering poles $(X, g_0)$ are defined as the poles of the normalized scattering matrix
\[
\tilde S_0(s) := \frac{\Gamma(s-\frac{n}2)}{\Gamma(\frac{n}2-s)} S_0(s).
\]
In general the set of scattering poles could differ from the resonance set at certain points, but for the model case we see that the set of scattering poles is also given by $\calR_0$.

\subsection{Bessel function estimates}\label{bessel.sec}

For bounded $z > 0$, we can estimate $I_\nu(z)$ easily from the series definition,
\begin{equation}\label{Iseries}
I_\nu(z) :=  \sum_{k=0}^{\infty} \frac{(z/2)^{\nu+2k}}{k! \Gamma(\nu+k+1)},
\end{equation}
which converges for all $\nu\in\bbC$.  Although it is a power series in $z$, this could also be viewed as an expansion in $\nu$ for $z$ fixed.  If $z$ is restricted to a compact interval away from zero, we have a uniform bound
\begin{equation}\label{Iseries.bnd}
I_\nu(z) = \frac{(z/2)^{\nu}}{\Gamma(\nu+1)}(1+ O(\nu^{-1})).
\end{equation}
This will cover the estimation of $I_\nu(\lambda x)$ where $\lambda$ lies in some bounded interval. 

For cases where $\lambda$ is large, we use the method from Olver \cite[\S11.10]{Olver}.   
Set
\begin{equation}\label{rho.def}
\rho(\alpha, x) := \sqrt{\alpha^2+x^2} + \alpha \log \left(\frac{i x}{\alpha +
\sqrt{\alpha^2+x^2}}\right),
\end{equation}
and
\[
\zeta := (\tfrac32 \rho)^\frac23.
\]
Frequently we will fix $x=1$ and then we simply write $\rho(\alpha) := \rho(\alpha, 1)$.
With $\alpha$ in the first quadrant, $\rho$ and $\zeta$ occupy the sectors $\arg \rho \in
(0,\frac{3\pi}2)$ and $\arg \zeta\in (0,\pi)$.  Note that $\zeta=0$ precisely when $\alpha = i x$. 
This corresponds to the \emph{turning point}
of the (transformed) Legendre equation.

\begin{prop}\label{IK.Ai.prop}  
For $\lambda$ sufficiently large we have
\[
\begin{split}
I_{\lambda \alpha}(\lambda x) &= \frac{2\pi^\frac12}{\Gamma(\lambda \alpha + 1)}
(-i\lambda\alpha)^{\lambda\alpha}
\alpha^{\frac12} e^{-\frac{\pi i}6} \lambda^{\frac16} e^{-\lambda \alpha} 
\left(\frac{\zeta}{\alpha^2+x^2}\right)^\frac14  \Ai\left(e^{-\frac{2\pi i}3}
\lambda^\frac23 \zeta \right) [1+O(\lambda^{-1})], \\
K_{\lambda \alpha}(\lambda x) &=  2^\frac12 \pi \lambda^{-\frac13} i^{\lambda\alpha} 
\left(\frac{\zeta}{\alpha^2+x^2}\right)^\frac14  \Ai\left( \lambda^\frac23 \zeta
\right) [1+O(\lambda^{-1})],
\end{split}
\]
uniformly for $\arg\alpha \in [0,\tfrac{\pi}2]$ and $x$ contained in a compact
interval of $\bbR_+$.
\end{prop}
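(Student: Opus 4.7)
The proposition is a direct application of the Liouville--Green (LG) method with a turning point, as developed by Olver, specialized to the modified Bessel equation. The plan splits naturally into three steps, with a fourth step devoted to bookkeeping.

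First, I would reduce the equation to LG normal form. The substitution $u(x) = x^{1/2} I_{\lambda\alpha}(\lambda x)$ turns the modified Bessel equation $x^2 w'' + x w' - \lambda^2(x^2+\alpha^2)w = 0$ into the Schr\"odinger-type equation
\[
u'' = \left[\lambda^2\frac{x^2+\alpha^2}{x^2} - \frac{1}{4x^2}\right]u,
\]
with coefficient $f(x,\alpha) = (x^2+\alpha^2)/x^2$ that vanishes to first order at the turning point $x = i\alpha$ (well inside the regime $\arg\alpha\in[0,\tfrac{\pi}{2}]$). The Liouville variable defined implicitly by $\tfrac23\zeta^{3/2} = \int_{i\alpha}^x \sqrt{f(t,\alpha)}\,dt$ can be computed explicitly, and one sees directly from the definition \eqref{rho.def} that it coincides with $\zeta = (\tfrac32\rho(\alpha,x))^{2/3}$. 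After the further substitution $U(\zeta) = (dx/d\zeta)^{-1/2}u(x)$, the equation takes the near-Airy form $U'' = [\lambda^2\zeta + \psi(\zeta,\alpha)]U$ with $\psi$ analytic and bounded uniformly on the relevant parameter range.

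Second, I would invoke Olver's theorem on uniform asymptotic expansions for equations of this form with a single simple turning point (\cite[\S11.10]{Olver}). This produces a uniform asymptotic expansion of every solution of the form
\[
U(\zeta) = \left(\frac{\zeta}{\alpha^2+x^2}\right)^{1/4}\bigl[A\,\Ai(\lambda^{2/3}\zeta) + B\,\lambda^{-4/3}\Ai'(\lambda^{2/3}\zeta)\bigr][1+O(\lambda^{-1})],
\]
with the error uniform for $\arg\alpha\in[0,\tfrac{\pi}{2}]$ and $x$ in any compact subset of $\bbR_+$. The particular solutions $I_{\lambda\alpha}(\lambda x)$ and $K_{\lambda\alpha}(\lambda x)$ are singled out by their behavior as $x\to 0^+$ and $x\to\infty$ respectively; the corresponding Stokes sectors of the Airy equation dictate which rotation of the argument, $e^{-2\pi i/3}\lambda^{2/3}\zeta$ for $I$ and $\lambda^{2/3}\zeta$ for $K$, must appear in order to select the subdominant solution in each case.

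Third, the multiplicative constants are pinned down by matching with classical non-uniform asymptotics. Writing $\nu = \lambda\alpha$ and $z = x/\alpha$, one compares the expressions above with the Debye expansions
\[
I_\nu(\nu z) \sim \frac{e^{\nu\eta}}{(2\pi\nu)^{1/2}(1+z^2)^{1/4}},\qquad K_\nu(\nu z) \sim \left(\frac{\pi}{2\nu}\right)^{1/2}\frac{e^{-\nu\eta}}{(1+z^2)^{1/4}},
\]
in a region where $x$ is bounded away from the turning point, so that both the Olver and the Debye expansions apply and reduce (via the large-$\zeta$ asymptotic of Airy) to the same Liouville--Green approximation. The prefactor $\tfrac{2\pi^{1/2}}{\Gamma(\lambda\alpha+1)}(-i\lambda\alpha)^{\lambda\alpha}e^{-\lambda\alpha}\alpha^{1/2}\lambda^{1/6}e^{-\pi i/6}$ for $I$, and correspondingly $2^{1/2}\pi\lambda^{-1/3}i^{\lambda\alpha}$ for $K$, are exactly the Stirling-type corrections needed to pass from the natural normalization in Olver's theorem to the normalization in which both the order and the argument scale with $\lambda$.

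The main obstacle is not the analysis but the tracking of branches. The objects $\rho$, $\zeta^{1/2}$, $(\zeta/(\alpha^2+x^2))^{1/4}$, $\alpha^{\lambda\alpha}$ and $i^{\lambda\alpha}$ all involve fractional powers whose consistent branches must be specified as $\alpha$ traverses the first quadrant and the turning point $i\alpha$ moves with it. The rotations $e^{-\pi i/6}$ and $i^{\lambda\alpha}$ arise precisely from this bookkeeping, and verifying them reduces to checking the limit $\arg\alpha=0$, where $I_{\lambda\alpha}(\lambda x)$ and $K_{\lambda\alpha}(\lambda x)$ are real-valued and the Debye expansion is textbook.
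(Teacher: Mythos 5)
Your proposal is essentially sound and reconstructs the architecture of the paper's proof faithfully in its first two steps. You make the Liouville--Green reduction explicit, whereas the paper simply cites Olver \cite[\S11.10]{Olver} for the Airy-type representation $I_{\lambda\alpha}(\lambda x) = c_1(\lambda,\alpha)\bigl(\tfrac{\zeta}{\alpha^2+x^2}\bigr)^{1/4}\Ai\bigl(e^{-2\pi i/3}\lambda^{2/3}\zeta\bigr)[1+O(\lambda^{-1})]$ and the analogous formula for $K_{\lambda\alpha}$, then focuses entirely on pinning down the normalizing constants $c_1,c_2$. Where you genuinely diverge is in the matching step. You propose to determine the prefactors by comparing against the Debye expansions of $I_\nu(\nu z)$ and $K_\nu(\nu z)$ in a region bounded away from the turning point, and to validate branch choices by continuation from $\arg\alpha = 0$. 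The paper instead fixes $c_1$ by sending $x\to 0$, where $\rho(\alpha,x) = \alpha\log x + \alpha + \alpha\log\tfrac{i}{2\alpha} + O(x^2)$ pushes the Airy argument to infinity in a sector where \eqref{ai.asym1} applies, and matches against the exact small-argument behavior $I_{\lambda\alpha}(\lambda x)\sim\Gamma(\lambda\alpha+1)^{-1}(\lambda x/2)^{\lambda\alpha}$; similarly $c_2$ is fixed by sending $x\to\infty$ and matching against $K_{\lambda\alpha}(\lambda x)\sim(\pi/2\lambda x)^{1/2}e^{-\lambda x}$. The paper's route has the advantage that the boundary behaviors used for matching are not themselves uniform asymptotics but exact classical limits, so no secondary uniformity argument is required; branch tracking is also somewhat more transparent because the limits $x\to 0$ and $x\to\infty$ lie in single Stokes sectors. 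Your Debye-matching route is correct in principle but carries the extra burden of establishing that the Debye expansion is valid uniformly for $\arg\alpha\in[0,\tfrac{\pi}{2}]$ (the textbook statement is for real order), which is precisely the kind of uniformity that Olver's theorem is being invoked to supply in the first place, so the logic is slightly circular unless one is careful to use Debye only on a small region and then continue analytically. Either way the constants come out the same.
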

\begin{proof}
From Olver \cite[\S11.10]{Olver} we obtain
\[
I_{\lambda \alpha}(\lambda x) = c_1(\lambda,\alpha)
\left(\frac{\zeta}{\alpha^2+x^2}\right)^\frac14 
\Ai\left(e^{-\frac{2\pi i}3} \lambda^\frac23 \zeta \right) [1+O(\lambda^{-1})].
\]
Note that as $x\to 0$, 
\[
\rho(\alpha, x) = \alpha\log x+ \alpha + \alpha \log \frac {i}{2\alpha} +
O(x^2).
\]
With $\alpha$ in the first quadrant, this limit takes $e^{-\frac{2\pi i}3} \zeta
\to \infty$ in the sector $(0,\tfrac{\pi}3)$. 
In this limit,
\[
 \left(\frac{\zeta}{\alpha^2+x^2}\right)^\frac14 
\Ai\left(e^{-\frac{2\pi i}3} \lambda^\frac23 \zeta \right) \sim
\frac{\alpha^{-\frac12}}{2\pi^\frac12} e^{\frac{\pi i}6} \lambda^{-\frac16} 
e^{\lambda [\alpha\log x+ \alpha + \alpha \log \frac {i}{2\alpha}]}.
\]
Comparing this to the asymptotic, as $x\to 0$,
\[
I_{\lambda \alpha}(\lambda x) \sim \frac{1}{\Gamma(\lambda \alpha + 1)}
\left(\frac{\lambda x}{2}\right)^{\lambda\alpha},
\]
we find that
\[
c_1(\lambda,\alpha) = \frac{2\pi^\frac12}{\Gamma(\lambda \alpha + 1)}
\left(\frac{\lambda}{2}\right)^{\lambda\alpha}
\alpha^{\frac12} e^{-\frac{\pi i}6} \lambda^{\frac16}  e^{-\lambda [\alpha +
\alpha \log \frac {i}{2\alpha}]}.
\]

Similarly, for $K_\nu$ we start from
\[
K_{\lambda \alpha}(\lambda x) = c_2(\lambda,\alpha)
\left(\frac{\zeta}{\alpha^2+x^2}\right)^\frac14 
\Ai\left(\lambda^\frac23 \zeta \right) [1+O(\lambda^{-1})].
\]
As $x\to \infty$, 
\[
\rho(\alpha,x) = x + \alpha \log i + o(1),
\]
so
\[
\left(\frac{\zeta}{\alpha^2+x^2}\right)^\frac14 \Ai\left(\lambda^\frac23 \zeta
\right) \sim
\frac{x^{-\frac12}}{2\pi^\frac12} \lambda^{-\frac16} e^{-\lambda(x + \alpha\log
i)},
\]
as $x\to\infty$.  On the other hand,
\[
K_{\lambda \alpha}(\lambda x) \sim \left( \frac{\pi}{2\lambda
x}\right)^{\frac12} e^{-\lambda x},
\]
as $x\to \infty$.  Thus
\[
c_2(\lambda, \alpha) = 2^\frac12 \pi \lambda^{-\frac13} e^{\lambda \alpha \log
i}.
\]
\end{proof}

The Airy function has zeros only on the negative real axis, with the first at $w \approx -2.338$.  For future reference we can thus note that
\begin{equation}\label{airy.zero}
\Ai(w) \asymp 1,\quad\text{for }\abs{w} \le 2.33.
\end{equation}
For large arguments we can use the well-known Airy function asymptotics \cite[\S4.4]{Olver}.  For $\abs{\arg w} < \pi-\delta$, we have the uniform estimate
\begin{equation}\label{ai.asym1}
\Ai(w) = \frac{1}{2\pi^{\frac12}} w^{-\frac14} e^{-\xi} \bigl[1 + O(|\xi|^{-1})\bigr],
\end{equation}
where $\xi := \tfrac23w^{\frac32}$, with the constant in the error term bounded by $c (\sin \delta/2)^{-1}$.  Most often we will use this simply to estimate
\begin{equation}\label{airy.sector}
\Ai(w) \asymp \brak{w}^{-\frac14} e^{-\xi}, \quad\text{for }\abs{\arg w} < \pi-\delta.
\end{equation}
We can extend this to the negative real axis using the identity
\begin{equation}\label{Ai.tri}
\Ai(w) = e^{\frac{\pi i}3} \Ai(e^{-\frac{2\pi i}3}w) +  e^{-\frac{\pi i}3} \Ai(e^{-\frac{4\pi i}3}w).
\end{equation}
From \eqref{ai.asym1} this yields
\begin{equation}\label{ai.asym2}
\Ai(w) = \frac{1}{2\pi^{\frac12}} w^{-\frac14} \left(\exp(-\tfrac23 w^{\frac32}) + i\exp(\tfrac23 w^{\frac32}) \right)
\bigl[1 + O(|w|^{-\frac32})\bigr],
\end{equation}
uniformly for $\arg w \in [\tfrac{\pi}3 + \delta, \tfrac{5\pi}3 - \delta]$.   

For the resulting estimates let us rescale $\rho$ to
\begin{equation}\label{psi.def}
\begin{split}
\psi(\nu,\lambda x) &:= \lambda \rho(\alpha,x) \\
&= \sqrt{\nu^2+\lambda^2 x^2} + \nu \log \left(\frac{i \lambda x}{\nu +
\sqrt{\nu^2+\lambda^2 x^2}}\right).
\end{split}
\end{equation}
Combining Proposition~\ref{IK.Ai.prop} with \eqref{ai.asym1}, \eqref{ai.asym2}, and Stirling's
formula yields the following:
\begin{corollary}\label{IK.cor}
For $\arg\nu\in [0,\tfrac{\pi}2]$, with $\lambda$, $\nu$ and $\psi = \psi(\nu,\lambda x)$ sufficiently large, we have
\begin{equation}\label{I.ntest}
I_{\nu}(\lambda x) = \frac{1}{\sqrt{2\pi}} (\nu^2+\lambda^2 x^2)^{-\frac14}  i^{-\nu}
e^{\psi} \left[1 + O(\psi^{-1}) + O(\lambda^{-1}) + O(\nu^{-1}) \right].
\end{equation}
Similarly, for $\arg (\nu - i\lambda x) \le \tfrac{\pi}2 - \vep$, corresponding to $\arg \zeta \in [0,\pi-\delta]$,
\begin{equation}\label{K.ntest}
K_{\nu}(\lambda x) =  \sqrt{\frac{\pi}2}
(\nu^2+\lambda^2 x^2)^{-\frac14} i^{\nu} e^{-\psi} 
\left[1 + O(\psi^{-1}) + O(\lambda^{-1}) \right].
\end{equation}
If $\arg (\nu - i\lambda x) \in [\tfrac{\pi}2 - \vep, \tfrac{\pi}2]$, then \eqref{K.ntest} holds with the
replacement
\[
e^{-\psi} \leadsto e^{-\psi} + ie^{\psi}.
\]  
(Under this condition, $\re \psi\le 0$, so the correction term is $O(1)$ and will not affect upper bounds for $K_\nu$.)
\end{corollary}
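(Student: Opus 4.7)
The plan is to substitute the representations from Proposition~\ref{IK.Ai.prop} directly and reduce everything to bookkeeping via Stirling's formula and the Airy asymptotics \eqref{ai.asym1}--\eqref{ai.asym2}. Two geometric identities drive the identification: $\psi(\nu,\lambda x) = \lambda\rho(\alpha,x)$ by definition, and $\zeta^{3/2} = \tfrac{3}{2}\rho$, so that $\tfrac23(\lambda^{2/3}\zeta)^{3/2} = \lambda\rho = \psi$ up to a branch choice. This is the mechanism that makes the Airy exponents collapse to $e^{\pm\psi}$.

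For \eqref{I.ntest}, I would combine the $(-i\lambda\alpha)^{\lambda\alpha}e^{-\lambda\alpha}$ factor in the prefactor $c_1(\lambda,\alpha)$ with Stirling's expansion $\Gamma(\lambda\alpha+1) = \sqrt{2\pi}\,(\lambda\alpha)^{\lambda\alpha+1/2} e^{-\lambda\alpha}(1+O(\nu^{-1}))$; this collapses to $(-i)^{\nu}/(\sqrt{2\pi}\,(\lambda\alpha)^{1/2})$ up to the Stirling error, supplying the phase $i^{-\nu}$ in \eqref{I.ntest}. For $\arg\alpha \in [0,\tfrac\pi2]$, the Airy argument $\arg(e^{-2\pi i/3}\zeta)$ lies in $(-\tfrac{2\pi}3,\tfrac{\pi}3)$, safely inside the sector where \eqref{ai.asym1} applies, so $\Ai(e^{-2\pi i/3}\lambda^{2/3}\zeta)$ expands as $\tfrac{1}{2\sqrt\pi}\,e^{\pi i/6}\lambda^{-1/6}\zeta^{-1/4}e^{\psi}(1+O(\psi^{-1}))$. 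Multiplying the pieces, the $\zeta^{\pm 1/4}$ factors cancel, the phases $e^{\mp\pi i/6}$ cancel, and converting the remaining prefactor via $(\nu^2+\lambda^2x^2)^{1/4} = \lambda^{1/2}(\alpha^2+x^2)^{1/4}$ reduces the $\lambda$-powers $\lambda^{1/6-1/6-1/2+1/2}$ to $1$ and yields \eqref{I.ntest}.

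The derivation of \eqref{K.ntest} follows the same template but without Stirling: one feeds $\Ai(\lambda^{2/3}\zeta)$ into \eqref{ai.asym1} whenever $\arg\zeta \in [0,\pi-\delta]$ and simplifies using $c_2(\lambda,\alpha) = \sqrt{2}\,\pi\lambda^{-1/3}i^\nu$. The condition $\arg(\nu - i\lambda x) \le \tfrac\pi2 - \vep$ controls $\arg\zeta$ via the local behavior of $\rho$ at the turning point $\alpha = ix$: since $\rho(\alpha,x)$ vanishes to order $\tfrac32$ in $\alpha - ix$, the variable $\zeta$ vanishes linearly, and a short Taylor expansion verifies that $\arg\zeta$ stays bounded away from $\pi$. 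When $\arg(\nu - i\lambda x)$ falls in the remaining sliver $[\tfrac\pi2-\vep,\tfrac\pi2]$, $\arg\zeta$ can approach or cross $\pi-\delta$, and one substitutes the two-term asymptotic \eqref{ai.asym2} in place of \eqref{ai.asym1}; the additional $ie^{\xi} = ie^{\psi}$ term produces exactly the stated replacement $e^{-\psi} \leadsto e^{-\psi} + ie^{\psi}$.

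The main obstacle is purely bookkeeping: half a dozen prefactors---fractional powers of $\lambda$, $\nu$, $\alpha$, $\zeta$, phases $e^{\pm\pi i/6}$ and $i^{\pm\nu}$, and constants involving $\sqrt\pi$---must collapse in perfect agreement with the clean formulas \eqref{I.ntest}--\eqref{K.ntest}, and the three independent error sources (Stirling's $O(\nu^{-1})$, the Airy $O(|\xi|^{-1})=O(\psi^{-1})$, and the $O(\lambda^{-1})$ from Proposition~\ref{IK.Ai.prop}) must be tracked and combined consistently. A subsidiary point of care is the sector analysis near the turning point just described, needed to justify the dichotomy between \eqref{ai.asym1} and \eqref{ai.asym2} and to confirm that $\re\psi \le 0$ in the sliver where the correction term appears, so that $ie^{\psi}$ does not spoil upper bounds for $K_\nu$.
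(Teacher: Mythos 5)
Your proposal is correct and follows exactly the route the paper intends: the paper states the corollary as an immediate consequence of Proposition~\ref{IK.Ai.prop}, \eqref{ai.asym1}, \eqref{ai.asym2}, and Stirling's formula, with no further proof, and your computation fills in precisely that bookkeeping. Your identification $\tfrac23(\lambda^{2/3}\zeta)^{3/2}=\lambda\rho=\psi$, the Stirling cancellation producing $i^{-\nu}/\sqrt{2\pi}$, the sector check for the Airy argument, and the dichotomy between \eqref{ai.asym1} and \eqref{ai.asym2} near $\arg\zeta=\pi$ all match what is needed, and the stated prefactors $c_1$, $c_2$ collapse as you describe.
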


These estimates don't apply near the `turning point' of the transformed Bessel equation, where $\nu = i\lambda x$ and
$\psi = 0$.  
\begin{corollary}\label{turn.cor}
For $\arg\nu\in [0,\tfrac{\pi}2]$, suppose $\nu$ is close to $i\lambda x$ in the sense that $\abs{\psi} < c$ with $c$ sufficiently small.  For $x>0$ fixed, $\lambda$ sufficiently large, we have
\begin{equation}\label{IK.turn}
\begin{split}
I_{\nu}(\lambda x) &\asymp (\lambda x)^{-\frac13} i^{-\nu},
\\
K_{\nu}(\lambda x) &\asymp (\lambda x)^{-\frac13} i^{\nu}.
\end{split}
\end{equation}
\end{corollary}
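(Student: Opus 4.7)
The plan is to start directly from the expressions for $I_{\lambda\alpha}(\lambda x)$ and $K_{\lambda\alpha}(\lambda x)$ given by Proposition~\ref{IK.Ai.prop}, and to argue that, in the regime $|\psi|<c$, every factor on the right-hand side contributes either a bounded nonzero constant, a Stirling factor, or the explicit $i^{\pm\nu}\lambda^{-1/3}$ that appears in the claim.

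The first step is to understand the behavior of $\zeta$ near the turning point $\alpha=ix$. Since $\rho'(\alpha)=\log\bigl(ix/(\alpha+\sqrt{\alpha^2+x^2})\bigr)$ vanishes at $\alpha=ix$ and $\rho''(\alpha)=-1/\sqrt{\alpha^2+x^2}$, a Taylor expansion about $\alpha=ix$ gives $\rho(\alpha)\sim -\tfrac23\sqrt{2/(ix)}\,(\alpha-ix)^{3/2}$. Hence $\zeta=(3\rho/2)^{2/3}$ is analytic in $\alpha$ near $ix$ with a simple zero, i.e.\ $\zeta\asymp (\alpha-ix)$, and consequently
\[
\left(\frac{\zeta}{\alpha^2+x^2}\right)^{\!1/4} = \left(\frac{\zeta}{(\alpha-ix)(\alpha+ix)}\right)^{\!1/4} \asymp 1
\]
for $\alpha$ in a small neighborhood of $ix$. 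The smallness condition $|\psi|<c$ translates, via $\psi=\lambda\rho$ and the expansion above, to $|\lambda^{2/3}\zeta|\asymp |\psi|^{2/3}<c^{2/3}$, which by choosing $c$ small enough puts us safely in the region $|w|\le 2.33$ where \eqref{airy.zero} gives $\Ai(w)\asymp 1$. Thus both $\Ai(\lambda^{2/3}\zeta)$ and $\Ai(e^{-2\pi i/3}\lambda^{2/3}\zeta)$ are bounded above and bounded away from zero.

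For $K_\nu(\lambda x)$ the proof now finishes immediately: the prefactor $\sqrt{2}\,\pi\,\lambda^{-1/3} i^{\lambda\alpha}$ together with the two factors just shown to be $\asymp 1$ gives $K_\nu(\lambda x)\asymp \lambda^{-1/3} i^{\nu}\asymp (\lambda x)^{-1/3} i^\nu$ for $x>0$ fixed. For $I_\nu(\lambda x)$, the remaining step is to simplify the prefactor
\[
\frac{2\pi^{1/2}}{\Gamma(\lambda\alpha+1)}(-i\lambda\alpha)^{\lambda\alpha}\alpha^{1/2}e^{-\pi i/6}\lambda^{1/6}e^{-\lambda\alpha}
\]
using Stirling's formula. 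Since $|\nu|=|\lambda\alpha|\asymp\lambda x$ is large, $\Gamma(\nu+1)=\sqrt{2\pi\nu}\,\nu^\nu e^{-\nu}(1+O(\nu^{-1}))$, so the $\nu^\nu e^{-\nu}$ cancels against $(-i\nu)^\nu e^{-\nu}$ (producing the factor $i^{-\nu}$), the ratio $\alpha^{1/2}/\nu^{1/2}=\lambda^{-1/2}$ absorbs into $\lambda^{1/6}$ to yield $\lambda^{-1/3}$, and the remaining $e^{-\pi i/6}$ is a bounded constant. Combining this with the $\asymp 1$ contributions from the Airy factor and the $\zeta/(\alpha^2+x^2)$ factor gives $I_\nu(\lambda x)\asymp \lambda^{-1/3} i^{-\nu}\asymp (\lambda x)^{-1/3} i^{-\nu}$.

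The only real point of care is the local analysis of $\zeta$ at the turning point, since the naive estimates $\zeta\to 0$ and $\alpha^2+x^2\to 0$ individually do not show that their ratio remains bounded. Once the expansion $\rho\sim C(\alpha-ix)^{3/2}$ is in hand, the rest is routine bookkeeping with Stirling and \eqref{airy.zero}.
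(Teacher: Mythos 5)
Your proof is correct and follows essentially the same route as the paper's: both start from Proposition~\ref{IK.Ai.prop}, derive the local expansion $\rho\asymp x^{-1/2}(\alpha-ix)^{3/2}$ near the turning point (so that $\zeta$ has a simple zero at $\alpha=ix$ and $\zeta/(\alpha^2+x^2)\asymp x^{-4/3}$), conclude that $|\psi|<c$ forces the Airy argument $\lambda^{2/3}\zeta$ into the zero-free disc where $\Ai\asymp 1$, and finish by applying Stirling's formula to the prefactor of $I_\nu$ (justified since $|\nu|\asymp\lambda$ there). The only cosmetic difference is that you spell out the Stirling cancellation in detail, which the paper leaves implicit; your phrase ``Taylor expansion'' for the $(\alpha-ix)^{3/2}$ behavior is slightly loose (since $\rho''$ is singular at the turning point, so one really integrates $\rho'\sim -\sqrt{2/(ix)}\sqrt{\alpha-ix}$), but the resulting expansion and its use are correct.
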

\begin{proof}
To estimate near the turning point, suppose that $\alpha = ix + \eta$, and $\nu = \lambda \alpha$ as above.  
For $\eta$ sufficiently small and $x>0$ we have
\[
\rho \asymp x^{-\frac12} \eta^{\frac32}.
\]
This means $\psi \asymp \lambda x^{-\frac12} \eta^{\frac32}$, so that $\abs{\psi} \le c$ corresponds to $\abs{\eta} \le c \lambda^{-\frac23} x^\frac13$.  

Consider the estimates of Proposition~\ref{IK.Ai.prop}.  Since $\zeta \asymp x^{-\frac13} \eta$, the assumption 
$\abs{\psi} \le c$ means that the argument of the Airy functions, $\lambda^\frac23 \zeta$, is bounded near $0$.
Note that $\Ai(0) \ne 0$, and that 
\[
\frac{\zeta}{\alpha^2+x^2} \asymp x^{-\frac43},
\]
for $\eta$ sufficiently small.  The estimate on $K_{\nu}(\lambda x)$ then follows immediately from  Proposition~\ref{IK.Ai.prop}.
For $I_{\nu}(\lambda x)$ we must also apply Stirling's formula.  This is justified for large $\lambda$ since $\abs{\psi}\le c$ implies $\abs{\nu} \asymp \lambda$.
\end{proof}

\bigbreak
In addition to the estimates given above for $I_\nu$, $K_\nu$ with $\re \nu \ge 0$, we will need to be able to control the ratio 
$I_{-\nu}/I_{\nu}$, which appears, for example, in the scattering matrix.  We can derive these from the results above using the identity 
\begin{equation}\label{I.reflect}
I_{-\nu}(z) = I_{\nu}(z) + \frac{2 \sin \pi\nu}{\pi} K_\nu(z).
\end{equation}
To analyze the ratio $I_{-\nu}/I_{\nu}$, we note that  using Proposition~\ref{IK.Ai.prop}, with Stirling's formula applied to $\Gamma(\nu+1)$ for large $\nu$, implies
\begin{equation}\label{KI.ratio}
\frac{2 \sin \pi\nu}{\pi} \frac{K_{\nu}(\lambda x)}{I_{\nu}(\lambda x)}  = e^{\frac{2\pi i}3} (1 - e^{2\pi i \nu}) 
\frac{\Ai((\frac32 \psi)^{2/3})}{\Ai((-\frac32 \psi)^{2/3})}
\bigl[1+ O(\lambda^{-1}) + O(\nu^{-1})\bigr],
\end{equation}
for $\lambda$ sufficiently large.
We first consider the estimates away from the zeros of $I_{-\nu}(\lambda x)$.

\begin{figure}
\begin{tabular}{cc}
\begin{overpic}[scale=.60]{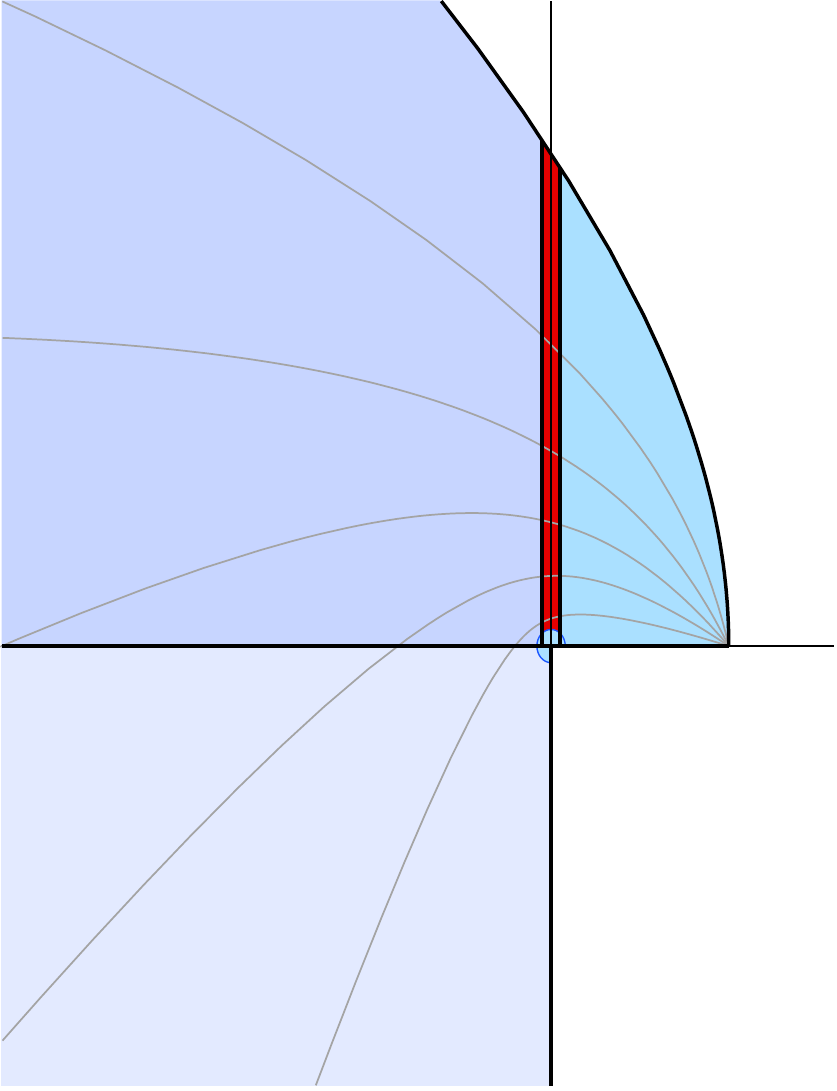}
\put(56,90){$\psi$}
\end{overpic} &\phantom{MMM}
\begin{overpic}[scale=.75]{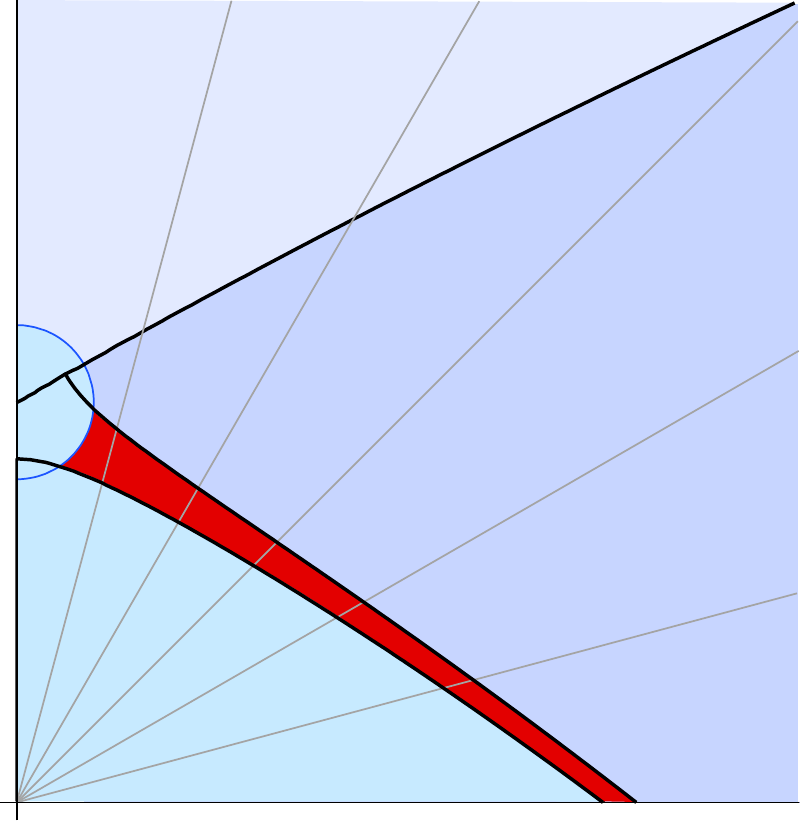}
\put(-7,50){$i\lambda x$}
\put(100,90){$\nu$}
\put(15,80){$\im \psi \le 0$}
\put(50,55){$\re \psi \le -b$}
\put(50,48){$\im \psi \ge 0$}
\put(15,15){$\re \psi \ge b$}
\end{overpic} 
\end{tabular}
\caption{Regions for the estimates in Lemmas~\ref{Iratio.lemma} and \ref{Iratio.z}.  The red zone contains the non-trivial zeros of $I_{-\nu}(\lambda x)$.}\label{NuPsiPlot.fig}
\end{figure}

\begin{lemma}\label{Iratio.lemma}
For the estimates below we assume that $\arg\nu\in [0,\tfrac{\pi}2]$ and $\lambda>M$, with $M$ large enough that the estimates from Proposition~\ref{IK.Ai.prop} apply, and that $x>0$.  There exist constants $\delta>0$ and $c>b>0$ such that:
\begin{enumerate}
\item  For either $\re \psi \ge b$ or $\abs{\psi} < c$,
\begin{equation}\label{II.asymp1}
\frac{I_{-\nu}(\lambda x)}{I_\nu(\lambda x)} \asymp 1,
\end{equation}
with constants that depend only on $M$, $b$, and $c$.
\item For $\im \psi \ge 0$, $\re \psi \le - b$ and (for the lower bound) $d(\nu, \bbN_0) \ge \delta$,
\begin{equation}\label{II.asymp2}
\frac{I_{-\nu}(\lambda x)}{I_\nu(\lambda x)} \asymp e^{-2\psi},
\end{equation}
with constants that depend only on $M$, $b$ and $\delta$.
\item For $\im \psi \le 0$ (which occurs only when $\re \psi \le 0$ also),
\begin{equation}\label{II.asymp3}
\frac{I_{-\nu}(\lambda x)}{I_\nu(\lambda x)} \asymp e^{-2\psi},
\end{equation}
with constants that depend only on $M$ and $x$.
\end{enumerate}
\end{lemma}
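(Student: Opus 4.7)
The plan is to use the reflection identity~\eqref{I.reflect} to write
\[
\frac{I_{-\nu}(\lambda x)}{I_{\nu}(\lambda x)} = 1 + R, \qquad R := \frac{2\sin\pi\nu}{\pi}\cdot\frac{K_\nu(\lambda x)}{I_\nu(\lambda x)},
\]
so it suffices to analyze $R$ in each regime. Combining Corollary~\ref{IK.cor} with the elementary identity $2\sin\pi\nu\cdot i^{2\nu} = i(1-e^{2i\pi\nu})$ gives, in the region where the simple form of \eqref{K.ntest} applies,
\[
R = i(1-e^{2i\pi\nu})\,e^{-2\psi}\bigl[1+O(\lambda^{-1})+O(\nu^{-1})+O(\psi^{-1})\bigr].
\]
Throughout, $\arg\nu\in[0,\tfrac{\pi}{2}]$ gives $|1-e^{2i\pi\nu}|\le 2$.

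\emph{Case (i).} When $\re\psi\ge b$, $|R|\le 3e^{-2b}$, so choosing $b$ large yields $|1+R|\asymp 1$. For the subcase $|\psi|<c$, I use \eqref{KI.ratio} directly: by \eqref{airy.zero} both Airy functions in the ratio stay bounded and bounded away from $0$, and the proof of Corollary~\ref{turn.cor} shows that $|\psi|<c$ forces $|\nu-i\lambda x|=O(\lambda^{1/3})$, so $\im\nu\ge \lambda x - O(\lambda^{1/3})$ is large and $|e^{2i\pi\nu}|$ is exponentially small in $\lambda$. Consequently $R$ is close to the constant $e^{2\pi i/3}$, and $|1+R|\asymp|1+e^{2\pi i/3}|=|e^{i\pi/3}|=1$ provided $c$ is small enough.

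\emph{Case (ii).} Here $|e^{-2\psi}|\ge e^{2b}$ dominates. The upper bound $|1+R|\le C|e^{-2\psi}|$ is immediate from $|1-e^{2i\pi\nu}|\le 2$. For the lower bound, $d(\nu,\bbN_0)\ge\delta$ implies $|1-e^{2i\pi\nu}|\ge c_\delta>0$ by splitting on whether $\im\nu$ exceeds a small $\delta$-dependent threshold: in the first case $|e^{2i\pi\nu}|$ is bounded below $1$ directly; in the second, $\re\nu$ is $\delta/2$-separated from the integers and the bound is immediate from periodicity in $\re\nu$. Hence $|R|\gtrsim|e^{-2\psi}|\gg 1$ for $b$ large, and $|1+R|\asymp|e^{-2\psi}|$.

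\emph{Case (iii).} This is the main obstacle. Set $\alpha=\nu/\lambda$. The key geometric fact is that the first-quadrant region $\{\im\rho(\alpha,x)\le 0\}$ is bounded away from the positive real axis of $\alpha$: a direct expansion gives $\im\rho>0$ on and near that axis, and the leading-order formula $\rho\sim -\alpha\log(2|\alpha|/x)$ for large $|\alpha|$ shows that $\im\alpha\gtrsim|\alpha|/\log|\alpha|$ in case~(iii), while compactness takes care of bounded $|\alpha|$. This forces $\im\alpha\ge c(x)>0$ throughout case~(iii), so for $\lambda>M$ with $M$ sufficiently large depending on $x$ one has $|e^{2i\pi\nu}|\le 1/2$ uniformly, hence $|1-e^{2i\pi\nu}|\ge 1/2$ without any hypothesis on $d(\nu,\bbN_0)$. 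In the subregion $\arg(\nu-i\lambda x)\in[\tfrac{\pi}{2}-\epsilon,\tfrac{\pi}{2}]$ (where $\arg\psi$ sits near $\tfrac{3\pi}{2}$ and $\re\psi\approx 0$), the corrected $K_\nu$ asymptotic from Corollary~\ref{IK.cor} applies, and carrying the correction $e^{-\psi}\leadsto e^{-\psi}+ie^\psi$ through the identity yields
\[
1+R = e^{2i\pi\nu} + i(1-e^{2i\pi\nu})\,e^{-2\psi}\bigl[1+o(1)\bigr].
\]
Since $|e^{-2\psi}|\ge 1$ and $|e^{2i\pi\nu}|\le 1/2$, the second term dominates and $|1+R|\asymp|e^{-2\psi}|$. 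Elsewhere in case~(iii), $\re\psi$ is significantly negative and the correction $ie^\psi$ is negligible relative to $e^{-\psi}$, so the simple form of $R$ already makes $|i(1-e^{2i\pi\nu})e^{-2\psi}|\gg 1$ and the same conclusion follows.
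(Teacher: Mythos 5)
Your treatment of cases (1) and (2) is essentially the paper's: both proceed from \eqref{I.reflect} and control the $K_\nu/I_\nu$ ratio via \eqref{KI.ratio} and the Airy asymptotics, using $\eqref{airy.zero}$ near the turning point and the separation $d(\nu,\bbN_0)\ge\delta$ to bound $\abs{1-e^{2\pi i\nu}}$ below.

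Case (3) is where you diverge from the paper, and where the gaps are. The paper applies the triangle identity \eqref{Ai.tri} \emph{inside} \eqref{KI.ratio}, producing \eqref{KI.atri}, whose leading term is exactly $-1$; adding $1$ therefore cancels to leave only an Airy ratio that \eqref{airy.sector} controls uniformly, including for $\psi$ of moderate size, and with an additive error that is $O(\lambda^{-1})$ (so the lower bound is automatic). Your route instead feeds the corrected $K_\nu$ expansion from Corollary~\ref{IK.cor} into $1+R$ and then compensates for the $e^{2\pi i\nu}$ term via the geometric claim $\im\alpha \ge c(x)$. That claim is true, but the justification you offer (a Taylor expansion near the real axis, a heuristic large-$\abs\alpha$ formula, and ``compactness'') is much shakier than what is needed; the cleanest route, which the paper actually uses, is the direct observation that $\im\psi \le 0$ forces $\im\nu\ge\lambda x$, hence $\im\alpha\ge x$. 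More substantively, your split of case (3) into ``corrected subregion'' and ``elsewhere'' leaves the near-turning-point part of case (3) uncovered: in ``elsewhere'' you assert $\re\psi$ is ``significantly negative,'' but when $\abs\psi < c$ one only has $\re\psi \in (-c,0]$; and Corollary~\ref{IK.cor}, which you rely on for the formula $R = i(1-e^{2\pi i\nu})e^{-2\psi}[1+\cdots]$, is only valid for $\psi$ large. This gap is fillable (by noting it is already covered by case (1), since there $e^{-2\psi}\asymp 1$), but you never say so, and the argument as written does not close. Finally, the lower bound in the corrected subregion needs $\abs{e^{2\pi i\nu}}$ small compared to the $O(\lambda^{-1})+O(\psi^{-1})$ error terms; your ``$\le 1/2$'' is not on its own strong enough for a clean lower bound near $\re\psi = 0$ -- the correct observation is that $\abs{e^{2\pi i\nu}}\le e^{-2\pi\lambda x}$ is exponentially small in $\lambda$.

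In short: cases (1) and (2) are fine and match the paper. Case (3) uses a different (and workable) decomposition, but as written it has a hole near the turning point and a too-weak bound on $e^{2\pi i\nu}$; the paper's use of the Airy triangle identity \eqref{Ai.tri} to produce \eqref{KI.atri} handles both points automatically and is the cleaner route.
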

\begin{proof}
(1)  We can apply the Airy estimate \eqref{airy.sector} to \eqref{KI.ratio} to obtain the bound, 
\begin{equation}
\frac{2 \sin \pi\nu}{\pi} \frac{K_{\nu}(\lambda x)}{I_{\nu}(\lambda x)} \preceq e^{-2\psi},
\end{equation}
for $\lambda \ge M$, $\im \nu \ge 0$, and $\abs{\nu} \ge \tfrac12$, with constants that depend on $M$ and $b$.  In this case, for $\re \psi \ge b$ with $b \gtrsim 0.4$, \eqref{II.asymp1} follows from \eqref{I.reflect}. 
For $\abs{\nu} < \tfrac12$, we can use the recurrence relation $I_{\nu}(z) = I_{\nu+2}(z) + 2z^{-1} (\nu+1) I_{\nu+1}(z)$ to move the estimates away from $\nu=0$ and obtain the same result.

Near the turning point, i.e.~with $\abs{\psi} < c$, we note that $\im \nu \asymp \lambda x$.  In this case, \eqref{II.asymp1} follows from \eqref{KI.ratio} and \eqref{airy.zero}, provided $c \lesssim 2.3$.

(2) 
For $d(\nu, \bbN_0) \ge \delta$ and $b > - (\log \sin \delta)/2$, the ratio \eqref{KI.ratio} dominates the $I_{-\nu}/I_\nu$ ratio and \eqref{II.asymp2} follows.

(3)  The assumption on $\psi$ corresponds to $\arg \zeta \in [\tfrac{2\pi}3, \pi]$, and also guarantees that $\im\nu \ge \lambda x$ (see Figure~\ref{NuPsiPlot.fig}).  By \eqref{KI.ratio} and \eqref{Ai.tri} this implies
\begin{equation}\label{KI.atri}
\frac{2\sin \pi\nu}{\pi} \frac{K_{\nu}(\lambda x)}{I_{\nu}(\lambda x)} = 
\left( -1 + e^{\frac{\pi i}3} \frac{\Ai(e^{\frac{2\pi i}3}(\frac32 \psi)^{2/3})}{\Ai((-\frac32 \psi)^{2/3})} \right)[1+O(\lambda^{-1})].
\end{equation}
Thus by \eqref{I.reflect} we have
\[
\frac{I_{-\nu}(\lambda x)}{I_\nu(\lambda x)} = e^{\frac{\pi i}3} \frac{\Ai(e^{\frac{2\pi i}3}(\frac32 \psi)^{2/3})}{\Ai((-\frac32 \psi)^{2/3})} [1+O(\lambda^{-1})] + O(\lambda^{-1}).
\]
The estimate \eqref{II.asymp3} now follows from \eqref{airy.sector}.
\end{proof}

\bigbreak
Lemma~\ref{Iratio.lemma} leaves out a region where $\abs\psi\ge c,\:\im \psi \ge 0$ and $\abs{\re \psi} \le b$, as illustrated in Figure~\ref{NuPsiPlot.fig}.  In this zone lower bounds are more delicate because it contains a non-trivial portion of the zero set
\[
\calZ_{\lambda x} := \left\{ \nu:\>  I_{-\nu}(\lambda x) = 0\right\}.
\]

\begin{lemma}\label{Iratio.z}
Assume that $\arg\nu\in [0,\tfrac{\pi}2]$ and $\lambda>M$, with $M$ large enough that the estimates from Proposition~\ref{IK.Ai.prop} apply, and that $\im \psi \ge 0$ and $\abs{\re \psi} \le b$.  Then
\[
\abs{\frac{I_{-\nu}(\lambda x)}{I_\nu(\lambda x)}} \le C_{M,b}.
\]
If in addition we assume that $d(\nu, \calZ_{\lambda x}) \ge \brak{\nu}^{-\beta}$ for some $\beta>0$, then
\[
\log \abs{\frac{I_{-\nu}(\lambda x)}{I_\nu(\lambda x)}} \ge -c_{M,b,\beta} \abs{\nu} \log \abs{\nu}.
\]
\end{lemma}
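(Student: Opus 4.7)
The plan is to prove the upper and lower bounds by rather different routes.

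For the upper bound, I would start from the reflection identity \eqref{I.reflect},
\[
\frac{I_{-\nu}(\lambda x)}{I_\nu(\lambda x)} = 1 + \frac{2\sin\pi\nu}{\pi}\frac{K_\nu(\lambda x)}{I_\nu(\lambda x)},
\]
and apply \eqref{KI.ratio} to the second term. In the region at hand, the conditions $\im\psi\ge 0$, $\abs{\re\psi}\le b$, and $\abs\psi\ge c$ force $\arg\psi$ to lie in a closed sector strictly inside $(0,\pi)$; consequently the arguments of both $(\tfrac32\psi)^{2/3}$ and $(-\tfrac32\psi)^{2/3}$ are bounded away from $\pm\pi$, and the Airy estimate \eqref{airy.sector} applies to numerator and denominator. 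The resulting ratio of Airy factors is $\asymp e^{-2\re\psi}$, hence bounded in modulus by $e^{2b}$, and $\abs{1-e^{2\pi i\nu}}\le 2$ since $\im\nu\ge 0$. Together these give the desired $\abs{I_{-\nu}/I_\nu}\le C_{M,b}$.

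The lower bound is more delicate because the zeros of $F(\nu):=I_{-\nu}(\lambda x)/I_\nu(\lambda x)$ lie inside the region of consideration. The plan is to apply a Cartan-type minimum modulus lemma to $F$ --- which is holomorphic in the half-plane $\{\re\nu\ge 0\}$ since $I_\nu(\lambda x)\ne 0$ there --- on a disk $D:=D(\nu,R)$ with $R$ comparable to $\abs{\nu}/\log\abs{\nu}$. This radius is calibrated to two constraints: small enough that $\psi$ varies by only $O(\abs\nu)$ across $D$ (using $\partial_\nu\psi \sim \log\nu$), so that Lemma~\ref{Iratio.lemma}(2)--(3) yields $\log\sup_D\abs{F}\le C\abs\nu$; and large enough to reach a reference point $\nu_1\in D$ with $\re\psi(\nu_1)\ge b$, at which Lemma~\ref{Iratio.lemma}(1) gives $\abs{F(\nu_1)}\asymp 1$. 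Jensen's formula applied to $F/F(\nu_1)$ on $D$ then yields a zero count $n\le C\abs\nu$. Cartan's lemma, combined with the distance hypothesis $d(\nu,\calZ_{\lambda x})\ge \brak\nu^{-\beta}$, now produces
\[
\log\abs{F(\nu)} \ge \log\abs{F(\nu_1)} - C\log\sup_D\abs{F} - n\log\bigl(R\brak\nu^{\beta}\bigr) \ge -c_{M,b,\beta}\abs\nu\log\abs\nu,
\]
since $\log\sup_D\abs{F}\le C\abs\nu$ and $n\log(R\brak\nu^{\beta})\le C\abs\nu\log\abs\nu$.

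The principal obstacle is calibrating $R$: too large a disk brings in regions where $\abs F$ can grow like $e^{\abs\nu\log\abs\nu}$, which would inflate both $\log\sup_D\abs{F}$ and the zero count, while too small a disk fails to contain the reference point $\nu_1$ in the good region. A secondary point is verifying that $\nu$ itself lies outside the exceptional set in Cartan's theorem --- i.e.\ that the total radius $n\brak\nu^{-\beta}$ of the excluded disks remains compatible with the distance hypothesis.
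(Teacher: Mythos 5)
Your upper bound argument is sound and agrees in spirit with the paper (the paper obtains it from boundary control via Lemma~\ref{Iratio.lemma} plus the maximum principle; your direct Airy estimate in the interior of the region works just as well). The lower bound, however, has two genuine problems.

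First, the calibration of the radius rests on the claim that $\partial_\nu\psi\sim\log\nu$, which is incorrect in the region of interest. Differentiating \eqref{psi.def} gives
\[
\partial_\nu\psi(\nu,\lambda x)=\log\Bigl(\frac{i\lambda x}{\nu+\sqrt{\nu^2+\lambda^2x^2}}\Bigr),
\]
and on the red zone one has $|\nu|\asymp\lambda$, so this quantity is $O(1)$; in fact it tends to $0$ as $\nu\to i\lambda x$. So on your disk of radius $R\asymp|\nu|/\log|\nu|$ the oscillation of $\psi$ is $O(R)=O(|\nu|/\log|\nu|)$, not $O(|\nu|)$, and the zero count and sup estimates all shift accordingly. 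The bookkeeping could perhaps be salvaged, but not as written.

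Second and more seriously, $F(\nu)=I_{-\nu}(\lambda x)/I_\nu(\lambda x)$ is meromorphic, with poles at the zeros of $I_\nu(\lambda x)$, which lie in $\{\re\nu<0\}$. The red zone reaches all the way to the turning point $\nu=i\lambda x$, where $\re\nu$ is as small as $O(\lambda^{1/3})$, far smaller than $R\sim|\nu|/\log|\nu|$. A disk $D(\nu_1,2eR)$ centered near such a point inevitably crosses into $\{\re\nu<0\}$ and picks up poles, so the Cartan/Jensen machinery cannot be applied to $F$ on that disk; one would have to first clear the poles, i.e.\ pass back to the entire function $\nu\mapsto I_{-\nu}(\lambda x)$. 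The paper sidesteps both issues by applying Levin's minimum modulus theorem once, to the \emph{entire} function $f(\nu)=I_\nu(\lambda x)/I_0(\lambda x)$ on the single large disk $|\nu|\le m\lambda$, where $\log\sup|f|\le C_m\lambda$ follows directly from the Bessel asymptotics; the extra factor $\log|\nu|$ in the final bound then arises purely from taking the exceptional-set parameter $\eta\sim\kappa\lambda^{-\beta-1}$, which forces the excluded disks to have total radius below $\brak{\nu}^{-\beta}$ and costs $\log\eta^{-1}\sim(\beta+1)\log\lambda$. I would encourage you to adopt that global normalization rather than trying to run a local Cartan argument on the ratio.
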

\begin{proof}
By the estimates in Lemma~\ref{Iratio.lemma}, we can see that for $\lambda \ge M$ with $M$ sufficiently large,
\[
\abs{\frac{I_{-\nu}(\lambda x)}{I_\nu(\lambda x)}} \preceq 1,
\]
for $\nu$ on the boundary of the region in question, with constants that depend only on $M$ and $b$.  The upper bound follows immediately.

For the lower bound we apply the minimum modulus theorem in the form \cite[Thm~1.11]{Levin} to $f(\nu) := I_\nu(\lambda x)/I_0(\lambda x)$ (normalized so $f(0) = 1$).  For $\eta>0$ sufficiently small and $m>0$ fixed,  inside the disk $\abs{\nu} \le m \lambda$, but excluding a set of disks whose radii sum to at most $4m\eta \lambda$, we have
\begin{equation}\label{levin.est}
\log \abs{\frac{I_\nu(\lambda x)}{I_0(\lambda x)}} > - \left(3 + \log \frac{3}{2\eta}\right) 
\log \left(\sup_{\abs{z} = 2me\lambda} \abs{\frac{I_z(\lambda x)}{I_0(\lambda x)}} \right).
\end{equation}
Since
\[
\abs{\re \psi(\nu,\lambda x)} = O(\lambda), \quad\text{for }\abs{\nu} \le C\lambda,
\]
we can apply Corollary \ref{IK.cor} (or Corollary \ref{turn.cor} in case $2me$ is close to $1$) and \eqref{I.reflect} to deduce that for any $m>0$, 
\[
\log \abs{I_\nu(\lambda x)} \le C_m \lambda,\quad\text{for }\abs{\nu} \le 2me \lambda,
\]
for $\lambda$ sufficiently large.  For the $I_0$ term the standard Bessel function asymptotic gives $I_0(\lambda x) \sim (2\pi \lambda x)^{-1/2} e^{\lambda x}$.  Combining these estimates with \eqref{levin.est} thus gives a lower bound
\begin{equation}\label{Inu.lower}
\log \abs{I_\nu(\lambda x)} > - c_m (1 + \log \eta^{-1}) \lambda,
\end{equation}
for $\abs{\nu} \le m \lambda$, excluding a set of disks whose radii sum to at most $4m\eta \lambda$.

Now we wish to apply the estimate to the region described in the lemma, in which $\abs{\nu} \asymp \lambda$ and $d(\nu, \calZ_{\lambda x}) \ge \brak{\nu}^{-\beta}$.  We can fix $m$ independently of $\lambda$ and choose $\eta = \kappa\lambda^{-\beta-1}$.  For $\kappa$ sufficiently small, the hypotheses of \eqref{Inu.lower} will be satisfied for all $\nu, \lambda$ in the region of interest.  For $\lambda$ sufficiently large, the claimed lower bound then follows from \eqref{Inu.lower}, with the extra $\log \abs{\nu}$ coming from the variable choice of $\eta$.
\end{proof}

\subsection{Spectral operator estimates}

We can now apply the estimates from \S\ref{bessel.sec} to the formulas for the model resolvent, Poisson operator, and scattering matrix from \S\ref{sop.sec}.  For the resolvent, we only need estimates in the physical half-plane, $\re s\ge \nh$.

\begin{prop}\label{R0est.prop}
Suppose $\chi_1,\chi_2 \in \cinf_0(0,1)$ are cutoff functions with disjoint
supports and $\sigma \ge 0$.  Then for $\re(s-\nh) \ge \vep$, we have
\[
\norm{\chi_1 R_0(s) \chi_2}_{\mathcal{L}(H^0,H^\sigma)} \le
C_{\vep,\sigma} \brak{s}^{-1+\sigma}.
\]
For $0 \le \re(s-\nh) \le \vep$, with $\abs{s - \nh} \ge \vep$, we have
\[
\norm{\chi_1 R_0(s) \chi_2}_{\mathcal{L}(H^0,H^\sigma)} \le
C_{\vep,\sigma}\brak{s}^{-\frac23+\sigma}.
\]
\end{prop}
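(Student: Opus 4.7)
My plan is to estimate $\chi_1 R_0(s) \chi_2$ mode-by-mode using the Fourier decomposition over $\sigma(\Delta_h)$, and then apply the Bessel function estimates from \S\ref{bessel.sec}. Assume without loss of generality that $\sup\text{supp}\,\chi_1 < \inf\text{supp}\,\chi_2$, so we are in the off-diagonal regime $x<x'$. Using the reflection identity \eqref{I.reflect} together with $\Gamma(\nu)\Gamma(1-\nu) = \pi/\sin\pi\nu$ to simplify $u_\lambda^0$, the mode-$\lambda$ piece of the kernel becomes
\[
a_\lambda(s;x,x') = x^{n/2}{x'}^{n/2}\, I_\nu(\lambda x) \left[K_\nu(\lambda x') - \frac{K_\nu(\lambda)}{I_\nu(\lambda)} I_\nu(\lambda x')\right],
\]
the sum of the free-resolvent kernel on $(0,\infty)\times\Sigma$ and a boundary correction enforcing the Dirichlet condition at $x=1$. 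Orthogonality of the $\phi_\lambda$ reduces the $\mathcal L(H^0,H^\sigma)$ norm to $\sup_\lambda \brak\lambda^\sigma$ times the rank-one norm per mode, and $x$-differentiation (needed for $H^\sigma_x$) produces at most comparable factors via the standard Bessel recurrences.

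The core is a case analysis on $(\lambda,\nu)$. For $\lambda$ bounded, the series bound \eqref{Iseries.bnd} and Stirling give $|I_\nu(\lambda x)/I_\nu(\lambda)| \lesssim x^{\re\nu}$ and super-polynomial decay in $|s|$ from the remainder. For $\lambda$ large with $|\nu|$ bounded, the classical asymptotics give $I_\nu(\lambda x) K_\nu(\lambda x') \sim (2\lambda)^{-1} (xx')^{-1/2} e^{-\lambda(x'-x)}$, exponentially decaying in $\lambda$. For both $\lambda, |\nu|$ large and $\nu$ away from the turning point $i\lambda y$ for $y$ in either support, Corollary \ref{IK.cor} combined with the identity $\psi(\nu,\lambda x')-\psi(\nu,\lambda x) = \int_x^{x'}\sqrt{\nu^2+\lambda^2 y^2}/y\,dy$ yields
\[
|I_\nu(\lambda x) K_\nu(\lambda x')| \lesssim \frac{\exp\!\bigl(-\re\!\int_x^{x'}\!\sqrt{\nu^2+\lambda^2 y^2}/y\,dy\bigr)}{(\nu^2+\lambda^2 x^2)^{1/4}(\nu^2+\lambda^2 x'^2)^{1/4}},
\]
after the $|i^{\mp\nu}|$ factors cancel. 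For $\re\nu\ge\vep$ the integrand's real part is $\gtrsim \vep+\lambda y$, so the exponential dominates $\brak\lambda^\sigma$ for all $\sigma$; when $\re\nu$ is near zero but $\nu$ is away from the turning points, the exponent is $O(1)$ but the prefactor alone gives $\brak s^{-1}$, and $\brak\lambda^\sigma\le\brak s^\sigma$ (enforced by exponential decay whenever $\lambda\gg|s|$) completes the $\brak s^{-1+\sigma}$ bound. The boundary correction $K_\nu(\lambda)/I_\nu(\lambda)$ is bounded via \eqref{KI.ratio} and Lemma \ref{Iratio.lemma} and contributes at the same or smaller order.

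The main obstacle is the turning-point zone $|\im\nu|\asymp\lambda y$ for $y$ in a support, which only arises for $0\le\re\nu\le\vep$ (since $\nu\approx i\lambda y$ forces $\re\nu\approx 0$) and is responsible for the weaker bound there. By Corollary \ref{turn.cor}, one of $I_\nu(\lambda x)$ or $K_\nu(\lambda x')$ degrades from the generic $\brak s^{-1/2}$ prefactor to $\brak s^{-1/3}$; however, because $\text{supp}\,\chi_1$ and $\text{supp}\,\chi_2$ are disjoint, the two turning-point conditions $|\im\nu|\asymp\lambda x$ and $|\im\nu|\asymp\lambda x'$ involve disjoint ranges of $\lambda$ for fixed $\nu$, so at most one factor suffers the $\brak s^{1/3}$ loss at a time. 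This yields the $\brak s^{-2/3+\sigma}$ bound in the strip. In the physical half-plane $\re\nu\ge\vep$ the turning-point condition cannot be satisfied uniformly, so the stronger $\brak s^{-1+\sigma}$ bound holds. Finally, the hypothesis $|s-\tfrac n2|\ge\vep$ is used to exclude the $\nu=0$ singularity of the $\lambda=0$ mode ($u_0^0=(2\nu)^{-1}(x^{n-s}-x^s)$), which is handled separately by an elementary estimate.
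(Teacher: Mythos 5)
Your overall strategy for the strip estimate ($0\le\re(s-\nh)\le\vep$) matches the paper's: decompose into $\Delta_h$-modes, pass to the $K_\nu$ form of the kernel via \eqref{I.reflect}, and control each coefficient $a_\lambda$ with the uniform Airy-type Bessel estimates, distinguishing the turning-point zone from the rest. Your observation that the disjointness of $\supp\chi_1$ and $\supp\chi_2$ forces the turning-point conditions for the two arguments to occur at disjoint ranges of $\lambda$ is also used implicitly by the paper.

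However, there is a genuine gap in your treatment of the first (half-plane) bound. You claim that for $\re\nu\ge\vep$ ``the turning-point condition cannot be satisfied uniformly,'' but the turning-point region $\abs{\nu-i\lambda x}\lesssim\lambda^{1/3}$ contains points with $\re\nu=\vep$ as soon as $\lambda\gtrsim\vep^3$, so the turning point is not excluded. You then assert that the exponential factor $\exp\big(-\re\int_x^{x'}\sqrt{\nu^2+\lambda^2y^2}\,dy/y\big)$ decays like $e^{-c(\vep+\lambda y)}$, but this is not so near a turning point: if $\nu\approx i\lambda y_0$ with $y_0\in\supp\chi_2$, then for $y<y_0$ one has $\nu^2+\lambda^2 y^2\approx -\lambda^2(y_0^2-y^2)+2i\vep\lambda y_0$, whose square root has real part of size $O(\vep)$ only, so the integral contributes a bounded factor, not exponential decay in $\lambda$. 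With only the Airy-type pointwise bounds and the monotonicity of $\re\psi$ in $x$, the kernel estimate near this turning point degrades to $O(\brak{s}^{-5/6})$ (or the paper's cruder $O(\brak{s}^{-2/3})$), which is not enough for the claimed $\brak{s}^{-1}$. The paper sidesteps this entirely: for $\re(s-\nh)\ge\vep$ it uses no Bessel asymptotics at all, just the spectral theorem and the elementary computation of $d(s(n-s),\sigma(\Delta_0))$ with $\sigma(\Delta_0)=[\tfrac{n^2}4,\infty)$, which yields $\norm{R_0(s)}\le C_\vep\brak{s}^{-1}$ directly (and disjointness of the cutoffs plays no role there). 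You should adopt that route for the first bound, reserving the Bessel analysis for the strip.

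A secondary difference: the paper handles $\sigma>0$ by invoking a standard resolvent-identity argument reducing to $\sigma=0$, whereas you propose to differentiate the kernel directly via Bessel recurrences and track the $\brak\lambda^\sigma$ factor through the mode decomposition. Your route could work, but the recurrences produce factors of $\lambda$ and $\nu/x$ rather than just $\brak\lambda$, so some additional care is needed to verify that no extra $\brak{s}$-powers appear; the resolvent-identity reduction avoids this bookkeeping.
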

\begin{proof}
By a standard argument involving resolvent identities, it suffices to prove the
estimates for $\sigma=0$
(see, e.g.~\cite[Lemma~9.8]{Borthwick}).

The first bound depends only on the location of the spectrum, 
$\sigma(\Delta_{0}) = [\tfrac{n^2}4, \infty)$.
From the spectral theorem and the fact that
\[
d(s(n-s), \sigma(\Delta_0)) = \begin{cases}\abs{s-\nh}^2 & \re (s-\nh) \ge
\abs{\im s} \\
2\abs{\re(s-\nh) \im s} & \re (s-\nh) \le \abs{\im s},  \end{cases}
\]
we find that
\[
\norm{R_0(s)} \le C_\vep \brak{s}^{-1},
\]
for $\re s \ge \nh + \vep$.

For the bound near the critical line we turn to the decomposition
\eqref{R0.decomp}.  Since the cutoffs yield a smoothing operator with compactly
supported coefficients, it suffices to obtain pointwise estimates of the
coefficients $a_\lambda$.  For $x_1< x_2$ we have
\begin{equation}\label{asig.I}
a_\lambda(s;x_1,x_2) = 2^{-1} \Gamma(\nu) \Gamma(1-\nu) (x_1 x_2)^{\frac{n}2}
I_\nu(\lambda x_1)
\left[ I_{-\nu} (\lambda x_2) - \frac{I_{-\nu} (\lambda)}{I_{\nu} (\lambda)} I_{\nu} (\lambda x_2)  \right],
\end{equation}
or, using \eqref{I.reflect},
\begin{equation}\label{asig.K}
a_\lambda(s;x_1,x_2) =  (x_1 x_2)^{\frac{n}2} I_\nu(\lambda x_1)
\left[ K_{\nu} (\lambda x_2) - \frac{K_{\nu} (\lambda)}{I_{\nu} (\lambda)} I_{\nu} (\lambda x_2)\right].
\end{equation}

The case where $\lambda$ is bounded is easily dealt with. For $\abs{\re \nu} \le \vep$ we can apply \eqref{Iseries.bnd}
directly in \eqref{asig.I} to obtain
\[
a_\lambda(s;x_1,x_2) = O(\brak{\im s}^{-1}),
\]
for $0\le \lambda \le M$.

For the rest of the proof we may assume that $\lambda \ge M$ such
that the estimates of Proposition~\ref{IK.Ai.prop} apply.
First we consider the case away from the turning
point.  That is, we assume 
$\abs{\nu-i \lambda x} \ge c \lambda^{\frac13}$ for all of $x = 1,\, x_1$ or $x_2$.  Then
 \eqref{I.ntest} and \eqref{K.ntest} directly in \eqref{asig.K}, giving the
estimate
\begin{equation}\label{asig.caseb}
\begin{split}
\abs{a_\lambda(s;x_1,x_2)} & \le C \abs{\nu^2+(\lambda x_1)^2}^{-\frac14}
\abs{\nu^2+(\lambda x_2)^2}^{-\frac14} \\
&\qquad \times\left(e^{\re[\psi(\nu,\lambda x_1) - \psi(\nu,\lambda x_2)]}
+ e^{\re[\psi(\nu,\lambda x_1) + \psi(\nu,\lambda x_2) - 2\psi(\nu,\lambda)]} \right).
\end{split}
\end{equation}
Since
\[
\del_x \psi(\nu,\lambda x) = \frac{\sqrt{\nu^2+\lambda^2x^2}}{x},
\]
we observe that $\re \psi$ is an increasing function of $x$ for $\re \nu\ge 0$. 
Thus the final expression in \eqref{asig.caseb} is $O(1)$.  Under these assumptions we conclude that 
\[
\abs{a_\lambda(s;x_1,x_2)} = O(\brak{\im s}^{-1}),
\]
uniformly in $\lambda$.

If $\nu$ is near the turning point with respect to any of $x=1$, $x_1$, or $x_2$, then we use the corresponding 
estimates from \ref{IK.turn} for those terms.  In the worst case, we pick up an extra factor of $\lambda^\frac13$ from the turning point estimates.   Since $\abs{\nu} \asymp \lambda$ near the turning points, the resulting estimate is $O(\abs{\nu}^{-\frac23})$.
\end{proof}

\bigbreak
We turn next to estimates of the Poisson operator, which is quite straightforward in the physical half-plane.
\begin{prop}\label{E0.prop}
For $\chi \in \cinf_0(0,1)$ and $\re s \ge \nh$,
\[
\mu_k(\chi E_0(s)) \le C e^{c_1 \brak{s} - c_2 k^{1/n}}.
\]
The same estimate holds if $\chi$ is replaced by a radial differential operator with coefficients in $\cinf_0(0,1)$.
\end{prop}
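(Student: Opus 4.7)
The approach is to diagonalize $\chi E_0(s)$ using the orthonormal eigenbasis $\{\phi_\lambda\}$ of $\Delta_h$ and reduce the singular-value bound to a pointwise estimate on the radial coefficient. From \eqref{bl.def} we have $\chi E_0(s)\phi_\lambda = \chi(x) b_\lambda(s;x)\phi_\lambda(\omega)$, which directly gives a singular value decomposition: the nonzero singular values of $\chi E_0(s)\colon L^2(\Sigma)\to L^2(X_0)$ are the numbers $\|\chi(x) b_\lambda(s;x)\|_{L^2((0,1],\,x^{-n-1}dx)}$, each repeated with the multiplicity of $\lambda^2$ in $\sigma(\Delta_h)$. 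Weyl's law on $(\Sigma,h)$ gives $\lambda_k\gtrsim k^{1/n}$ for the $k$-th mode, so it suffices to prove the pointwise estimate
\[
|b_\lambda(s;x)| \le C\, e^{c_1\brak{s}-c_2\lambda}
\]
uniformly for $x\in\mathrm{supp}(\chi)$, $\re\nu\ge 0$, and $\lambda\ge 0$, where $\nu:=s-\tfrac{n}{2}$.

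To establish this pointwise bound I would first combine \eqref{bl.def} with \eqref{I.reflect} and the reflection formula $\Gamma(\nu)\Gamma(1-\nu)=\pi/\sin(\pi\nu)$ to rewrite
\[
b_\lambda(s;x) = \frac{(\lambda/2)^\nu}{\Gamma(\nu+1)}\, x^{n/2}\Bigl[K_\nu(\lambda x)-\frac{K_\nu(\lambda)}{I_\nu(\lambda)}\,I_\nu(\lambda x)\Bigr].
\]
For bounded $\lambda\le M$, Stirling's formula applied to $\Gamma(\nu+1)$ controls the prefactor by $C e^{c_1\brak{s}}$ when $\re\nu\ge 0$, while the bracket is handled by the series \eqref{Iseries.bnd} together with the standard small-argument expansion of $K_\nu$. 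For $\lambda\ge M$, Corollary~\ref{IK.cor} outside the turning-point zone, together with Corollary~\ref{turn.cor} inside it, produces factors of order $e^{-\re\psi(\nu,\lambda x)}$ on $K_\nu(\lambda x)$ and $e^{\re[\psi(\nu,\lambda x)-2\psi(\nu,\lambda)]}$ on the second term. The crucial monotonicity is that $\partial_x\psi(\nu,\lambda x)=\sqrt{\nu^2+\lambda^2 x^2}/x$ has non-negative real part whenever $\re\nu\ge 0$, so $\re\psi(\nu,\lambda t)$ is non-decreasing in $t$. Since $\mathrm{supp}(\chi)\subset[x_0,x_1]$ with $x_1<1$, this monotonicity combined with the large-argument asymptotic $\psi(\nu,y)\sim y$ yields $\re\psi(\nu,\lambda)-\re\psi(\nu,\lambda x)\ge c_2\lambda$, which after absorbing the Stirling estimate of $(\lambda/2)^\nu/\Gamma(\nu+1)$ into the $e^{c_1\brak{s}}$ growth produces the required decay.

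The extension to a radial differential operator $P$ with coefficients in $C^\infty_0(0,1)$ replacing $\chi$ follows from the same decomposition, since $PE_0(s)\phi_\lambda=(Pb_\lambda)(x)\phi_\lambda(\omega)$; differentiating $b_\lambda$ via the Bessel recurrence $2I'_\nu(z)=I_{\nu-1}(z)+I_{\nu+1}(z)$ introduces only polynomial factors in $\lambda$ and $\nu$ that are absorbed into the dominant exponential decay and the $e^{c_1\brak{s}}$ growth. The hardest part of the argument is the turning-point regime $\nu\approx i\lambda x$ with $x\in\mathrm{supp}(\chi)$, where the individual Bessel functions are of order $\lambda^{-1/3}$ rather than decaying exponentially; there the claimed decay must be extracted from the gap $1-x_1>0$ between $\mathrm{supp}(\chi)$ and the boundary, handled through Corollary~\ref{turn.cor} and direct evaluation of $\psi$ at $x=1$ in the relevant $\lambda^{-2/3}$-shell of $\nu$ values.
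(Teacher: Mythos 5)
Your overall strategy is the same as the paper's---diagonalize in the $\phi_\lambda$ basis, reduce to a pointwise estimate on $b_\lambda(s;x)$ using \eqref{bl.IK}, and invoke the Weyl law to convert decay in $\lambda$ into decay in $k$---but the central decay step is argued incorrectly, and the proof as written would not close.

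There are two interlocking problems. First, the Stirling prefactor $\left|(\lambda/2)^\nu/\Gamma(\nu+1)\right|$ is \emph{not} $O(e^{c_1\brak{s}})$ uniformly in $\lambda$: by Stirling it behaves like $\exp\left(\re\nu\cdot\log\tfrac{e\lambda}{2|\nu|}+O(\log\brak{s})\right)$, which for $\lambda\gg|\nu|$ grows faster than any fixed exponential in $\brak{s}$. It therefore cannot be ``absorbed into the $e^{c_1\brak{s}}$ growth''; it must be combined with the Bessel factor. Second, the claimed gap bound $\re\psi(\nu,\lambda)-\re\psi(\nu,\lambda x)\ge c_2\lambda$ is false (take $\nu$ nearly purely imaginary with $|\nu|>\lambda$: both $\re\psi(\nu,\lambda)$ and $\re\psi(\nu,\lambda x)$ can be arbitrarily close to $0$), and even where it does hold it points the wrong way for your purpose. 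Monotonicity gives $\re\psi(\nu,\lambda x)\le\re\psi(\nu,\lambda)$, so $e^{-\re\psi(\nu,\lambda x)}\ge e^{-\re\psi(\nu,\lambda)}$; the gap makes the dominant factor \emph{larger}, not smaller. Its actual role (as in the paper) is only to show the second bracket term $\tfrac{K_\nu(\lambda)}{I_\nu(\lambda)}I_\nu(\lambda x)\approx e^{\re\psi(\nu,\lambda x)-2\re\psi(\nu,\lambda)}$ is no bigger than the first term $K_\nu(\lambda x)\approx e^{-\re\psi(\nu,\lambda x)}$, so one can drop it. The decay in $\lambda$ has to be extracted from the combined quantity $\log\left|(\lambda/2)^\nu/\Gamma(\nu+1)\right|-\re\psi(\nu,\lambda x)=\re\left[-\nu\log\tfrac{2x\nu}{\nu+\sqrt{\nu^2+\lambda^2x^2}}+\nu-\sqrt{\nu^2+\lambda^2x^2}\right]+O(1)$, which is $\le-\lambda x+O(|\nu|\log\tfrac{\lambda x}{|\nu|})$ for $\lambda x\gg|\nu|$, hence $\le -c\lambda$ once $\lambda\ge m|\nu|$ with $m$ large. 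For $\lambda\lesssim|\nu|$ the same formula gives only $\log|b_\lambda|=O(\brak{s})$, which is fine because there $\lambda\lesssim\brak{s}$ so the $-c_2\lambda$ in the target bound is absorbed into $c_1\brak{s}$ by choosing $c_1$ large.

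This last point also defuses your concern about the turning-point regime. There $\nu\approx i\lambda x$, so $\lambda\asymp|\nu|\asymp\brak{s}$, and an $O(e^{C\brak{s}})$ bound on $b_\lambda$ is already sufficient; no exponential decay in $\lambda$ is needed or available there, and no special cleverness with the gap $1-x_1>0$ is required. Your treatment of the radial-derivative extension via Bessel recurrences is fine, though the paper uses the $\lambda x$-derivative recurrences $\del_x I_\nu(\lambda x)=\lambda I_{\nu+1}(\lambda x)+\tfrac{\nu}{x}I_\nu(\lambda x)$ rather than the order recurrence; either works.
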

\begin{proof}
Since the cutoff depends only on $x$, the operator $(\chi E_0(s))^* \chi E_0(s)$
is diagonal with respect to the eigenfunctions 
$\phi_\lambda$, with eigenvalues given by
\begin{equation}\label{EE.eig}
\int \abs{\chi(x) b_\lambda(s;x)}^2 \>\frac{dx}{x^{n+1}},
\end{equation}
for $\lambda^2 \in \sigma(\Delta_h)$.
Up to a possible change of ordering, these values correspond with the set of
values of $\mu_k(\chi E_0(s))^2$.  

To analyze the asymptotics, we set $\nu = s - \nh$ and use the conjugation symmetry to restrict our attention to $\im \nu \ge 0$.
From \eqref{bl.def} we have the explicit formula,
\begin{equation}\label{bl.II}
b_\lambda(s;x) = \frac{1}{2\nu} \left(\frac{\lambda}{2}\right)^{\nu} \Gamma(1-\nu) x^\frac{n}2
\left[I_{-\nu}(\lambda x) - \frac{I_{-\nu}(\lambda)}{I_{\nu}(\lambda)}  I_{\nu}(\lambda x)\right],
\end{equation}
for $\lambda>0$.  For $\lambda \le M$, we can deduce from \eqref{Iseries.bnd} that
\begin{equation}\label{bl.bd}
b_\lambda(s;x) \asymp \frac{1}{2\nu} \left[ x^{n-s} - x^s\right],
\end{equation}
uniformly for $x \in \supp \chi$.  For $\lambda = 0$ this formula is exact by \eqref{bl.def}.
Hence for $\re s\ge 0$ and $0\le \lambda \le M$, we have $b_\lambda(s;x) = O(1)$.  

Now assume $\lambda> M$ with $M$ large enough that Proposition~\ref{IK.Ai.prop} applies.  Using \eqref{I.reflect} we can write 
\begin{equation}\label{bl.IK}
b_\lambda(s;x) = \frac{ (\lambda/2)^\nu}{\Gamma(\nu+1)} x^\frac{n}2 \left[ K_{\nu} (\lambda x)
- \frac{K_\nu(\lambda)}{I_\nu(\lambda)} I_\nu(\lambda x) \right].
\end{equation}
Assuming $M$ is sufficiently large, Proposition~\ref{IK.Ai.prop} (along with Corollary \ref{turn.cor} if either $\psi(\nu,\lambda x)$ or $\psi(\nu,\lambda)$ is close to zero) shows that the $K_{\nu} (\lambda x)$ term dominates in 
\eqref{bl.IK}.  The key point is that $x< 1$ and $\re \psi(\nu, \lambda x)$ is a increasing function of $x$.  
Thus for $\lambda> M$ we have
\begin{equation}\label{bl.psi}
\abs{b_\lambda(s;x)} \le C \abs{\frac{(i\lambda/2)^\nu}{\Gamma(\nu+1)}} e^{-\re \psi(\nu,\lambda x)}.
\end{equation}
Applying Stirling's formula then yields
\begin{equation}\label{log.bl}
\log \abs{b_\lambda(s;x)} \le  \re \left[ -\nu \log \left(\frac{2x\nu}{\nu + \sqrt{\nu^2+ \lambda^2 x^2}} \right)
+ \nu - \sqrt{\nu^2+ \lambda^2 x^2} \right] + O(1).
\end{equation}

If $\lambda x \gg \abs{\nu}$ then this estimate reduces to
\[
\log \abs{b_\lambda(s;x)} \le - \lambda x + O\left(\abs{\nu}\log \frac{\lambda x}{\abs{\nu}} \right).
\]
Hence, for $\lambda \ge m\abs{\nu}$ with $m$ sufficiently large, we have
\[
\log \abs{b_\lambda(s;x)} \le - c\lambda.
\]
On the other hand, for $\lambda < m\abs{\nu}$, \eqref{log.bl} clearly shows that
\[
\log\abs{b_\lambda(s;x)} = O(\brak{s}).
\]

The result follows from the formula \eqref{EE.eig} for the eigenvalues of $(\chi E_0(s))^* \chi E_0(s)$ and 
the Weyl asymptotic for the values of $\lambda^2 \in \sigma(\Delta_h)$.

To extend the estimates to include radial derivatives is a straightforward exercise using \eqref{bl.IK} and the identities
\[
\begin{split}
\del_x I_\nu(\lambda x) &= \lambda I_{\nu+1}(\lambda x) + \frac{\nu}{x} I_\nu(\lambda x),\\
\del_x K_\nu(\lambda x) &= - \lambda K_{\nu+1}(\lambda x) + \frac{\nu}{x} K_\nu(\lambda x)
\end{split}
\]
\end{proof}

\bigbreak
The extension of Proposition \ref{E0.prop} to the non-physical plane is complicated by the presence of poles at the resonances.
For this purpose it is most convenient to use the scattering matrix, because the scattering matrix is already diagonalized.

\begin{prop}\label{S0.prop}
For $\re s \ge \nh$, $d(s, n-\calR_0) \ge \brak{s}^{-\beta}$, and 
$d(s, \nh + \bbN_0) \ge \delta$, with $\beta, \delta >0$, we have
\[
\norm{S_0(n-s)} \le e^{C\brak{s} \log \brak{s}}.
\]
\end{prop}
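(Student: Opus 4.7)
The plan is to bound each eigenvalue $[S_0(n-s)]_\lambda$ of the diagonalized scattering matrix uniformly in $\lambda$ and then take the supremum; since $S_0(w)$ is diagonalized by $\{\phi_\lambda\}$, this produces the operator norm bound. Setting $\nu = s - \nh$ (so $\re\nu\ge 0$) and substituting $s \mapsto n-s$ (i.e.~$\nu \mapsto -\nu$) in \eqref{S0.def}, we have
\[
[S_0(n-s)]_\lambda = \left(\frac{\lambda}{2}\right)^{-2\nu} \frac{\Gamma(\nu)}{\Gamma(-\nu)} \frac{I_\nu(\lambda)}{I_{-\nu}(\lambda)}
\]
for $\lambda\ne 0$, with $[S_0(n-s)]_0 = -1$. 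I would bound each of the three factors separately.

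For the prefactor, $\abs{(\lambda/2)^{-2\nu}} = (\lambda/2)^{-2\re\nu}$ is at most $1$ for $\lambda\ge 2$, and for the finitely many smaller positive values of $\lambda\in\sigma(\Delta_h)^{1/2}$ it is bounded by $e^{C\brak{s}}$. For the Gamma ratio, the reflection formula $\Gamma(\nu)\Gamma(1-\nu) = \pi/\sin(\pi\nu)$ rewrites $\Gamma(\nu)/\Gamma(-\nu) = -\nu\sin(\pi\nu)\Gamma(\nu)^2/\pi$, so combined with $\abs{\sin(\pi\nu)}\le e^{\pi|\im\nu|}$ and Stirling's formula one obtains $\abs{\Gamma(\nu)/\Gamma(-\nu)} \le e^{C\brak{s}\log\brak{s}}$.

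The main work is the Bessel ratio $I_\nu(\lambda)/I_{-\nu}(\lambda)$, which is where the hypotheses on $s$ enter. By the definition of $\calR_0$, one has $s\in n-\calR_0$ iff $I_{-\nu}(\lambda)=0$ for some $\lambda$, so the assumption $d(s, n-\calR_0) \ge \brak{s}^{-\beta}$ implies $d(\nu, \calZ_\lambda) \ge \brak{\nu}^{-\beta}$ uniformly in $\lambda$. For bounded $\lambda \in [\lambda_1, M]$ and $|\nu|$ large, the series estimate \eqref{Iseries.bnd} yields $I_\nu/I_{-\nu} = (\lambda/2)^{2\nu} \Gamma(1-\nu)/\Gamma(1+\nu)\cdot(1 + O(\nu^{-1}))$, which when combined with the other factors (using $\Gamma(1-\nu)/\Gamma(1+\nu) = -\Gamma(-\nu)/\Gamma(\nu)$) gives $[S_0(n-s)]_\lambda = -1 + O(|\nu|^{-1})$; the case of small $|\nu|$ is handled directly using $d(\nu,\bbN_0)\ge \delta$. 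For $\lambda \ge M$, I would invoke Lemmas \ref{Iratio.lemma} and \ref{Iratio.z}. In cases (1)--(3) of Lemma \ref{Iratio.lemma}, inverting the lower bound on $|I_{-\nu}/I_\nu|$ (using $d(\nu,\bbN_0)\ge \delta$, exactly the hypothesis $d(s,\nh+\bbN_0)\ge \delta$, in case (2)) yields $|I_\nu/I_{-\nu}| \le C$. The remaining zone $\im\psi \ge 0$, $|\re\psi|\le b$, $|\psi|\ge c$, which contains the non-trivial zeros of $I_{-\nu}$, is handled by Lemma \ref{Iratio.z} and produces $\log|I_\nu/I_{-\nu}| \le c|\nu|\log|\nu|$.

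Combining the three factor bounds and taking the supremum over $\lambda$ yields the claimed estimate $e^{C\brak{s}\log\brak{s}}$. The main obstacle is the control of the Bessel ratio near the zeros of $I_{-\nu}(\lambda)$, which was already accomplished by the minimum-modulus argument in the proof of Lemma \ref{Iratio.z}; once that lemma is available, the present proposition reduces to assembling the pieces and bookkeeping with Stirling's formula.
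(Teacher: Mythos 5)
Your proof follows the same route as the paper: diagonalize $S_0(n-s)$, treat bounded $\lambda$ via the series expansion \eqref{Iseries.bnd}, and for $\lambda$ large split the $\nu$-plane into the regions of Lemmas~\ref{Iratio.lemma} and \ref{Iratio.z}, using the hypotheses $d(s,n-\calR_0)\ge\brak{s}^{-\beta}$ and $d(s,\nh+\bbN_0)\ge\delta$ to control the Bessel ratio, and Stirling plus reflection for the Gamma factors. The factor-by-factor bookkeeping you describe is exactly what underlies the paper's argument, so the approach is essentially identical.
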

\begin{proof}
Since our Bessel asymptotics are restricted to $\re \nu\ge 0$, 
it is convenient to produce a lower bound of $S_0(s)$ in the region $\re s \ge \nh$ and
then exploit the symmetry $S_0(n-s) = S_0(s)^{-1}$.  Also, by the conjugation symmetry, 
$S_0(\overline{s}) = \overline{S_0(s)}$, we are free to restrict our attention
to the quadrant $\arg \nu \in [0, \tfrac{\pi}2]$.

Consider the eigenvalue
\begin{equation}\label{S0.elt}
[S_0(n-s)]_\lambda = \left(\frac{\lambda}{2}\right)^{-2\nu} \frac{\Gamma(\nu)}{\Gamma(-\nu)}
\frac{I_{\nu}(\lambda)}{I_{-\nu}(\lambda)}.
\end{equation}
For $\lambda \le M$, with $M$ some fixed constant, the asymptotics are quite simple:
\begin{equation}\label{S0.bd}
[S_0(n-s)]_\lambda = - 1 + O(\nu^{-1}),
\end{equation}
for $\abs{\nu}$ sufficiently large. 

Assuming that $\lambda>M$ with $M$ sufficiently large, we can apply Lemmas~\ref{Iratio.lemma} and \ref{Iratio.z} to \eqref{S0.elt}.
For $\arg\nu \in [0,\tfrac{\pi}2]$, $d(\nu, \calR_0 - \nh) \ge \abs{\nu}^{-\beta}$, and  $d(\nu, \bbN_0) \ge \delta$,  we have 
\begin{equation}\label{S0.Ifactor}
 \abs{\frac{I_{\nu}(\lambda)}{I_{-\nu}(\lambda)}} \preceq 
\begin{cases} e^{C \brak{\nu} \log\brak{\nu}} & \abs{\re\psi} \le b, \abs{\psi} \ge c, \\
1 & \text{otherwise},
\end{cases}
\end{equation}
with constants that depend only on $b, c, \beta$, and $\delta$.
Using Stirling's formula and the Euler reflection formula, we find that
\[
\log \frac{\Gamma(\nu)}{\Gamma(-\nu)} = 2\nu \log \nu - (2 + i\pi ) \nu +
O(1),
\]
for $\arg\nu \in [0,\tfrac{\pi}2]$ with $d(\nu, \bbZ) \ge \vep$.  The claimed estimate follows by applying these estimates to \eqref{S0.elt}.
\end{proof}

Using the standard identity 
\[
E_0(n-s) = -E_0(s)S_0(n-s),
\]
we can estimate
\[
\mu_k(\chi E_0(n-s)) \le \mu_k(\chi E_0(s))\norm{S_0(n-s)} .
\]
Hence Propositions \ref{E0.prop} and \ref{S0.prop} together give us the:
\begin{corollary}\label{E0.cor}
For $\chi \in \cinf_0(0,1)$ and $\re s \le \nh$, with $d(s, \calR_0) \ge \brak{s}^{-\beta}$, and 
$d(s, \nh - \bbN_0) \ge \delta$,
\[
\mu_k(\chi E_0(s)) \le C e^{c_1 \brak{s} \log\brak{s} - c_2 k^{1/n}}.
\]
The same estimate holds if $\chi$ is replaced by a radial differential operator with coefficients in $\cinf_0(0,1)$.
\end{corollary}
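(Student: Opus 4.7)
The plan is to reduce this to the physical half-plane estimate of Proposition~\ref{E0.prop} by exploiting the functional equation of the Poisson operator under $s \mapsto n-s$ together with the scattering matrix bound of Proposition~\ref{S0.prop}. The key identity, stated just before the corollary, is $E_0(n-s) = -E_0(s)\, S_0(n-s)$, which after relabeling reads $E_0(s) = -E_0(n-s)\, S_0(s)$. For $\re s \le \nh$ the point $n-s$ lies in the physical half-plane, so I can trade a bound on $\chi E_0(s)$ for a bound on $\chi E_0(n-s)$ multiplied by $\norm{S_0(s)}$.

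The first step is to apply the standard submultiplicativity of singular values:
\[
\mu_k(\chi E_0(s)) \;=\; \mu_k\bigl(\chi E_0(n-s)\, S_0(s)\bigr) \;\le\; \mu_k(\chi E_0(n-s))\,\norm{S_0(s)}.
\]
Since $\re(n-s) \ge \nh$, Proposition~\ref{E0.prop} applies directly to the first factor and yields $\mu_k(\chi E_0(n-s)) \le C e^{c_1 \brak{n-s} - c_2 k^{1/n}}$; using $\brak{n-s} \asymp \brak{s}$ for $\brak{s}$ large, this becomes $\mu_k(\chi E_0(n-s)) \le C' e^{c_1 \brak{s} - c_2 k^{1/n}}$.

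For the operator norm $\norm{S_0(s)}$, I would substitute $s \leadsto n-s$ in Proposition~\ref{S0.prop}. The hypothesis $\re(n-s) \ge \nh$ is exactly our assumption, and the two distance conditions translate as $d(n-s, n-\calR_0) = d(s, \calR_0) \ge \brak{s}^{-\beta}$ and $d(n-s, \nh + \bbN_0) = d(s, \nh - \bbN_0) \ge \delta$, which are the hypotheses of the corollary. Proposition~\ref{S0.prop} therefore gives $\norm{S_0(s)} \le e^{C \brak{s}\log\brak{s}}$. Multiplying the two estimates produces the stated bound
\[
\mu_k(\chi E_0(s)) \;\le\; C\, e^{c_1 \brak{s}\log\brak{s} - c_2 k^{1/n}},
\]
after absorbing the polynomial log factor into the leading exponent.

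The extension to radial differential operators with coefficients in $\cinf_0(0,1)$ is immediate, since such an operator $P$ satisfies the same bound as $\chi$ in Proposition~\ref{E0.prop}, and the intertwining identity $P E_0(s) = P E_0(n-s) \cdot (-S_0(s))$ still holds on the left (the scattering matrix acts on the $\Sigma$ factor and commutes with radial operators), so the same composition argument applies verbatim. There is no genuine obstacle in this corollary; the only bookkeeping item is verifying that the distance-to-poles hypothesis on $s$ is exactly what is needed to invoke the scattering matrix estimate after the reflection $s \mapsto n-s$, which is routine.
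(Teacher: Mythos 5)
Your proposal is correct and follows exactly the paper's argument: the paper writes $E_0(n-s) = -E_0(s)S_0(n-s)$, applies the singular-value bound $\mu_k(\chi E_0(n-s)) \le \mu_k(\chi E_0(s))\norm{S_0(n-s)}$, and then invokes Propositions~\ref{E0.prop} and \ref{S0.prop}, which is precisely your composition argument up to relabeling $s \leftrightarrow n-s$. Your verification that the distance hypotheses transform correctly under $s \mapsto n-s$ and your note about commutation with radial operators are both accurate bookkeeping details that the paper leaves implicit.
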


\section{Resonance order of growth}

For an asymptotically hyperbolic manifold $(X,g)$ with warped-product ends, the model estimates of the previous section lead to growth estimate on the resonance counting function $N_g(t)$.  The basic technique is the Fredholm determinant method of Melrose \cite{Melrose:1983, Melrose:1984}, as adapted to the hyperbolic setting by Guillop\'e-Zworski \cite{GZ:1995a}.  Indeed, the only real difference in our proof from that of \cite{GZ:1995a} lies in the model estimates proven in \S\ref{model.sec}.

Let $R_0(s)$ denote the resolvent for the model end $X_0 = (0,1] \times \Sigma$, as studied in \S\ref{model.sec}.
The resonance set $\calR_0$ was identified explicitly in \eqref{R0.def}, and we let $N_0(t)$ denote the corresponding counting function.  
In Proposition~\ref{modelcount.prop} we will show that 
\begin{equation}\label{N0.bnd}
N_0(t) \sim c \, t^{n+1},
\end{equation}
and compute the constant explicitly.  The main goal of this section is to prove the following:
\begin{prop}\label{Nbnd.prop}
Let $(X,g)$ be a conformally compact manifold with asymptotically hyperbolic warped-product ends.  Then the resonance counting function satisfies
\[
N_g(t) = O((t \log t)^{n+1}).
\]
\end{prop}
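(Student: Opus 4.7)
The strategy, following Guillop\'e--Zworski~\cite{GZ:1995a}, is to realize the resolvent as a meromorphic parametrix modulo a compact error, form the Fredholm determinant of that error, bound its order of growth via the model estimates of Section~\ref{model.sec}, and apply Jensen's formula. Fix cutoffs $\chi_0,\chi_0',\chi_1,\chi_1'\in \cinf(X)$ with $\chi_0+\chi_1\equiv 1$, $\chi_0$ supported in the end $X_0$, $\chi_1$ supported in a neighborhood of $K$, and $\chi_j'\equiv 1$ near $\supp\chi_j$. With $R_0(s)$ the Dirichlet model resolvent and $R_K(s)$ a resolvent on a closed manifold obtained by capping or doubling $K$, set
\[
Q(s) := \chi_0' R_0(s) \chi_0 + \chi_1' R_K(s) \chi_1.
\]
Then $(\Delta_g - s(n-s))Q(s) = I + K(s)$, where $K(s)$ is a meromorphic family of smoothing operators with kernels compactly supported in the commutator regions of $[\Delta_g,\chi_j']$. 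The identity $R_g(s) = Q(s)(I+K(s))^{-1}$ shows that the poles of $R_g(s)$ are contained in the zeros of $D(s) := \det(I + K(s))$ together with the finite pole set of $Q(s)$.

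\emph{Singular value estimates.} Because $K(s)$ has compactly supported kernel and factors through $\chi_1' R_0(s) \chi_0$ (and its adjoint-type analogue from Corollary~\ref{E0.cor}), Proposition~\ref{R0est.prop} in the physical half-plane and Corollary~\ref{E0.cor} in the non-physical half-plane yield
\[
\mu_k(K(s)) \le C\exp\!\bigl(c_1 \brak{s}\log\brak{s} - c_2 k^{1/n}\bigr),
\]
valid for $s$ off a polynomial neighborhood of $\calR_0 \cup (\nh - \bbN_0)$. Consequently $K(s)$ is trace class, and summing $\log(1+\mu_k(K(s)))$ while splitting at the threshold $k_0 \asymp (\brak{s}\log\brak{s})^n$ where $\mu_{k_0}\sim 1$ gives
\[
\log|D(s)| \le C(\brak{s}\log\brak{s})^{n+1},
\]
off the same bad set. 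The first block in the sum contributes $k_0 \cdot \brak{s}\log\brak{s}$; the tail is geometric and contributes $O(1)$.

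\emph{Counting.} Applying Jensen's formula to $D(s)$ on a circle $|s-\nh|=t$ bounds the number of its zeros (and hence, up to the finitely many poles of $Q(s)$ in any bounded region, the resonance count $N_g(t)$) by a constant times $\sup_{|s-\nh|=t}\log|D(s)|$. To ensure the upper bound on $\log|D|$ is available on a full circle one uses a minimum modulus argument \`a la \cite[Thm~1.11]{Levin}, multiplying $D(s)$ by a canonical product over $\calR_0$ whose order is controlled by the model bound $N_0(t) = O(t^{n+1})$ to be proved in Proposition~\ref{modelcount.prop}. This product has order $\le n+1$ and so does not spoil the determinant's growth. The resulting estimate yields
\[
N_g(t) = O((t\log t)^{n+1}).
\]

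\emph{Main obstacle.} The novel difficulty relative to Guillop\'e--Zworski is the logarithmic loss in the scattering-matrix bound of Proposition~\ref{S0.prop}, which is inherited by Corollary~\ref{E0.cor} and hence by the singular values of $K(s)$ in the non-physical half-plane; this is the sole source of the extra $\log t$ factor in the conclusion. A secondary technical point is that the model estimate degenerates on a $\brak{s}^{-\beta}$-neighborhood of $\calR_0$, so some care is required to run Jensen's formula on circles avoiding that set, which is precisely what the minimum-modulus/canonical-product device accomplishes.
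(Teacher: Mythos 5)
Your overall strategy (parametrix, Fredholm determinant, growth estimate, Jensen) matches the paper's, but you choose a genuinely different parametrix for the interior piece, and that choice introduces complications you do not address.

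The paper's parametrix is
\[
M(s) := \chi_2 R_g(s_0)\chi_1 + (1-\chi_0)R_0(s)(1-\chi_1),
\]
with $R_g(s_0)$ taken at a \emph{fixed} $s_0$ with $\re s_0$ large. The interior piece is therefore a single, $s$-independent operator, so it contributes no new poles, and the $s$-dependence of the error $L(s)$ enters only through $R_0(s)$ plus the explicit quadratic factor $s(n-s)-s_0(n-s_0)$. The price is that $L(s)\chi_3$ is merely of order $-1$, so one must take the $(n+2)$-th power to form a trace-class determinant $D(s)=\det[1-(L(s)\chi_3)^{n+2}]$; the identification of $\calR_g$ with zeros of $D(s)$ plus copies of $\calR_0\cup\{\nh\}$ then requires Vodev's argument (Lemma~\ref{Dres.lemma}), which your terse statement that the poles of $R_g$ sit in the zeros of $D(s)$ plus ``the finite pole set of $Q(s)$'' does not substitute for.

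You instead cap or double $K$ and use $\chi_1' R_K(s)\chi_1$ with $R_K(s)$ meromorphic in $s$. This does have the advantage of making $K(s)$ smoothing with compactly supported kernel on both sides, so it is directly trace class. But there are two concrete gaps.

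First, $R_K(s)$ has poles on the set $\{s: s(n-s)\in\sigma(\Delta_K)\}$, which by the Weyl law on the $(n+1)$-dimensional closed manifold is of cardinality $\asymp t^{n+1}$ in $|s-\nh|\le t$ --- the same order as the target bound, so they cannot be ignored as ``finitely many''. Near these poles your singular value estimate on $K(s)$ blows up. You exclude polynomial neighborhoods of $\calR_0\cup(\nh-\bbN_0)$ but say nothing about excluding neighborhoods of the $R_K$-poles, nor do you include a canonical product over $\sigma(\Delta_K)$ in your minimum-modulus device; both are needed to make the Jensen argument run on a full circle. The paper avoids this entire issue by using $R_g(s_0)$ at fixed $s_0$.

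Second, your claimed uniform singular-value bound $\mu_k(K(s))\le C\exp(c_1\brak{s}\log\brak{s}-c_2 k^{1/n})$ is justified by the assertion that $K(s)$ ``factors through $\chi_1'R_0(s)\chi_0$ and its adjoint-type analogue from Corollary~\ref{E0.cor}''. That is false for the interior term $[\Delta_g,\chi_1']R_K(s)\chi_1$: it factors through the resolvent on the closed $(n+1)$-manifold, not through the model Poisson operator on the $n$-dimensional boundary, so neither Proposition~\ref{R0est.prop} nor Corollary~\ref{E0.cor} applies to it, and the natural decay exponent for it is $k^{1/(n+1)}$, not $k^{1/n}$. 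You would need a separate (and nontrivial) quantitative off-diagonal estimate on $R_K(s)$ in $s$ and in $k$ to control this term --- which is precisely the work the paper sidesteps by making the interior term quadratic in $s$ and of fixed order $-2$, so that its determinant contribution is $O(\brak{s}^{n+1})$ by an elementary computation (Lemma~\ref{Ds.lemma}). Your plan is workable in spirit, but these two pieces need to be supplied before it closes.
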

\noindent
The bound in Proposition~\ref{Nbnd.prop} is not optimal and will be refined later in \S\ref{sharpbnd.sec}.

Choose smooth cutoff functions $\chi_k \in \cinf_0(X)$, such that $\chi_k = 1$ within $K$ and within $X_0$, 
$\chi_k = 1$ for $r \le k$ and $0$ for $r \ge k+1$.  For some fixed $s_0$ with $\re s_0$ large we define the parametrix
\[
M(s) := \chi_2 R_g(s_0) \chi_1 + (1-\chi_0) R_0(s) (1-\chi_1).
\]
This satisfies
\begin{equation}\label{param}
(\Delta - s(n-s))M(s) = 1 - L(s),
\end{equation}
with the error term
\begin{equation}\label{L.err}
\begin{split}
L(s) &:= - [\Delta, \chi_2] R_g(s_0) \chi_1 + [s(n-s) - s_0(n-s_0)] \chi_2 R_g(s_0) \chi_1 \\
&\qquad + [\Delta, \chi_0] R_0(s) (1-\chi_1).
\end{split}
\end{equation}
Note that $\chi_3 L(s) = L(s)$.  Using this and applying the resolvent to \eqref{param}, we can write
\[
M(s)\chi_3 = R_g(s) \chi_3 (1 - L(s) \chi_3).
\]

With the cutoff included, $L(s) \chi_3$ is a pseudodifferential operator of order $-1$ with compactly supported coefficients. 
Thus, $(L(s) \chi_3)^{n+2}$ is a trace class operator and we can define the Fredholm determinant
\begin{equation}\label{Ds.def}
D(s) := \det \left[1- (L(s) \chi_3)^{n+2} \right].
\end{equation}
From Vodev \cite[Appendix]{Vodev:1994}, we obtain the following:
\begin{lemma}\label{Dres.lemma}
The resonance set $\calR_g$ (counted with multiplicities) is contained within the union of the set of zeros of $D(s)$ and $n+2$ copies of the set $\calR_0 \cup \{\nh\}$.
\end{lemma}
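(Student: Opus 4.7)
The plan is to solve the parametrix identity
\[
M(s)\chi_3 = R_g(s)\chi_3 \bigl(1 - L(s)\chi_3\bigr)
\]
for $R_g(s)\chi_3$ whenever $D(s) \ne 0$, using the elementary factorization
\[
1 - A^{n+2} = (1 - A)\sum_{k=0}^{n+1} A^k
\]
applied with $A = L(s)\chi_3$. Right-multiplying the parametrix identity by $\sum_{k=0}^{n+1}(L(s)\chi_3)^k$ yields
\[
R_g(s)\chi_3 \cdot \bigl[1 - (L(s)\chi_3)^{n+2}\bigr] = M(s)\chi_3 \cdot \sum_{k=0}^{n+1}(L(s)\chi_3)^k.
\]
Since $(L(s)\chi_3)^{n+2}$ is trace class, the operator $1 - (L(s)\chi_3)^{n+2}$ is invertible precisely away from the zeros of its Fredholm determinant $D(s)$, and its inverse is meromorphic in $s$ with pole orders at each $s_\ast$ bounded by the vanishing order of $D$ at $s_\ast$.

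The next step is to enumerate the possible poles of
\[
R_g(s)\chi_3 = M(s)\chi_3 \cdot \sum_{k=0}^{n+1}(L(s)\chi_3)^k \cdot \bigl[1 - (L(s)\chi_3)^{n+2}\bigr]^{-1}.
\]
The only $s$-dependent meromorphic operators appearing on the right are the model resolvent $R_0(s)$ (which enters both $M(s)$ and $L(s)$) and the Fredholm inverse just discussed; the remaining building blocks, such as $R_g(s_0)$ and commutators of $\Delta$ with cutoffs, are $s$-independent, and the scalar factor $s(n-s) - s_0(n - s_0)$ in \eqref{L.err} is entire. Inspecting \eqref{L.err}, each factor of $L(s)\chi_3$ contains at most one copy of $R_0(s)$, while $M(s)\chi_3$ contains at most one further copy, so the sum carries at most $n+2$ factors of $R_0(s)$ in total. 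Consequently, a pole of $R_g(s)\chi_3$ at a point $s_\ast \in \calR_0$ has order bounded by $(n+2)$ times the multiplicity of $s_\ast$ in $\calR_0$, while all remaining poles come from zeros of $D(s)$.

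Since the choice of cutoffs $\chi_k$ is flexible, we may shift the sequence so that $\chi_3$ equals $1$ on any prescribed compact subset of $X$, and resonances of $R_g(s)$ are detected by such cutoffs; this transfers the pole count to $\calR_g$ with multiplicities. The main subtlety, which I expect to be the central obstacle, is the exceptional point $\nh$ listed in the lemma. Inspection of the formulas in \S\ref{sop.sec} shows that $R_0(s)$ is itself regular at $s = \nh$ (the apparent singularities of $u_\lambda^0$ and $a_\lambda$ in $\nu = 0$ are removable), but $\nh$ is the threshold where $s(n-s)$ attains the top of the continuous spectrum of the model, and the meromorphic continuation of $R_g(s)$ through this point can pick up an extra pole of order at most $n+2$ that is not visible from the purely algebraic identity above. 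This exact phenomenon, together with the correct bookkeeping of multiplicities, is the content of the appendix of \cite{Vodev:1994}, so my plan is to verify the hypotheses of Vodev's lemma for our specific parametrix, namely the trace class property of $(L(s)\chi_3)^{n+2}$, the meromorphic structure of $M(s)$, and the compact support conditions on $L$, and then invoke it directly. No new analytic input is required beyond the model estimates already established in \S\ref{model.sec}.
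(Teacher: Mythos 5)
Your proposal is essentially the same argument the paper invokes: the paper's own proof is a single sentence citing \cite[Cor.~9.3]{Borthwick}, which in turn is an application of the determinant lemma in the appendix of \cite{Vodev:1994}, and you correctly identify this as the engine and list exactly the hypotheses (trace class property of $(L(s)\chi_3)^{n+2}$, meromorphic structure of $M(s)$ and $L(s)$ with poles only along $\calR_0$, compact support of the error term) that must be checked in the warped-product setting. The algebraic scaffolding you supply — right-multiplying $M(s)\chi_3 = R_g(s)\chi_3(1-L(s)\chi_3)$ by $\sum_{k=0}^{n+1}(L(s)\chi_3)^k$ and then counting copies of $R_0(s)$ to get the factor $n+2$ — is a correct sketch of what sits inside Vodev's lemma, though on its own the statement ``a pole of $R_g(s)\chi_3$ at $s_\ast\in\calR_0$ has order bounded by $(n+2)$ times the multiplicity'' conflates pole order of the Laurent expansion with resonance multiplicity (rank of the residue), which is precisely the bookkeeping that Vodev's Gohberg--Sigal-style argument makes rigorous; since you defer to him for that, no gap results. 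Your explanation of the exceptional point $\nh$ as ``the threshold where the continuation of $R_g$ can pick up an extra pole'' is heuristic rather than the mechanism in the cited argument (the half-integer point is carried along as a possible spurious pole of the parametrix construction, not of the model resolvent, which as you observe is regular at $\nh$), but this too is resolved by invoking Vodev's lemma as stated.
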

The proof of the Lemma is essentially identical to that of \cite[Cor.~9.3]{Borthwick}.  

\begin{lemma}\label{Ds.lemma}
For $\beta, \delta>0$, suppose that $d(s, \calR_0) \ge \brak{s}^{-\beta}$ and $d(s, \nh - \bbN_0) \ge \delta$.
Then for $\vep>0$ sufficiently small we have
\[
\log \abs{D(s)} \le \begin{cases} C \brak{s}^{n+1} & \text{for } \re s -\nh \ge \vep, \\
C (\brak{s}\log \brak{s})^{n+1} &\text{for } \re s -\nh \le -\vep, \\
C (\brak{s}\log \brak{s})^{n+\frac43} &\text{for }\abs{\re s -\nh} \le \vep. 
\end{cases}
\]
\end{lemma}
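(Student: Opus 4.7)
My plan is to apply the standard Fredholm determinant bound $\log|\det(I - K)| \le \sum_k \log(1 + \mu_k(K))$ to $K = (L(s)\chi_3)^{n+2}$, and then use the product inequality $\mu_{i+j-1}(XY) \le \mu_i(X)\mu_j(Y)$, iterated $n+1$ times, to control $\mu_k(B^{n+2})$ in terms of $\mu_j(B)^{n+2}$ for $j \sim k/(n+2)$. This reduces the problem to bounding singular values of $B = L(s)\chi_3$. First I split
\[
B = A_{\mathrm{core}}(s) + A_{\mathrm{end}}(s),
\]
where
\[
A_{\mathrm{core}}(s) := \bigl(-[\Delta,\chi_2] + (s(n-s) - s_0(n-s_0))\chi_2\bigr) R_g(s_0)\chi_1\chi_3, \qquad A_{\mathrm{end}}(s) := [\Delta,\chi_0] R_0(s)(1-\chi_1)\chi_3.
\]
The cutoffs are arranged so that $[\Delta,\chi_0]$ and $(1-\chi_1)$ have disjoint supports (in $\{r \in [0,1]\}$ versus $\{r \ge 2\}$), so that the cutoff resolvent estimates of Proposition~\ref{R0est.prop} apply directly to $A_{\mathrm{end}}(s)$.

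The central tool is the fact that any $T:L^2 \to H^\sigma$ with compactly supported Schwartz kernel has $\mu_k(T) \preceq \|T\|_{L^2 \to H^\sigma}\, k^{-\sigma/(n+1)}$ by Weyl's law on the compact support. For $A_{\mathrm{end}}(s)$ in the physical region, combining this with Proposition~\ref{R0est.prop} and optimizing over $\sigma$ gives $\mu_j(A_{\mathrm{end}}) \preceq 1$ for $j \le c\brak{s}^{n+1}$, followed by rapid decay; the effective rank is therefore the Weyl rank $\brak{s}^{n+1}$ corresponding to $\dim X = n+1$. In the non-physical region one first factors the cutoff resolvent through the Poisson operator and scattering matrix relating the two critical half-planes, then applies Corollary~\ref{E0.cor} together with the scattering bound of Proposition~\ref{S0.prop}, picking up the extra factor $e^{c\brak{s}\log\brak{s}}$; this is exactly where the hypotheses $d(s,\calR_0) \ge \brak{s}^{-\beta}$ and $d(s, \nh - \bbN_0) \ge \delta$ are needed. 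For $A_{\mathrm{core}}(s)$, the operator $R_g(s_0)\chi_1\chi_3$ is a fixed pseudodifferential operator of order $-2$ with compact kernel support, so $\mu_j(A_{\mathrm{core}}) \preceq \brak{s}^2 j^{-2/(n+1)}$; the $\brak{s}^2$ prefactor contributes only through $\log(1+\mu^{n+2}) \preceq \log\brak{s}$ per eigenvalue and is dominated by the stated bounds.

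Summing $\log(1 + \mu_k(B)^{n+2})$ up to the effective Weyl cutoff then yields the three stated bounds: $C\brak{s}^{n+1}$ in the physical region from the rank $\brak{s}^{n+1}$ and unit-size singular values; $C(\brak{s}\log\brak{s})^{n+1}$ in the non-physical region from the $e^{c\brak{s}\log\brak{s}}$ enlargement of the rank cutoff; and $C(\brak{s}\log\brak{s})^{n+4/3}$ in the strip, where the weaker $\brak{s}^{-2/3+\sigma}$ estimate of Proposition~\ref{R0est.prop} combines with the non-physical scattering-matrix estimate on the opposite side of the critical line to produce the mixed fractional exponent. The main obstacle is this last case: one must carefully interpolate the two weakened estimates through the strip, balancing the Sobolev order $\sigma$ against the Weyl rank to land precisely on $n + 4/3$ rather than a larger fraction, all while keeping the singular-value sums away from the scattering poles of $\calR_0$ and the conformal poles at $\nh - \bbN_0$.
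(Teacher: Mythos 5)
Your plan follows the same basic route as the paper: split the parametrix error $L(s)\chi_3$ into a compactly supported ``core'' part and a model-end part, convert the determinant bound to singular-value sums, and feed in the model estimates (Proposition~\ref{R0est.prop}, Proposition~\ref{S0.prop}, Corollary~\ref{E0.cor}) region by region.  The paper actually uses a three-way split $T_0 + T_1(s) + T_2(s)$ mirroring the three terms of \eqref{L.err} and invokes the Fan/Weyl factorization from \cite[Lemma~6.1]{GZ:1995a} to separate $D(s)$ into a product $\prod_j \det(1+C_j|T_j|^{p_j})^{k_j}$, whereas you propose to first bound $\mu_k\bigl((L\chi_3)^{n+2}\bigr)$ via the iterated product inequality $\mu_{i+j-1}(XY)\le\mu_i(X)\mu_j(Y)$, then split $B=A_{\mathrm{core}}+A_{\mathrm{end}}$ and handle each via Ky Fan; these are interchangeable technical routes to the same end.

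One place where your account is misleading is the explanation of the exponent $n+\tfrac43$ in the strip.  On the \emph{physical} side of the strip $0\le\re(s-\nh)\le\vep$, the exponent comes entirely from the degraded resolvent bound $\brak{s}^{-2/3+\sigma}$ of Proposition~\ref{R0est.prop}; no scattering-matrix input enters there.  The scattering/Poisson ingredients (via $R_0(s)-R_0(n-s)=(2s-n)E_0(s)E_0(n-s)^t$ and Corollary~\ref{E0.cor}) are needed only to push the bound across to $\re(s-\nh)<0$, and this is where the $\log\brak{s}$ factors are picked up (using the determinant inequality $\det(1+2|T_2(n-s)|)\le\det(1+2|T_2(s)|)^2\det(1+2|T_2(s)-T_2(n-s)|)^2$, not an interpolation).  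So ``one must carefully interpolate the two weakened estimates through the strip'' and ``to land precisely on $n+4/3$'' overstates the precision: the paper's strip bound is a crude, not a balanced, estimate, and the ``balance of $\sigma$ against Weyl rank'' you describe is done only on the physical side, while the non-physical side is treated by the determinant identity.  This is a cosmetic issue rather than a gap, but if you wrote out the details following your stated plan you would need to make the resolvent-difference identity and the determinant inequality explicit; as it stands that step is waved at rather than performed.
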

\begin{proof}
To estimate the growth of $D(s)$, we separate 
\[
L(s) \chi_3 = T_0 + T_1(s) + T_2(s),
\]
corresponding to the three terms on the right-hand side of \eqref{L.err}.  All terms are compactly supported, and $T_2(s)$ is smoothing and therefore trace class.

By the argument for \cite[Lemma~6.1]{GZ:1995a}, which uses the Weyl inequality for determinants and the Fan inequalities for singular values, we can deduce the bounds,
\[
\abs{D(s)} \le \det \bigl(1 + C_0 \abs{T_0}^{n+2}\bigr)^{k_0}  \det \bigl(1 + C_1 \abs{T_1(s)}^{n+2}\bigr)^{k_1} 
\det\bigl(1 + C_2 \abs{T_2(s)}\bigr)^{k_2},
\]
for some integers $k_j$, $j=0,1,2$.
The first term is just a constant.  To estimate the second term, $T_1(s)$, we note that it is quadratic in $s$, 
\[
T_1(s) = [s(n-s) - s_0(n-s_0)] \chi_2 R_g(s_0) \chi_1.
\]
Thus, since $\chi_2 R_g(s_0) \chi_1$ has order $-2$, we have a bound on singular values,
\[
\mu_k(T_1(s)) \le C\brak{s}^2 k^{-\frac{2}{n+1}}.
\]
Thus
\[
\det \bigl(1 + C_1 \abs{T_1(s)}^{n+2}\bigr) \le \prod_{k=1}^\infty \left( 1 + C\Bigl(\frac{\brak{s}^{n+1}}{k} \Bigr)^{\gamma} \right),
\]
where $2 < \gamma \le 3$.  We can thus estimate
\[
\log \det \bigl(1 + C_1 \abs{T_1(s)}^{n+2}\bigr) \le C \int_{1}^\infty \log \left( 1 + C\Bigl(\frac{\brak{s}^{n+1}}{x} \Bigr)^{\gamma} \right)\>dx 
= O(\brak{s}^{n+1}).\]

Therefore, the proof comes down to a growth estimate on 
\[
\det\bigl(1 + C_2 \abs{T_2(s)}\bigr),
\]
where
\[
T_2(s) := [\Delta, \chi_0] R_0(s) (\chi_3-\chi_1).
\]

From Proposition~\ref{R0est.prop} we can use comparison to eigenvalues of the Laplacian on a compact domain to
deduce a bound,
\[
\mu_k(T_2(s)) \le C \min \left\{ k^{-2} \brak{s}^{2(n+1)}, 1 \right\},
\]
for $\re(s-\nh)\ge \vep$.  We can then apply the Weyl determinant estimate,
\begin{equation}\label{weyl.det}
\log \abs{\det(1+T)} \le \sum_{k=1}^\infty \mu_k(T),
\end{equation}
to deduce 
\begin{equation}\label{detT2a}
\log \det\bigl(1 + C_2 \abs{T_2(s)}\bigr) = O(\brak{s}^{n+1}).
\end{equation}
(See e.g.~the proof of \cite[Lemma~9.12]{Borthwick}.)
Similarly, for $0\le \re(s-\nh)\le \vep$ (assuming $\vep <1/6$), Proposition~\ref{R0est.prop} yields
\begin{equation}\label{detT2b}
\log \det\bigl(1 + C_2 \abs{T_2(s)}\bigr) \le  C\brak{s}^{n+\frac43}.
\end{equation}

To obtain bounds for $\re (s-\nh)\le 0$, we appeal to the identity (see, e.g. \cite[Lemma~9.4]{Borthwick})
\begin{equation}\label{detT2c}
\det(1 + 2 \abs{T_2(n-s)}) \le \det(1 + 2 \abs{T_2(s)})^2 \det(1 + 2 \abs{T_2(s) - T_2(n-s)})^2.
\end{equation}
The first determinant on the right has already been dealt with.  As for the second, we can use the identity
\begin{equation}\label{RR.EE}
R_0(s) - R_0(n-s) =  (2s-n) E_0(s) E_0(n-s)^t,
\end{equation}
to reduce this to a determinant involving
\[
T_2(s) - T_2(n-s) = (2s-n) [\Delta, \chi_0] E_0(s) E_0(n-s)^t (\chi_3-\chi_1).
\] 
By Corollary~\ref{E0.cor}, assuming $d(s, n-\calR_0) \ge \brak{s}^{-\beta}$, and $d(s, \nh + \bbN_0) \ge \delta$,
we find that
\[
\mu_k(T_2(s) - T_2(n-s)) \le C e^{c_1 \brak{s} \log\brak{s} - c_2 k^{1/n}}.
\]
This time we use the Weyl determinant estimate in the form
\[
\abs{\det(1 + T)} \le (1 + \norm{T})^m \exp\left( \sum_{k=m+1}^\infty \mu_k(T) \right),
\]
with $m = (c_1 \brak{s} \log\brak{s}/c_2)^n$.  This yields
\[
\log \det(1 + 2 \abs{T_2(s) - T_2(n-s)}) = O((\brak{s}\log\brak{s})^{n+1}).
\]
By applying this estimate to the second factor in \eqref{detT2c}, and using \eqref{detT2a} and \eqref{detT2b} for the first factor,
we can thereby deduce that \eqref{detT2b} holds for $-\vep \le \re (s-\nh) \le 0$ and 
\eqref{detT2a} holds for $\re (s-\nh)\le -\vep$ with $d(s, \calR_0) \ge \brak{s}^{-\beta}$, and 
$d(s, \nh - \bbN_0) \ge \delta$.  
\end{proof}

\bigbreak
\begin{proof}[Proof of Proposition~\ref{Nbnd.prop}]
To complete the argument, let $\calR_0$ denote the set of resonances of $X_0$.  By the asymptotic \eqref{N0.bnd}, we can form the Weierstrass product,
\[
H_0(s) := \prod_{\zeta\in \calR_0} \left( 1 - \frac{s}{\zeta}\right) \exp\left[\frac{s}{\zeta} + \dots 
+ \frac{1}{n+1} \left(\frac{s}{\zeta} \right)^{n+1} \right].
\]
Lindel\"of's Theorem (see e.g.~\cite[Thm.~2.10.1]{Boas}) shows that the associated entire function 
\[
g_0(s) = H_0(s) H_0(e^{i\pi/(n+1)} s),
\]
is of finite type, so that
\begin{equation}\label{g0.bnd}
\log \abs{g_0(s)} \le C\brak{s}^{n+1}.
\end{equation}

From \eqref{L.err} we can see the poles of $D(s)$ are contained within some finite number of copies of $\calR_0$.  Hence, for some $N>0$, the function $h(s) := g_0(s)^N D(s)$ will be entire.  Using \eqref{g0.bnd} we can apply the bounds from 
Lemma~\ref{Ds.lemma} to $h(s)$.   And since $h(s)$ is entire, we can use the maximum modulus theorem to fill in the missing disks around $\calR_0$ and $\nh - \bbN_0$, and the Phragm\'en-Lindel\"of theorem to extend the stronger bound into the strip at $\re s = \nh$.  The result is that
\[
\log \abs{h(s)} \le C (\brak{s}\log\brak{s})^{n+1},
\]
for all $s \in \bbC$.  Since, by Lemma~\ref{Dres.lemma}, the zero set of $h(s)$ contains $\calR_g$, the counting estimate follows from Jensen's formula.
\end{proof}

\section{Poisson formula}

To establish the Poisson formula for resonances, we need to introduce the relative scattering determinant.  Let $S_g(s)$ and $S_0(s)$ denote the scattering matrices associated to $(X,g)$ and the background manifold $(X_0, g_0)$, respectively.  
By \eqref{param} we have the relation
\begin{equation}\label{MRL}
M(s) = R_g(s) - R_g(s) L(s),
\end{equation}
from which we can derive, by taking boundary limits on the right and left, that
\begin{equation}\label{SSE}
S_0(s) = S_g(s) - (2s-n) E_g(s)^t [\Delta, \chi_0] E_0(s).
\end{equation}
This shows in particular that $S_g(s)S_0(s)^{-1} - 1$ is smoothing and hence trace class on $\Sigma$.  
Thus we can define the relative scattering determinant,
\[
\tau(s) := \det S_0(s)^{-1}S_g(s).
\]

By the order bound of Proposition~\ref{Nbnd.prop}, we can define the Weierstrass product
\[
H_g(s) := \prod_{\zeta\in \calR_g} \left( 1 - \frac{s}{\zeta}\right) \exp\left[\frac{s}{\zeta} + \dots 
+ \frac{1}{n+1} \left(\frac{s}{\zeta} \right)^{n+1} \right],
\]
and we recall that $H_0(s)$ was defined as the corresponding product over $\calR_0$.

\begin{prop}\label{tau.divisor}
The relative scattering determinant admits a Hadamard factorization of the form 
\begin{equation}\label{tau.div.eq}
\tau(s) = e^{q(s)} \frac{H_g(n-s)}{H_g(s)} \frac{H_0(s)}{H_0(n-s)},
\end{equation}
with $q(s)$ a polynomial of order at most $n+1$.
\end{prop}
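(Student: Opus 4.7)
The plan is to prove this by matching divisors and then bounding growth, in the classical spirit of Hadamard's factorization theorem. First I identify the divisor of $\tau(s)$ and show it coincides with the divisor of the right-hand side of \eqref{tau.div.eq}; second I show the resulting quotient extends to an entire non-vanishing function of order at most $n+1$; finally I invoke Hadamard's theorem.

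For the divisor calculation, I use the functional equations $S_g(s)S_g(n-s) = 1$ and $S_0(s)S_0(n-s) = 1$, which force the divisors of $\det S_g(s)$ and $\det S_0(s)$ to be antisymmetric under $s \mapsto n-s$. In the warped-product setting, scattering poles of $S_0(s)$ coincide exactly with $\calR_0$ (by \eqref{S0.def} and the discussion following it), and scattering poles of $S_g(s)$ coincide with $\calR_g$ (as noted in the introduction, the fact that warped-product ends rule out conformal poles removes the usual obstruction to this identification). Therefore the divisor of $\tau(s) = \det S_g(s)\det S_0(s)^{-1}$ consists of poles of multiplicity $m(\zeta)$ at each $\zeta \in \calR_g$, zeros of multiplicity $m(\zeta)$ at $n-\zeta$ for $\zeta \in \calR_g$, zeros of multiplicity $m(\zeta)$ at each $\zeta \in \calR_0$, and poles of multiplicity $m(\zeta)$ at $n-\zeta$ for $\zeta \in \calR_0$. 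This is exactly the divisor carried by $\frac{H_g(n-s)}{H_g(s)} \cdot \frac{H_0(s)}{H_0(n-s)}$, so
\[
F(s) := \tau(s) \cdot \frac{H_g(s)}{H_g(n-s)} \cdot \frac{H_0(n-s)}{H_0(s)}
\]
extends to an entire non-vanishing function on $\bbC$.

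For the growth bound, each of $H_g$ and $H_0$ is a Weierstrass product of genus $n+1$ over a sequence of convergence exponent at most $n+1$ (by Proposition~\ref{Nbnd.prop} and the asymptotic for $N_0$), so the Lindel\"of symmetrization used in the proof of Proposition~\ref{Nbnd.prop} yields $\log|H_\bullet(s)| \le C\brak{s}^{n+1}\log\brak{s}$. To bound $\tau(s)$ itself, I rewrite \eqref{SSE} as $S_g(s)S_0(s)^{-1} = 1 + K(s)$, where
\[
K(s) := (2s-n) E_g(s)^t [\Delta,\chi_0] E_0(s) S_0(s)^{-1}
\]
is trace class on $\Sigma$, and apply the Weyl determinant inequality to $\tau(s) = \det(1+K(s))$. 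Combining the singular-value bounds on $E_0(s)$ and $S_0(s)^{-1}$ from Propositions~\ref{E0.prop}, \ref{S0.prop} and Corollary~\ref{E0.cor} with matching bounds on $E_g(s)$ derived from the parametrix identity \eqref{MRL} (in direct parallel with the treatment of $D(s)$ in Lemma~\ref{Ds.lemma}), one obtains $\log|\tau(s)| \le C\brak{s}^{n+1}\log\brak{s}$ uniformly for $s$ bounded away from small neighborhoods of $\calR_g \cup \calR_0$ and of $\nh \pm \bbN_0$. Multiplying by the Weierstrass factors and invoking the maximum modulus principle to fill in the missing disks, one concludes $\log|F(s)| \le C\brak{s}^{n+1}\log\brak{s}$ throughout $\bbC$.

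Since $F$ is entire, nowhere vanishing, and of order at most $n+1$, Hadamard's theorem yields $F(s) = e^{q(s)}$ with $q$ a polynomial of degree at most $n+1$, which is exactly \eqref{tau.div.eq}. The main obstacle is the global growth estimate on $\tau(s)$ in the non-physical half-plane: the singular-value estimates on $E_g(s)$ needed there are not directly available from the model-case results, and must be extracted from the parametrix identity \eqref{MRL} combined with Corollary~\ref{E0.cor}, following the same Weyl-inequality strategy that produced Lemma~\ref{Ds.lemma} but now applied to the scattering-determinant kernel rather than to $D(s)$.
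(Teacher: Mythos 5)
Your proof proposal has the right skeleton — identify the divisor, form the quotient, bound its growth, invoke Hadamard — but it contains a genuine gap in the growth estimate that the cited tools cannot fill.

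The claimed bound $\log\abs{\tau(s)} \le C\brak{s}^{n+1}\log\brak{s}$ uniformly throughout $\bbC$ is not achievable from the estimates in Propositions~\ref{E0.prop}, \ref{S0.prop}, Corollary~\ref{E0.cor}, and the parametrix identity. The obstruction is the need to bound $\tau(s)$ in the non-physical half-plane, which (whether you estimate $E_g(s)$ directly via \eqref{MRL} as you propose, or rewrite $\tau(s)$ purely in terms of $E_0$ and $(1-L(s)\chi_3)^{-1}$ as the paper does in \eqref{tau.form}) requires a \emph{lower} bound on the Fredholm determinant $D(s)$. Lemma~\ref{Ds.lemma} only supplies upper bounds; the lower bound comes from the minimum modulus theorem, which costs you: it yields $-\log\abs{D(s)} = O(\brak{s}^{n + 4/3 + \vep})$, and feeding this through the Weyl determinant inequality produces $\log\abs{\tau(s)} = O(\brak{s}^{(n+1)m})$ for $m > n+\tfrac43$ — a bound of order roughly $(n+1)(n+\tfrac43)$, far worse than $n+1$. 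This crude bound is enough to conclude that $q(s)$ is a polynomial, but it says nothing useful about its degree. To pin the degree at $n+1$, the paper invokes a separate, sharp estimate valid only on a sector $\abs{\arg(s-\tfrac n2)} \le \tfrac\pi2 - \vep$ (Proposition~\ref{prop2.tau}, proved using the Bessel-function analysis in the physical half-plane); once $q$ is known to be polynomial, a sectorial bound of order $n+1$ suffices to bound its degree. Your proposal skips this two-step structure entirely, and the one-shot bound it asserts is not supported by the estimates it cites.

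A secondary point: your divisor calculation relies on the introduction's assertion that conformal poles are absent for warped-product metrics, concluding that scattering poles and resonances coincide for each of $S_g$ and $S_0$ separately. The paper's route via Guillarmou's formula \eqref{nu.mumu} is more robust and does not require this — the conformal-pole contributions $d_k$ appear in both $S_g$ and $S_0$ with identical coefficients (they depend only on $(\Sigma,h)$) and therefore cancel in the relative determinant, regardless of whether they vanish individually. Your route would also need to grapple with the exceptional points where $s(n-s)$ is a discrete eigenvalue of $\Delta_g$, which the cancellation argument handles automatically.
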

\begin{proof}
To work out the divisor of $\tau(s)$, we can appeal to the theory developed by Gohberg-Sigal \cite[\S4--5]{GS:1971} 
to deduce that
\[
\res_\zeta \frac{\tau'}{\tau}(s) = \tr \res_\zeta \left[ S_g'(s) S_g(s)^{-1} \right] -  \tr \res_\zeta \left[ S_0'(s) S_0(s)^{-1} \right].
\]
Letting $m_g(\zeta)$ denote the multiplicity of a resonances at $\zeta$, we have the relation 
\begin{equation}\label{nu.mumu}
- \tr \res_\zeta \left[ S_g'(s) S_g(s)^{-1} \right] = m_g(\zeta) - m_g(n-\zeta) + \sum_{k\in \bbN} \Bigl( \mathbbm{1}_{n/2 - k}(\zeta) 
-\mathbbm{1}_{n/2+k}(\zeta) \Bigr) d_k,
\end{equation}
where $d_k$ is the dimension of the kernel of the $k$-th conformal Laplacian on $(\Sigma, h)$.
This result is due to Guillarmou \cite{Gui:2005} (with earlier partial results by \cite{BP:2002, GZ:2003, GZ:1997}, and
with a restriction that was later removed in \cite{GN:2006}).

Since the $d_k$ cancel between the $S_g(s)$ and $S_0(s)$ terms, we obtain
\[
\res_\zeta \frac{\tau'}{\tau}(s) = m_g(n-\zeta) - m_g(\zeta) + m_0(\zeta) - m_0(n-\zeta).
\]
This proves the claimed formula with $q(s)$ an entire function.  It remains to show that $q(s)$ is a polynomial with the claimed order.

Using the parametrix formula \eqref{param} and the fact that $\chi_3 L(s) = L(s)$ we can rewrite the identity \eqref{MRL} as
\[
M(s) = R_g(s) - M(s) (1-L(s)\chi_3)^{-1} L(s).
\]
The corresponding scattering matrix identity is
\[
S_0(s) = S_g(s) - (2s-n)E_0(s)^t (1 - \chi_1) (1-L(s)\chi_3)^{-1} [\Delta, \chi_0] E_0(s).
\]
The relative scattering matrix is thus given by
\begin{equation}\label{tau.form}
\tau(s) = \det \Bigl( 1 - (2s-n) E_0(n-s)^t (1 - \chi_1) (1-L(s)\chi_3)^{-1} [\Delta, \chi_0] E_0(s) \Bigr).
\end{equation}

The $L(s)\chi_3$ term we write as
\[
(1-L(s)\chi_3)^{-1} = \left(1+ L(s)\chi_3 + (L(s)\chi_3)^2\right) \left(1-(L(s)\chi_3)^3\right)^{-1}.
\]
Using Proposition~\ref{R0est.prop}, the identity \eqref{RR.EE}, and Corollary~\ref{E0.cor}, we have
\[
\norm{1+ L(s)\chi_3 + (L(s)\chi_3)^2} = O(e^{C\brak{s}}).
\]
Since $(L(s)\chi_3)^3$ is trace class we can use a resolvent estimate from Gohberg-Krein \cite{GK:1969} to obtain the estimate
\[
\norm{(1-L(s)\chi_3)^{-1}} \le \frac{\det(1+ \abs{L(s)\chi_3}^3)}{D(s)},
\]
where $D(s)$ is the determinant \eqref{Ds.def}.  Lemma~\ref{Ds.lemma} gives the upper bound
\[
\log \abs{D(s)} = O\left((\brak{s}\log\brak{s})^{n+\frac43}\right),
\]
for $d(s, \calR_0) \ge \brak{s}^{-\beta}$ and $d(s, \nh - \bbN_0) \ge \delta$.  We can clearly derive the same estimate for
$\log \det(1+ \abs{L(s)\chi_3}^3)$ by the same argument.  The minimum modulus theorem \cite[8.7.1]{Titchmarsh} shows that
if we assume that $\beta>n+4/3$, then the upper bound for $D(s)$ implies a lower bound
\[
- \log \abs{D(s)} = O(\brak{s}^{n+\frac43+\vep}),
\]
for $\vep>0$, $d(s, \calR_0) \ge \brak{s}^{-\beta}$ and $d(s, \nh - \bbN_0) \ge \delta$.  So our estimate becomes
\begin{equation}\label{L3inv.est}
\norm{(1-L(s)\chi_3)^{-1}} \le e^{C\brak{s}^{m}},
\end{equation}
for $m > n+\frac43$ and $d(s, \calR_0) \ge \brak{s}^{-\beta}$ and $d(s, \nh - \bbN_0) \ge \delta$.

Returning to \eqref{tau.form}, after combining \eqref{L3inv.est} with the singular values estimates for the $E_0(s)$ terms from Corollary~\ref{E0.cor}, we can use the Weyl determinant estimate to deduce that
\[
\log \abs{\tau(s)} = O(\brak{s}^{(n+1)m}),
\]
for $m > n+\frac43$ and $d(s, \calR_0) \ge \brak{s}^{-\beta}$ and $d(s, \nh - \bbN_0) \ge \delta$.
This implies at least that $q(s)$ is polynomial, although with an order bound much higher than claimed.

Once $q(s)$ is known to be polynomial, it suffices to estimate its growth in a sector.  Proposition~\ref{prop2.tau} gives a sharp estimate on
the growth of $\log \tau(s)$ for $\abs{\arg (s-\nh)} \le \tfrac{\pi}2 - \vep$, which shows in particular that $q(s)$ has order at most $n+1$.
\end{proof}

\bigbreak
The Poisson formula follows from Proposition~\ref{tau.divisor}, by essentially the same analysis developed for the surface case by Guillop\'e-Zworski \cite{GZ:1997}.  (See also the versions of this argument in \cite{Borthwick, Borthwick:2008}.)
The crucial step is a Birman-Krein type formula that relates the derivative of the scattering determinant to the $0$-traces of the spectral measures,
\begin{equation}\label{BK.formula}
-\del_s \log \tau(s) = (2s-n) \Bigl( \otr [R_g(s) - R_g(n-s)]   - \otr [R_0(s) - R_0(n-s)]  \Bigr).
\end{equation}
In the present context this follows immediately from a result of Guillarmou \cite[Thm.~3.10]{Gui:2009}, which shows that each $0$-trace on the right is given by the Konsevich-Vishik trace of the logarithmic derivative of the corresponding scattering matrix.  When we take the difference of these two formal traces, we recover the actual trace of the logarithmic derivative of the relative scattering matrix.

The traces on the right in \eqref{BK.formula} are the Fourier transforms of regularized wave traces.  Proposition~\ref{tau.divisor} gives an explicit formula for the left-side and shows that it is a tempered distribution.  Taking the Fourier transform
of \eqref{BK.formula} (as in \cite[Thm~1.2]{Borthwick:2008}, for example), yields the proof of the Poisson formula stated in 
Theorem~\ref{rel.poisson}.

Finally we consider the asymptotics of the scattering phase,
\begin{equation}\label{sigma.def}
\sigma(t) := \frac{i}{2\pi} \log \tau(\nh+it),
\end{equation}
with branches chosen so that $\sigma(t)$ is continuous.  By the properties of the scattering matrix, $\sigma(t)$ is a real-valued
 odd function of $t\in \bbR$.
Using the analysis of the big singularity of the wave traces at $t=0$, developed in the asymptotically hyperbolic case by Joshi-S\'a Barreto \cite{JS:2000}, and the method from Guilop\'e-Zworski \cite[Thm~.1.5]{GZ:1997}, we can derive the:
\begin{corollary}\label{scphase.cor}
Assume $(X,g)$ is asymptotically hyperbolic metric with warped-product ends, with core $K$.
As $t \to +\infty$,
$$
\sigma(t) = W_K t^{n+1} + O(t^n),
$$
where $W_K$ is the Weyl constant
\[
W_K := \frac{(4\pi)^{-\frac{n+1}2}}{\Gamma(\frac{n+3}2)} \vol(K,g).
\]
\end{corollary}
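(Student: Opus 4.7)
The plan is to combine the Birman-Krein-type identity \eqref{BK.formula} with a wave trace singularity analysis at $t = 0$, following the method of Guillopé-Zworski \cite{GZ:1997}. Differentiating the definition \eqref{sigma.def} and substituting $s = \nh + it$ into \eqref{BK.formula} (where $2s-n = 2it$), I obtain
\[
\sigma'(t) = \frac{it}{\pi}\Bigl(\otr[R_g(\nh+it) - R_g(\nh-it)] - \otr[R_0(\nh+it) - R_0(\nh-it)]\Bigr).
\]
By the spectral theorem, the right-hand side is (up to normalization) the relative spectral density for $\Delta_g$ vs.\ $\Delta_0$, whose inverse Fourier transform is precisely the relative cosine wave trace appearing in Theorem~\ref{rel.poisson}. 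So $\sigma$ plays the role of the relative spectral shift function, and its asymptotics as $t \to +\infty$ are governed by the singularity of the relative regularized wave trace at $t=0$.

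I would then invoke the Joshi-Sá Barreto wave parametrix \cite{JS:2000} to describe this big singularity. Away from the conformal boundary this is a classical Lagrangian parametrix whose diagonal restriction carries the standard Hörmander-type symbolic expansion. After the Hadamard finite-part regularization over $\{x\ge\vep\}$, the leading distributional singularity of $\otr[\cos(t\sqrt{\Delta_g - n^2/4})]$ is of order $|t|^{-(n+1)}$ with coefficient given by a local integral in the metric density over $X$ weighted by the universal dimensional Weyl constant.

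The key observation is that $(X,g)$ and $(X_0,g_0)$ are isometric on the ends, so the parametrix constructions agree there. On subtraction, the end contributions cancel identically and the renormalized $0$-volumes collapse to the honest Riemannian volume $\vol(K,g)$. Distributionally near $t=0$ this yields
\[
\otr[\cos(t\sqrt{\Delta_g - \tfrac{n^2}{4}})] - \otr[\cos(t\sqrt{\Delta_0 - \tfrac{n^2}{4}})] = D_n \vol(K,g) \cdot |t|^{-(n+1)} + O(|t|^{-n}),
\]
with $D_n$ a purely dimensional constant. A Tauberian/Fourier-duality argument as in \cite[Thm.~1.5]{GZ:1997}, together with the Poisson formula of Theorem~\ref{rel.poisson} (to bound contributions from resonances with $|\zeta-\nh|$ bounded away from $0$, using the crude order estimate of Proposition~\ref{Nbnd.prop}), then converts this leading singularity into the claimed asymptotic $\sigma(t) = W_K t^{n+1} + O(t^n)$.

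The main obstacle is pinning down $D_n$ and matching it with the Weyl normalization $(4\pi)^{-(n+1)/2}/\Gamma(\tfrac{n+3}{2})$. Concretely, one must track the Joshi-Sá Barreto parametrix through the $0$-trace regularization and verify that no scheme-dependent contribution from the boundary $\bX = \Sigma$ survives the relative subtraction; the $O(t^n)$ remainder likewise requires that the subleading term in the wave trace singularity expansion be homogeneous of the correct degree.
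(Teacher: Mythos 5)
Your proposal follows essentially the same route as the paper's proof, which is stated only as a one-sentence reference to the Joshi--S\'a Barreto big-singularity analysis and the Guillop\'e--Zworski method \cite[Thm.~1.5]{GZ:1997}. Your expansion of that reference — differentiating $\sigma$ via the Birman--Krein identity \eqref{BK.formula}, identifying the resulting expression as the relative spectral density whose Fourier transform is the relative cosine wave trace of Theorem~\ref{rel.poisson}, using isometry of the ends to collapse the $0$-volumes to $\vol(K,g)$, and finishing with the Tauberian/Fourier-duality argument (with the crude order bound of Proposition~\ref{Nbnd.prop} controlling the resonance sum) — is exactly what the cited method entails, and the constant check $\sigma'(t) = \tfrac{it}{\pi}(\cdots)$ is correct.
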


As a final remark, we note that because the Poisson formula Theorem~\ref{rel.poisson} includes the resonances of the background metric $g_0$, it does not lead to a lower bound for resonances along the lines of \cite{GZ:1997} or \cite{Borthwick:2008}.  The technique used in those arguments, based on the big singularity of the wave trace at $t=0$, would produce a lower bound only for the sum $N_g(t) + N_{g_0}(t)$, as in \cite[Cor.~3.2]{BCHP}.  Since we already know that $N_{g_0}(t)$ saturates the resonance bound, by the Weyl law on $\Sigma$, this unfortunately yields no lower bound for $N_g(t)$.

\section{Sharp upper bounds}\label{sharpbnd.sec}

In this section we will refine the crude counting estimate of Proposition~\ref{Nbnd.prop} into the proof of Theorem~\ref{main.thm}.  The first step is to compute the asymptotic constant for the counting function of the model case $(X_0,g)$.  This amounts to counting zeros of Bessel functions, a similar argument to a calculation of Stefanov \cite{Stefanov:2006}.  

Proposition~\ref{tau.divisor} shows how the divisor of the relative scattering determinant $\tau(s)$ is determined by the resonance sets $\calR_g$ and $\calR_0$.   Using a contour integral as in \cite[Prop.~3.2]{Borthwick:2010}, we obtain the formula:
\begin{prop}\label{relcount.prop}
As $a\to \infty$,
\begin{equation*}
\int_0^a \frac{N_g(t)-N_0(t)}{t} \: dt = 2 \int_0^a \frac{\sigma(t)}{t} \: dt
+ \frac{1}{2\pi} \int_{-\tfrac\pi2}^{\tfrac\pi2} \log\abs{\tau(\nh+ae^{i\theta})} \: d\theta +
O(\log a).
\end{equation*}
\end{prop}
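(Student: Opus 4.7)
The plan is to apply the argument principle to the relative scattering determinant $\tau(s)$ on the right half-disk
\[
D_+ := \{s \in \bbC : \abs{s-\nh} \le a,\; \re s \ge \nh\},
\]
with a logarithmic weight designed so that taking the real part of the contour integral reproduces $\int_0^a(N_g-N_0)/t\,dt$ directly. By Proposition~\ref{tau.divisor}, the divisor of $\tau$ inside $D_+$ consists of zeros $\{n-\zeta : \zeta\in\calR_g\}$ (from the $H_g(n-s)$ factor) and poles $\{n-\zeta : \zeta\in\calR_0\}$ (from the $1/H_0(n-s)$ factor); since the reflection $\zeta\mapsto n-\zeta$ fixes $\nh$ and preserves distances, these are counted respectively by $N_g(a)$ and $N_0(a)$.

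The weight I will use is $\phi(s) := \log a - \log(s-\nh)$, with branch cut along the ray $(-\infty,\nh]$, so that $\phi$ is holomorphic on $D_+\setminus\{\nh\}$ and $\re\phi(s) = \log(a/\abs{s-\nh})$. Writing $\gamma$ for the boundary of $D_+$ oriented counterclockwise, indented by a small semicircle around $\nh$ and with the radius $a$ perturbed slightly so that no resonance lies on the contour, the argument principle applied to $(\tau'/\tau)\cdot\phi$ combined with the identification in the previous paragraph yields
\[
\re\frac{1}{2\pi i}\oint_\gamma\frac{\tau'(s)}{\tau(s)}\phi(s)\,ds = \int_0^a\frac{N_g(t)-N_0(t)}{t}\,dt.
\]

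Next, I will split $\gamma = \gamma_{\rm line}\cup\gamma_{\rm arc}$, where $\gamma_{\rm line}$ runs down the critical line from $\nh+ia$ to $\nh-ia$. On $\gamma_{\rm line}$, the definition \eqref{sigma.def} gives $\tau'(\nh+it)/\tau(\nh+it) = -2\pi\sigma'(t)$, while $\phi(\nh+it) = \log(a/\abs t) - \tfrac{i\pi}{2}\operatorname{sgn}(t)$; since $\sigma$ is real and odd, the $\operatorname{sgn}$ contribution integrates to zero and one integration by parts in the remaining term reduces the real part of the line integral to $2\int_0^a\sigma(t)/t\,dt$. On $\gamma_{\rm arc}$, parameterized by $s = \nh+ae^{i\theta}$ with $\theta\in[-\tfrac\pi2,\tfrac\pi2]$, the identity $d\phi/d\theta = -i$ together with integration by parts (using a continuous branch of $\log\tau$ along the arc) converts the arc integral to $i\int_{-\pi/2}^{\pi/2}\log\tau(\nh+ae^{i\theta})\,d\theta$ plus a purely imaginary boundary term; dividing by $2\pi i$ and taking real part produces $\frac{1}{2\pi}\int_{-\pi/2}^{\pi/2}\log\abs{\tau(\nh+ae^{i\theta})}\,d\theta$, with any branch ambiguity contributing only to the imaginary part.

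The main obstacle will be the technical handling of zeros and poles of $\tau$ near the contour $\gamma$, together with the logarithmic singularity of $\phi$ at $s=\nh$. Pushing divisor points off $\gamma$ forces an $O(1)$ perturbation of the radius $a$, and since each resonance carries logarithmic weight $\log a$, these perturbations combine to produce the $O(\log a)$ error term in the statement. The indentation around $\nh$, and a possible resonance at $\nh$ itself, contribute only $O(1)$ and are absorbed into the same error.
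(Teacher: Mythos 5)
Your structure---argument principle on the half-disk $\{\abs{s-\nh}\le a,\;\re s\ge\nh\}$ with the weight $\phi(s)=\log a-\log(s-\nh)$, divisor identification from Proposition~\ref{tau.divisor}, integration by parts along each piece of the boundary, and using $\abs{\tau(\nh+it)}=1$ to discard the arc boundary term---is exactly the argument the paper invokes by citing \cite[Prop.~3.2]{Borthwick:2010}.

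Your explanation of the $O(\log a)$ error is not right, though. Perturbing the radius by $O(1)$ would move $\sim a^n$ resonances across the contour and change $\int_0^a(N_g-N_0)/t\,dt$ by $O(a^n)$; and no perturbation is needed in the first place, since both sides of the identity are continuous in $a$. The genuine source of the error is the finitely many $\zeta\in\calR_g$ with $\re\zeta>\nh$, i.e.\ resonances arising from discrete $L^2$-eigenvalues of $\Delta_g$. For each such $\zeta$ the right half-disk contains the \emph{pole} of $\tau$ at $\zeta$ rather than the \emph{zero} at $n-\zeta$, so the weighted divisor count of $\tau$ on the right differs from $N_g(a)-N_0(a)$ by $2\sum_{\re\zeta>\nh}m_g(\zeta)\log\bigl(a/\abs{\zeta-\nh}\bigr)$, which is $O(\log a)$ since the sum has finitely many terms. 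At $s=\nh$ itself one uses unitarity of the scattering matrices on the critical line to see that $\tau$ has no zero or pole there, and only then does the indentation contribute $o(1)$; this is not automatic and must be paired against the possible jump of $\sigma$ at $t=0$.
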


The asymptotic for the scattering phase $\sigma(t)$ was given in Corollary~\ref{scphase.cor}.  
Hence for the application of Proposition~\ref{relcount.prop} we must establish the asymptotic for $N_0(t)$ and estimate $\abs{\tau(s)}$ for $\re s \ge \nh$.

\subsection{Asymptotic counting for the model space}
The resonances of the model space were identified explicitly in \eqref{R0.def} as zeros of $I_\nu(\lambda)$, where
$\nu := s-\nh$ and $\lambda^2 \in \sigma(\Delta_h)$.  In this subsection we will use the Bessel function asymptotics from \S\ref{bessel.sec} to work out the constant in the asymptotic that we claimed for the model space counting function $N_0(r)$ in \eqref{N0.bnd}.

Since our Bessel function asymptotics assume that $\re\nu\ge 0$, we will study the zeros through the reflection formula,
\begin{equation}\label{Ireflect2}
I_{-\nu}(\lambda) = I_{\nu}(\lambda) + \frac{2\sin \pi\nu}{\pi} 
K_\nu(\lambda).
\end{equation}
There are two distinct sources of zeros of $I_{-\nu}(\lambda)$.   For $\abs{\nu} \succeq \lambda$, the 
$K_\nu(\lambda)$ term is dominant in \eqref{Ireflect2}.  Thus $I_{-\nu}(\lambda)$ has some zeros which are perturbations of the integer points where $\sin \pi\nu = 0$.  We refer to these as `trivial' zeros, as they are quite easy to count.  Note that because the trivial zeros are perturbations of simple zeros, and the zero set of $I_{-\nu}(\lambda)$ has a conjugation symmetry, the trivial zeros must remain on the real axis.   They can never occur precisely at an integer, however, since $I_{-k}(z) = I_{k}(z)$ for $k\in \bbZ$, which is strictly positive for $z>0$.

The `non-trivial' zeros of $I_{-\nu}(\lambda)$ occur within the red zone shown in Figure~\ref{NuPsiPlot.fig} (and its reflection by conjugation).  Within this zone and away from the real axis, the approximation \eqref{KI.atri} is valid, and the zeros are approximately given by solutions of the equation
\begin{equation}\label{Ai.approx}
\Ai \left(e^{\frac{2\pi i}3}(\tfrac32 \psi)^{2/3}\right) = 0,
\end{equation}
where $\psi = \psi(\nu, \lambda)$, as defined in \eqref{psi.def}.   Recall that $\psi(\nu, \lambda) = \lambda \rho(\alpha)$,  where $\nu = \lambda\alpha$ and
\begin{equation}\label{rhoa.def}
\rho(\alpha) := \sqrt{\alpha^2+1} + \alpha \log \left(\frac{i}{\alpha +
\sqrt{\alpha^2+1}}\right).
\end{equation}
Within the red zone the corresponding values of $\psi$ are close to the positive imaginary axis. Hence we can apply the approximation \eqref{ai.asym2} to \eqref{Ai.approx} to obtain the simpler equation
\begin{equation}\label{cosh.lr}
\cosh\bigl(\lambda\rho(\alpha)-i\tfrac\pi4\bigl) = 0.
\end{equation}
We will count the solutions to \eqref{cosh.lr} that lie on
\[
\gamma := \bigl\{\alpha: \>\re \rho(\alpha) = 0,\,\im\rho(\alpha) \ge 0\bigr\},
\]
and then relate the corresponding counting function to $N_0(r)$.
The curve $\gamma$ is shown in Figure~\ref{gamma.fig}.  Note that the actual resonance lines in Figure~\ref{X0Plot.fig} are well approximated by the reflections of $\gamma$ across the imaginary axis, scaled by the square roots $\lambda$ of the eigenvalues.

\begin{figure}
\begin{center}
\begin{overpic}[scale=.7]{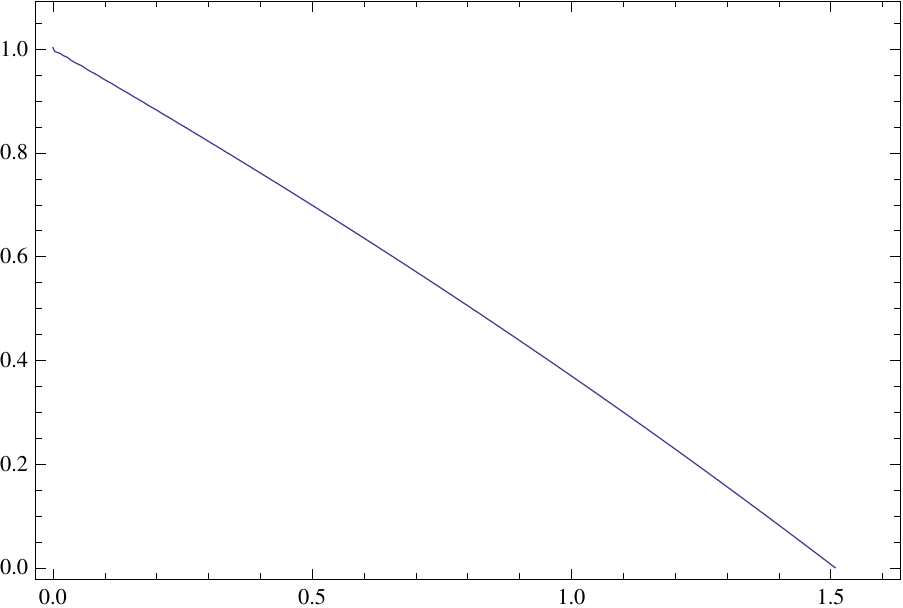}
\put(48,38){$\gamma$}
\end{overpic}
\end{center}
\caption{The curve $\gamma$ containing the solutions of \eqref{cosh.lr}.}\label{gamma.fig}
\end{figure}

Let $W_\Sigma$ denote the Weyl constant for the compact manifold, $(\Sigma, h)$:
\[
W_\Sigma := \frac{\vol(\Sigma,h)}{\Gamma(\frac{n}2+1) (4\pi)^{n/2}},
\]
so that
\[
\#\bigl\{\lambda^2\in\sigma(\Delta_h): \; \lambda\le r\bigr\}\sim W_\Sigma \cdot r^n.
\]

\begin{lemma}\label{auxcount.lemma}
Let $M(r; \theta_1,\theta_2)$ denote the number of zeros of \eqref{cosh.lr} for $\abs{\nu} \le r$, $\arg\nu \in [\theta_1,\theta_2)$, and $\lambda^2 \in \sigma(\Delta_h)\backslash\{0\}$.  For $0 \le \theta_1 <\theta_2 \le \tfrac{\pi}2$ 
this count satisfies the asymptotic
\begin{equation}\label{aux_count}
M(r; \theta_1,\theta_2) = \frac{nW_\Sigma}{(n+1)\pi} \: r^{n+1} \int_{\gamma|_{[\theta_1,\theta_2]}}  \frac{\abs{\rho'(\alpha)}}{\abs{\alpha}^{n+1}}\> \abs{d\alpha} + O(r^{n}),
\end{equation}
where $\gamma$ is parametrized by $\theta = \arg \alpha$.
\end{lemma}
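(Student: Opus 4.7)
The plan is to reduce \eqref{cosh.lr} to a one-dimensional lattice-counting problem along $\gamma$ and then sum over the $\Delta_h$-spectrum via the Weyl law. The equation $\cosh(w-i\pi/4)=0$ has solutions precisely at $w=i\pi(k+3/4)$, $k\in\bbZ$. Substituting $w=\lambda\rho(\alpha)$ and separating real and imaginary parts, a solution must satisfy $\re\rho(\alpha)=0$ (so $\alpha\in\gamma\cup\bar\gamma$) and $\im\rho(\alpha)=\pi(k+3/4)/\lambda$. For $\arg\alpha\in[0,\tfrac{\pi}2]$ one has $\im\rho\ge 0$, so only $k\ge 0$ contribute and the solutions lie on $\gamma$.

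Next, I would count, for each fixed $\lambda$, the solutions on $\gamma|_{[\theta_1,\theta_2]}\cap\{|\alpha|\le r/\lambda\}$. Since $\re\rho\equiv 0$ on $\gamma$, we have $|d(\im\rho)|=|\rho'(\alpha)|\,|d\alpha|$, and $\im\rho$ is monotone along $\gamma$ (from $0$ at the turning point $\alpha=i$ to $\pi\alpha_0/2$ at $\alpha_0$). The number of integers $k\ge 0$ with $\pi(k+3/4)/\lambda$ falling in an interval of $\im\rho$-length $L$ is $\lambda L/\pi + O(1)$. Therefore the count for a single $\lambda$ equals
\[
\frac{\lambda}{\pi}\int_{\gamma|_{[\theta_1,\theta_2]}\cap\{|\alpha|\le r/\lambda\}}|\rho'(\alpha)|\,|d\alpha| + O(1).
\]

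Then I would sum over $\lambda^2\in\sigma(\Delta_h)\setminus\{0\}$. Since $|\alpha|\ge 1$ on $\gamma$, only $\lambda\le r$ contribute to $M(r;\theta_1,\theta_2)$, so by the Weyl law $\#\{\lambda^2\in\sigma(\Delta_h):\lambda\le r\}=W_\Sigma r^n+O(r^{n-1})$ the aggregate of the $O(1)$ errors is $O(r^n)$. For the main term, swap sum and integral by Fubini:
\[
\frac{1}{\pi}\int_{\gamma|_{[\theta_1,\theta_2]}}|\rho'(\alpha)|\,|d\alpha|\sum_{\substack{\lambda^2\in\sigma(\Delta_h)\setminus\{0\}\\\lambda\le r/|\alpha|}}\lambda.
\]
Abel summation against the Weyl asymptotic yields $\sum_{\lambda\le R}\lambda=\tfrac{n W_\Sigma}{n+1}R^{n+1}+O(R^n)$. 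Substituting $R=r/|\alpha|$, the main term becomes $\tfrac{nW_\Sigma}{(n+1)\pi}\,r^{n+1}\int_{\gamma|_{[\theta_1,\theta_2]}}|\rho'(\alpha)|\,|\alpha|^{-(n+1)}\,|d\alpha|$, and the remainder integrates to $O(r^n)$ after noting that $|\alpha|^{-n}$ is bounded on $\gamma$.

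The main obstacle is verifying that the integrals over $\gamma$ are well-behaved at the endpoints and that the Fubini interchange is valid. At the turning point $\alpha=i$ one has $\rho\sim c(\alpha-i)^{3/2}$, so $|\rho'|\sim|\alpha-i|^{1/2}$ vanishes but remains integrable; at $\alpha_0$, $\rho'$ is bounded and nonzero, so no singularity appears on the real-axis end. Since $|\alpha|\ge 1$ on $\gamma$, the weight $|\alpha|^{-(n+1)}$ is bounded, making the $\gamma$-integral finite. With these integrability properties in hand, the interchange of sum and integral and the uniformity of the Weyl remainder when integrated against $|\rho'(\alpha)|\,|\alpha|^{-(n+1)}\,|d\alpha|$ are routine, and assembling the pieces gives \eqref{aux_count}.
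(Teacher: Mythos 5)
Your argument is correct and follows essentially the same route as the paper's proof: both identify the zeros of \eqref{cosh.lr} with the lattice $\im\rho(\alpha)\in\tfrac{\pi}{\lambda}(\bbN-\tfrac14)$ along $\gamma$, count per-$\lambda$ with a uniformly bounded error, and sum over the spectrum via Abel summation against the Weyl law. The only organizational difference is that the paper introduces the auxiliary parametrization $\tilde\gamma(t)$ with $\rho(\tilde\gamma(t))=i\pi t$ and estimates the resulting $\tilde M$ by a $\Delta\theta$-discretization before passing to the integral, whereas your direct Fubini interchange of the $\lambda$-sum with the $\gamma$-integral reaches the same expression more compactly; the justifications you give (integrability of $|\rho'(\alpha)|\,|\alpha|^{-(n+1)}$ on the compact arc $\gamma$, boundedness of $|\alpha|^{-n}$, $\lambda\le r$ so $O(r^n)$ aggregate error) are exactly what is needed.
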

\begin{proof}
For any $\lambda$ the zeros of \eqref{cosh.lr} with $\re\alpha>0$ lie on the curve $\gamma$.  Note that the zeros of \eqref{cosh.lr} with $\re\alpha = 0$ are not included in the count $M(r; \theta_1,\tfrac\pi2)$. 
As an alternative parametrization of $\gamma$, define $\tilde{\gamma}(t)$ implicitly by
\[
\rho(\tilde{\gamma}(t)) = i\pi t,
\]
for $t \in [0, \alpha_0/2]$.  The constant $\alpha_0 \approx 1.509$ is the value of $\alpha$ at which the curve $\gamma$ intersects the real axis.  This determines the range of $t$ since $\rho(\alpha_0) = \frac{i\pi}2 \alpha_0$.

For $0 \le \theta_1 <\theta_2 \le \tfrac{\pi}2$, let $t_1$ and $t_2$ be the corresponding parameters so that 
$\gamma(\theta_j) = \tilde{\gamma}(t_j)$.  For fixed 
$\lambda$, the number of zeros of \eqref{cosh.lr} with $\arg\alpha \in [\theta_1, \theta_2)$ is given exactly by
the number of points in $\lambda(t_2, t_1] \cap (\bbN - \tfrac14)$.  We can thus estimate the number of zeros in this range as \begin{equation}\label{lambda.t.error}
\lambda (t_1-t_2) + O(1),
\end{equation}
where the error term is bounded by $\pm1$.

Now consider the full count, summed over $\lambda$.  The number of $\lambda$'s for which $\gamma$ intersects $\{\abs{\alpha} \le r/\lambda\}$ is $O(r^n)$ by the Weyl law, so that by applying \eqref{lambda.t.error} for each $\lambda$ and summing the errors we obtain
\begin{equation}\label{M.approx}
M(r; \theta_1, \theta_2) =  \tilde{M}(r; \theta_1, \theta_2) + O(r^n),
\end{equation}
where
\[
\tilde{M}(r; \theta_1, \theta_2) := \sum_\lambda \lambda \, \ell\Big(\tilde{\gamma}^{-1} \Big[ \gamma|_{[\theta_1, \theta_2]}\cap \{\abs\alpha\le\tfrac{r}{\lambda}\} \Big]\Big).
\]

Fix some $\theta$ and small $\Delta\theta$, and define $t$ and $\Delta t$ by $\tilde{\gamma}(t)=\gamma(\theta)$ and $\tilde{\gamma}(t-\Delta t)=\gamma(\theta+\Delta\theta)$.   Then we can estimate
\[
\abs{\tilde{M}(r; \theta, \theta+\Delta\theta)  - \sum_{\lambda\abs{\gamma(\theta)} \le r} \lambda \Delta t \>}
\le \sum_{r/\abs{\gamma}_{\rm max}\le\lambda\le r/\abs{\gamma}_{\rm min}} \lambda\,\Delta t
,\]
where the extrema of $\abs{\gamma}$ are taken over the sector $\arg\alpha\in[\theta,\theta+\Delta\theta]$.
Since $\abs{\gamma}_{\rm max} - \abs{\gamma}_{\rm min}$ is bounded by $c\Delta\theta$, the Weyl law implies that the number of 
terms in the sum on the right hand side of the above inequality is $O(r^n\Delta\theta)$. 
Hence we can write
\[
\tilde{M}(r; \theta, \theta+\Delta\theta) = \sum_{\lambda \le r/\abs{\gamma(\theta)}} \lambda \Delta t \quad 
+ O(r^{n+1} \Delta t \Delta\theta).
\]
By the Weyl law,
\[
\sum_{\lambda \le r/\abs{\gamma(\theta)}} \lambda =  \frac{nW_\Sigma}{n+1} \left( \frac{r}{\abs{\gamma(\theta)}}\right)^{n+1} + O(r^{n}).
\]
Since $\abs{\Delta t} \le c \abs{\Delta\theta}$, we conclude
\begin{equation}\label{tM.Weyl}
\tilde{M}(r; \theta, \theta+\Delta\theta) = \frac{nW_\Sigma}{n+1} \left( \frac{r}{\abs{\gamma(\theta)}}\right)^{n+1} \Delta t 
+ O(r^n \Delta t)
+ O(r^{n+1}(\Delta\theta)^2)
\end{equation}

We now pass to an integral over $\theta$ and derive an estimate for $\tilde{M}(r; \theta_1,\theta_2)$ from \eqref{tM.Weyl}.
Then from \eqref{M.approx} we obtain
\[
M(r; \theta_1,\theta_2) = \frac{nW_\Sigma}{n+1} r^{n+1} \int_{\gamma|_{[\theta_1,\theta_2]}}  \frac{dt}{\abs{\gamma}^{n+1}}
+ O(r^{n}).
\]
The final step is to note that $(\rho\circ\tilde{\gamma})'(t) = i\pi$, 
so that the change of variables from $t$ to arclength is accounted for by introducing a factor of $\abs{\rho'}/\pi$.
\end{proof}

\bigbreak
\begin{prop}\label{modelcount.prop}
The resonance counting function $N_0$ for the model space $X_0$ satisfies the asymptotic
\begin{equation}\label{model_count}
N_0(r) = \Bigg[ \frac{2n W_\Sigma}{(n+1)\pi} \int_{\gamma}
\frac{\abs{\rho'(\alpha)}}{\abs{\alpha}^{n+1}} \: d\abs\alpha 
+ \frac{W_\Sigma}{n+1} \alpha_0^{-n} \Bigg] r^{n+1} + O\big(r^{n+\frac13}\big),
\end{equation}
where $W_\Sigma$ is the Weyl constant for $(\Sigma,h)$, $\gamma=\{\alpha: \>\re \rho(\alpha) = 0,\,\im\rho(\alpha) \ge 0\}$, and $\alpha_0$ is the real solution to $\re\rho(\alpha)=0$.
\end{prop}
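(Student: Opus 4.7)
The plan is to decompose the problem of counting zeros of $I_{-\nu}(\lambda)$, summed over $\lambda^2\in\sigma(\Delta_h)\setminus\{0\}$, into two regimes. By the substitution $\nu\mapsto-\nu$ I identify $\calR_0$ (which lies in $\re s<\nh$, so $\re\nu<0$) with the zero set of $I_{-\nu}(\lambda)$ for $\re\nu>0$. The conjugation symmetry $I_{-\bar\nu}(\lambda)=\overline{I_{-\nu}(\lambda)}$ then lets me restrict attention to $\arg\nu\in[0,\tfrac{\pi}{2}]$ and double the resulting count.

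For each fixed $\lambda$, I divide the quarter-disk $\{\abs\nu\le r,\,\arg\nu\in[0,\tfrac{\pi}{2}]\}$ according to the position of $\psi(\nu,\lambda)$. The non-trivial zeros, lying in the red zone of Figure~\ref{NuPsiPlot.fig}, are approximately on the scaled curve $\lambda\gamma$, while the trivial zeros accumulate near the positive integers on the real axis beyond $\lambda\alpha_0$. The former are enumerated via Lemma~\ref{auxcount.lemma} (after doubling by conjugation); the latter by elementary Bessel estimates.

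For the non-trivial zeros I would use the Airy representation \eqref{KI.atri} together with the asymptotic \eqref{ai.asym2}, combined with Rouché's theorem applied to each candidate zero of the $\cosh$ equation \eqref{cosh.lr}, to show that the exact zeros of $I_{-\nu}(\lambda)$ in the red zone are in bijection with the zeros of \eqref{cosh.lr}. Lemma~\ref{auxcount.lemma} and doubling then yield
\[
2\,M(r;0,\tfrac\pi2)=\frac{2nW_\Sigma}{(n+1)\pi}\,r^{n+1}\int_\gamma \frac{\abs{\rho'(\alpha)}}{\abs{\alpha}^{n+1}}\,\abs{d\alpha}+O(r^n),
\]
which is the first term of \eqref{model_count}. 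For the trivial zeros, the estimates \eqref{I.ntest}--\eqref{K.ntest} show that on the positive real axis with $\nu>\lambda\alpha_0$ one has $I_{-\nu}(\lambda)=(2\sin\pi\nu/\pi)\,K_\nu(\lambda)[1+O(e^{-c\lambda})]$. Since $K_\nu>0$ and $I_{-k}=I_k>0$ for integers $k$, $I_{-\nu}$ has exactly one simple zero near each positive integer $k\in[\lambda\alpha_0,r]$. Summing via Abel summation and the Weyl law $\#\{\lambda\le t\}=W_\Sigma t^n+O(t^{n-1})$ for $\Sigma$ gives
\[
\sum_{\lambda\le r/\alpha_0}(r-\lambda\alpha_0)=\frac{W_\Sigma\,\alpha_0^{-n}}{n+1}\,r^{n+1}+O(r^n),
\]
which is the second term.

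The hard part will be sharpening the error from $O(r^n)$ to the claimed $O(r^{n+1/3})$. The genuinely delicate region is the turning-point neighborhood $\nu\approx i\lambda$ together with its descent to $\lambda\alpha_0$, where the clean $\cosh$ asymptotic breaks down and, by Corollary~\ref{turn.cor}, the Bessel functions scale as $\lambda^{-1/3}$. In this zone the Rouché argument must be implemented on disks of radius $\asymp\lambda^{-1/3}$, and the interpolation between the full Airy form of $I_{-\nu}/I_\nu$ (valid for $\abs\psi\le c$) and its $\cosh$ reduction (valid for $\im\psi\ge 0$, $\abs{\re\psi}\le b$) must be performed carefully to match each exact zero to its approximation counterpart without loss or duplication. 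Summing the turning-point contributions across $\lambda\le r$ via the Weyl law produces the stated $O(r^{n+1/3})$ remainder, which dominates the $O(r^n)$ contributions from both main counts.
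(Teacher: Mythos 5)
Your overall decomposition matches the paper's: count the non-trivial zeros of $I_{-\nu}(\lambda)$ in the red zone by Rouch\'e's theorem combined with Lemma~\ref{auxcount.lemma}, count the trivial (real) zeros separately via the reflection formula and Weyl's law, and double by conjugation symmetry. The leading constant and the trivial-zero computation both come out correctly in your sketch.

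However, your account of the error term $O(r^{n+1/3})$ — which is the technical heart of the proposition — is not right, on two counts. First, a scaling error: near the turning point $\nu=i\lambda$ the relation $\psi\asymp\lambda^{-1/2}(\nu-i\lambda)^{3/2}$ shows the Rouch\'e contours (on which $\psi$ ranges over a fixed $O(1)$ window) have diameter $O(\lambda^{+1/3})$ in the $\nu$-plane, not radius $\lambda^{-1/3}$. Second, and more importantly, the $O(r^{n+1/3})$ does \emph{not} come from ``summing the turning-point contributions across $\lambda\le r$''. For each $\lambda$, the zeros of $I_{-\nu}(\lambda)$ in the excised neighborhood of the turning point are uniformly bounded in number (the Airy argument there is bounded), so summing over $\lambda\le r$ gives only $O(r^n)$. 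The $r^{n+1/3}$ instead arises because Rouch\'e locates each exact zero only to within a contour of diameter at most $c\lambda^{1/3}\le cr^{1/3}$, so the count of zeros with $\abs\nu\le r$ is sandwiched between $M(r-cr^{1/3};0,\tfrac\pi2)$ and $M(r+cr^{1/3};0,\tfrac\pi2)+O(r^n)$; since $M\sim Cr^{n+1}$, these differ by $O(r^{n+1/3})$. Relatedly, your plan to push the Rouch\'e comparison into the turning-point neighborhood by ``interpolating'' between the Airy and $\cosh$ forms is unnecessary and harder than what is actually required: the approximation \eqref{II.approx} has relative error bounded by $O(\psi^{-1})+O(\lambda^{-1})+e^{-2\pi\im\nu}$, and one should simply excise the region $\re\nu<\sigma(\im\nu)^{1/3}$ (with $\sigma$ large so $\abs\psi$ is bounded below there), do Rouch\'e on the complement, and bound the excised zeros crudely.
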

\begin{proof}
From Lemma \ref{Iratio.lemma} we know that the non-trivial zeros of $I_{-\nu}(\lambda)$ in $\im\nu\ge0$ and for $\lambda$ sufficiently large are contained in the region
\[S_{\lambda,b}=\{\arg\nu\in[0,\tfrac\pi2]:\; \im\psi\ge0,\abs{\re\psi}\le b\}.\]
From Corollary \ref{IK.cor} and \eqref{I.reflect} we deduce that for $\psi$ and $\lambda$ sufficiently large we have
\begin{equation}\label{II.approx}
 \frac{I_{-\nu}(\lambda)}{I_\nu(\lambda)}-(1+ie^{-2\psi})
=ie^{-2\psi}\bigr[O(\psi^{-1})+O(\lambda^{-1})-e^{2i\pi\nu}\bigl],
\end{equation}
for $\nu\asymp\lambda$. Note that the zeros of the function
\[1+ie^{-2\psi} = 2 e^{-\psi + i\frac{\pi}4} \cosh\bigl(\psi-i\tfrac\pi4\bigl),\]
in \eqref{II.approx} correspond precisely to the solutions of \eqref{cosh.lr}.

We will now prepare to apply Rouch\'e's theorem to the functions on the left hand side of \eqref{II.approx}.
Let $\nu_{\lambda,m}$ denote the solution of \eqref{cosh.lr} for which
\[\psi(\nu_{\lambda,m})=i\pi (m - \tfrac14),\]
with $m \in \bbN \cap [\frac14,\frac14 + \frac{\lambda\alpha_0}{2}]$. Define $\gamma_{\lambda,m}$
to be the contour obtained from the lines
\[
\{\re\psi=b\},\quad\{\re\psi=-b\},\quad\{\im\psi=\pi(m-\tfrac{3}{4})\},\quad\{\im\psi=\pi(m+\tfrac{1}{4})\},
\]
in the $\nu$-plane. Then each $\nu_{\lambda,m}$ lies within $\gamma_{\lambda,m}$, and further we have that on $\gamma_{\lambda,m}$
\[\abs{e^{-2\psi}}\le\beta\abs{1+ie^{-2\psi}},\]
where the constant $\beta$ depends only on $b$.

In order to control the right hand side of \eqref{II.approx} we define for some $\sigma, \tau>0$ the region
\[\Gamma_{\sigma,\tau} := \left\{\nu:\>\im\nu\ge\tau,\> \re\nu \ge \sigma (\im \nu)^{1/3}\right\}.\]
Recalling from the proof of Corollary \ref{turn.cor}
that for some small $\delta$
\[ \psi\asymp\lambda^{-\tfrac12}(\nu-i\lambda)^{\tfrac32} \qquad \text{for~} \abs{\nu-i\lambda}<\delta\lambda
,\]
we see that by letting both $\sigma$ and $\tau$ be large enough, \eqref{II.approx} yields
\[ \abs{\frac{I_{-\nu}(\lambda)}{I_\nu(\lambda)}-(1+ie^{-2\psi})}
< \frac1\beta \abs{e^{-2\psi}},\]
on $S_{\lambda,b}\cap\Gamma_{\sigma,\tau}$ for $\lambda$ sufficiently large.

Rouch\'e's theorem now implies that for $\lambda$ sufficiently large, $I_{-\nu}(\lambda)$ has exactly one zero within every $\gamma_{\lambda,m}$ that is contained in $\Gamma_{\sigma,\tau}$. Also, there are no other zeros since the contours $\gamma_{\lambda,m}$ cover the regions $S_{\lambda,b}$. The diameters of the $\gamma_{\lambda,m}$ are $O(\lambda^{1/3})$ with a constant that depends only on $b$. Consequently,
\[ 
\#\left\{ \zeta\in\calR_0:\> \abs{\zeta-\nh} \le r,\>-\zeta \in \Gamma_{\sigma,\tau} \right\} \le M(r+cr^{1/3};0,\tfrac\pi2) + O(r^n).
\]
After noting that $M(r-cr^{1/3};\theta_1,\theta_2)$ provides a lower bound for all $0<\theta_1<\theta_2<\tfrac\pi2$, we conclude
\begin{equation}\label{Gcount}
\#\left\{ \zeta\in\calR_0:\> \abs{\zeta-\nh} \le r,\>-\zeta \in \Gamma_{\sigma,\tau} \right\} = M(r;0,\tfrac\pi2) + O\big(r^{n+\tfrac13}\big).
\end{equation}

For each $\lambda$ that is sufficiently large, the number of zeros of $I_{-\nu}(\lambda)$ in $S_{\lambda,b}-\Gamma_{\sigma,\tau}$ is uniformly bounded by a constant depending only on $\sigma$ and on $\tau$. This shows that the number of non-trivial zeros of $I_{-\nu}(\lambda)$ with 
$\abs{\nu} \le r$, $\arg\nu\in[0,\tfrac\pi2]$, and $\nu \notin\Gamma_{\sigma,\tau}$, is $O(r^n)$. 
Therefore
\[
\#\left\{ \zeta\in\calR_0:\> \abs{\zeta-\nh} \le r,\>\im \zeta \ne 0 \right\} = 2M(r;0,\tfrac\pi2) + O\big(r^{n+\tfrac13}\big).
\]

It remains to show that the contribution of the trivial zeros to the counting function is given by the second term in the constant claimed in \eqref{model_count}.   There is a positive constant $c$ such that for $\lambda$ sufficiently large and real $\nu \ge \lambda (1-\vep)\alpha_0$ we have
\[
I_{-\nu}(\lambda) = \frac{2}{\pi} K_\nu(\lambda) \Bigl(\sin \pi \nu + O\big(e^{-2c(\nu-\lambda\alpha_0)}\big)\Bigr).
\]
Hence
\[
\#\left\{ \zeta\in\calR_0:\> \abs{\zeta-\nh} \le r,\>\im \zeta = 0 \right\}  = \sum_{\lambda: \,\lambda \alpha_0 \le r} (r - \lambda \alpha_0) + O(1).
\]
This expression is easily estimated using the Weyl law for $\{\lambda\}$.  We conclude that
\begin{equation}\label{N0.zero}
\#\left\{ \zeta\in\calR_0:\> \abs{\zeta-\nh} \le r,\>\im \zeta = 0 \right\} = \frac{W_\Sigma}{n+1} \alpha_0^{-n} r^{n+1} + O(r^n).
\end{equation}
\end{proof}

From the proof of Proposition~\ref{modelcount.prop} we observe that in the model case we have a resonance-free region with boundary given by a cube-root: for some small $\sigma$,
\[
\calR_0 \cap \left\{s\in\bbC:\> \re(s-\nh) \ge -\sigma \abs{\im s}^{1/3}\right\} = \emptyset.
\]

\subsection{Estimate of the scattering determinant}
The goal of this section is to find an upper bound for $\log\abs{\tau(s)}$ with $s$ in a
sufficiently big subset of $\{\re s > \nh, \im s \ge 0 \}$. 

For some small $\eta>0$, let $x_j = 1 - \eta j$ for $j=0,1,2,3$.  We choose cutoff functions $\chi_j \in \cinf((0,1])$ so that
$\chi_j(x) = 1$ for $x \ge x_j$ and $\chi_j(x) = 0$ for $x \le x_{j+1}$. 
With the model Poisson operator $E_0(s)$ defined as in \S\ref{poisson.op.sec}, we can express the relative scattering determinant as
\[
\tau(s) = \det\left(1+(2s-n)E_0(s)^t [\Delta_0,\chi_2] R(s) [\Delta_0,\chi_1] E_0(n-s)\right).
\]
Since the derivations of this identity and of Lemma \ref{prop1.tau} follow \cite[Lemmas~4.1~and~5.2]{Borthwick:2010} closely, we omit the details.  
\begin{lemma}\label{prop1.tau}
For $\re s \ge \nh$ with $d(s(n-s), \sigma(\Delta_g))\ge \vep$, the relative scattering determinant can be estimated by
\begin{equation}
\label{est.tau}
\log\abs{\tau(s)} \le \sum_{\lambda^2 \in \sigma(\Delta_h)} \log \bigl( 1+ C \kappa_\lambda(s)\bigr),
\end{equation}
where \[
\kappa_\lambda^2(s)= |2s-n|^2 \int_{x_2}^{x_1} x^{-(n+1)} |b_{\lambda}(n-s,x)|^2 \:
dx \int_{x_3}^{x_2} x^{-(n+1)} |b_{\lambda}(s,x)|^2 \: dx,
\]
with the coefficients $b_\lambda(s;x)$ as defined in \eqref{bl.def}, and where the constant $C$ depends only on $\eta$ and $\vep$.  
\end{lemma}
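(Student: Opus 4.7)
The plan is to combine three ingredients: the warped-product mode decomposition, Sylvester's determinant identity, and the Weyl singular value bound for Fredholm determinants.  Starting from the identity $\tau(s)=\det(1+K)$ with
\[
K=(2s-n)\,E_0(s)^t[\Delta_0,\chi_2]\,R(s)\,[\Delta_0,\chi_1]\,E_0(n-s)
\]
recalled above, the goal is to produce a bound on the singular values $\mu_k(K)$ that distributes symmetrically over the Bessel coefficients $b_\lambda$.

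First I would exploit the fact that the cutoffs $\chi_j=\chi_j(x)$ are radial, so $[\Delta_0,\chi_j]$ commutes with $\Delta_h$ and preserves the eigenspace decomposition of $L^2(\Sigma)$ into the $\phi_\lambda$ modes.  The model Poisson operator and its transpose also act diagonally in this basis via the coefficients $b_\lambda(s;x)$ of \eqref{bl.def}.  Writing these mode-diagonal restrictions in polar form,
\[
E_0(n-s)\big|_{[x_2,x_1]\times\Sigma}=U_\beta D_\beta,\qquad E_0(s)^t\big|_{[x_3,x_2]\times\Sigma}=D_\alpha U_\alpha^*,
\]
where $D_\alpha,D_\beta$ are the positive diagonal operators on $L^2(\Sigma)$ with $\phi_\lambda$-eigenvalues
\[
\tilde\alpha_\lambda:=\bigg(\int_{x_3}^{x_2}|b_\lambda(s;x)|^2\,x^{-(n+1)}\,dx\bigg)^{1/2},\quad \tilde\beta_\lambda:=\bigg(\int_{x_2}^{x_1}|b_\lambda(n-s;x)|^2\,x^{-(n+1)}\,dx\bigg)^{1/2},
\]
and $U_\alpha,U_\beta$ are partial isometries, I obtain the factorization
\[
K=(2s-n)\,D_\alpha\,\tilde T\,D_\beta,\qquad \tilde T:=U_\alpha^*[\Delta_0,\chi_2]R(s)[\Delta_0,\chi_1]U_\beta.
\]
Observe that $|2s-n|\,\tilde\alpha_\lambda\tilde\beta_\lambda=\kappa_\lambda(s)$ by definition.

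Since $D_\alpha$ and $D_\beta$ are both diagonal in the $\phi_\lambda$ basis, they commute, and Sylvester's identity $\det(1+AB)=\det(1+BA)$ yields $\det(1+K)=\det(1+(2s-n)\tilde T D_\alpha D_\beta)$.  The operator $D_\alpha D_\beta$ is diagonal with eigenvalues $\tilde\alpha_\lambda\tilde\beta_\lambda$, each counted with the multiplicity of $\lambda^2$ as an eigenvalue of $\Delta_h$, so $\mu_k(D_\alpha D_\beta)$ equals the $k$-th largest element of this multiset, which I denote $\kappa_{(k)}(s)/|2s-n|$.  The basic inequality $\mu_k(AB)\le\|A\|\mu_k(B)$ then gives $\mu_k((2s-n)\tilde T D_\alpha D_\beta)\le\|\tilde T\|\,\kappa_{(k)}(s)$, and Weyl's determinant inequality $\log|\det(1+T)|\le\sum_k\log(1+\mu_k(T))$ delivers
\[
\log|\tau(s)|\le\sum_k\log\!\bigl(1+\|\tilde T\|\,\kappa_{(k)}(s)\bigr)=\sum_{\lambda^2\in\sigma(\Delta_h)}\log\!\bigl(1+C\kappa_\lambda(s)\bigr),
\]
after reindexing the sorted sum as a sum over eigenvalues counted with multiplicity.

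The main obstacle is establishing the uniform operator bound $\|\tilde T\|\le C_{\eta,\vep}$, independent of both $s$ and the mode indices.  Because $\supp\chi_2'\subset[x_3,x_2]$ and $\supp\chi_1'\subset[x_2,x_1]$ are disjoint, the Schwartz kernel of $[\Delta_0,\chi_2]R(s)[\Delta_0,\chi_1]$ arises from first-order differential operators in $x$ applied to \emph{off-diagonal} values of the resolvent kernel, which are smooth and remain controlled by $\|R(s)\|_{L^2\to L^2}\le\vep^{-1}$ together with the $C^\infty$ norms of $\chi_j'$ (governed by $\eta$).  This is precisely the analytic input worked out in \cite[Lemmas~4.1,~5.2]{Borthwick:2010}, to which the excerpt defers; once it is granted, the factorization--Sylvester--Weyl scheme above is purely algebraic.
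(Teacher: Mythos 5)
Your algebraic skeleton --- mode-diagonalize the model pieces, polar-decompose the Poisson factors, use Sylvester's identity to bring the diagonal parts together, and finish with $\mu_k(\tilde T D_\alpha D_\beta)\le\|\tilde T\|\mu_k(D_\alpha D_\beta)$ and Weyl's determinant inequality --- is a sensible route to an estimate of exactly the shape \eqref{est.tau}, and it correctly reproduces $|2s-n|\tilde\alpha_\lambda\tilde\beta_\lambda=\kappa_\lambda(s)$.  The crucial point that $D_\alpha$ and $D_\beta$ are \emph{simultaneously} diagonal in the $\phi_\lambda$ basis (so that their product pairs $\tilde\alpha_\lambda$ with the \emph{same-}$\lambda$ $\tilde\beta_\lambda$, rather than with independently sorted singular values) is handled correctly.

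The gap is the operator-norm claim $\|\tilde T\|\le C_{\eta,\vep}$, which is the analytic heart of the lemma and is asserted rather than argued.  Your justification --- disjoint supports of $\chi_1'$ and $\chi_2'$, $\|R(s)\|_{L^2\to L^2}\le\vep^{-1}$, and bounds on $C^\infty$ norms of the cutoffs --- is not enough.  The commutators $[\Delta_0,\chi_j]$ are first-order, so $[\Delta_0,\chi_2]R(s)[\Delta_0,\chi_1]$ effectively applies two derivatives to the off-diagonal resolvent kernel; even with disjoint cutoff supports, the operator norm of this composition grows in $s$.  Compare Proposition~\ref{R0est.prop}: for disjoint $\chi_1,\chi_2$ one gets $\|\chi_1 R_0(s)\chi_2\|_{L^2\to H^\sigma}\le C\brak{s}^{-1+\sigma}$, which at $\sigma=2$ (the order needed to absorb two derivatives) is $\brak{s}$, not $O(1)$.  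Since $\|U_\alpha^*\|,\|U_\beta\|\le 1$ only give $\|\tilde T\|\le\|[\Delta_0,\chi_2]R(s)[\Delta_0,\chi_1]\|$, your bound would a priori yield a constant that blows up like a power of $\brak{s}$, which is not what \eqref{est.tau} says.  To salvage this, you would need to show that sandwiching between the partial isometries $U_\alpha,U_\beta$ actually kills the $s$-growth --- that is, to exploit that the ranges of $U_\alpha,U_\beta$ are spanned by the $\phi_\lambda$-modes of cut-off generalized eigenfunctions.  For instance, $[\Delta_0,\chi_1]U_\beta\phi_\lambda$ can be rewritten using $(\Delta_0-s(n-s))b_\lambda(n-s;\cdot)=0$, which reduces $R(s)[\Delta_0,\chi_1]U_\beta\phi_\lambda$ to a boundary-layer term at $\{x=1\}$ after the $\chi_1 b_\lambda\phi_\lambda$ bulk term is killed by $[\Delta_0,\chi_2]$.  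That identity is likely what makes the final bound $s$-uniform, and it is a genuine cancellation, not just a norm count.  Appealing to \cite[Lemma~5.2]{Borthwick:2010} is legitimate --- the paper itself defers there --- but you then owe the reader the check that the quantity that reference bounds really is $\|\tilde T\|$ as defined in your factorization; it may instead directly bound the Schwartz kernel of $K$, in which case the polar-decomposition framing would have to be re-matched.
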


Using the identity 
\[E_0(s)=-E_0(n-s)S_0(s),\]
we find
\begin{equation}\label{kappa.k}
\kappa_\lambda(s) \le \abs{\frac{I_\nu(\lambda)}{I_{-\nu}(\lambda)}} 
\abs{\,\nu\, \Bigl(\frac{\lambda}{2}\Bigr)^{-2\nu} \frac{\Gamma(\nu)}{\Gamma(-\nu)}}
 \int_{x_3}^{x_1} x^{-(n+1)} \abs{b_{\lambda}(s,x)}^2 \, dx
,\end{equation}
for $k>0$. Define the following set of radii $a$, for
which the corresponding circles stay away from the zeros of the scattering matrix in the sense
of Proposition \ref{S0.prop}.
\[
\Lambda:=\bigl\{a\in \bbR_+ :\>\min_\theta d(ae^{i\theta},\nh-\calR_0)\ge \brak{a}^{-\beta}, 
d(a,\bbN_0) \ge \delta \bigr\},
\]
where $\beta > n+1$ and $\delta>0$.
Then, for $\abs\nu\in \Lambda$, we have control of $I_\nu/I_{-\nu}(\lambda)$ by \eqref{S0.Ifactor}.
The requirement for Lemma \ref{prop1.tau}, that $d(s(n-s),\sigma(\Delta_g))\ge \vep$, will be satisfied if
$\abs\theta\le\frac\pi2-\vep a^{-2}$ for $\nu=ae^{i\theta}$ with $a$ sufficiently large. Under these two restrictions we obtain:
\begin{prop}\label{prop2.tau}
 For $a \in \Lambda$ we have
 \[
  \log\abs{\tau(\nh+ae^{i\theta})}\le B(\theta) a^{n+1} + o(a^{n+1})
 ,\]
uniformly for $\abs\theta\le\frac\pi2-\vep a^{-2}$, with
\[
B(\theta) := 2nW_\Sigma\int_0^\infty \frac{[-\re \rho(xe^{i\abs\theta})]_+}{x^{n+2}} \: dx
,\]
where $W_\Sigma$ is the Weyl constant for $\Delta_h$ and $[\cdot]_+$ denotes the positive part.
\end{prop}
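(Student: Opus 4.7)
My plan begins with Lemma~\ref{prop1.tau}, which reduces the bound on $\log\abs{\tau(s)}$ to controlling a sum $\sum_{\lambda^2\in\sigma(\Delta_h)}\log(1+C\kappa_\lambda(s))$ over the spectrum of $\Delta_h$. I would estimate each $\kappa_\lambda(s)$ by substituting the explicit formula \eqref{bl.IK} for $b_\lambda(s,x)$ together with the $K_\nu$-asymptotic from Corollary~\ref{IK.cor}. Since $\re\psi(\nu,\lambda x)$ is increasing in $x$ for $\re\nu\ge 0$, the $K_\nu(\lambda x)$ term dominates the bracket in \eqref{bl.IK} throughout $[x_3,x_1]$, and the integrand $|b_\lambda|^2$ attains its maximum at $x=x_3$; the integral in \eqref{kappa.k} is then bounded by $O(\eta)$ times the integrand at $x_3$.

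The next step is a clean algebraic collection of the power-of-$\lambda$ and Gamma prefactors. Using $\Gamma(1+\nu)=\nu\Gamma(\nu)$ and Euler's reflection formula one finds
\[
\nu\left(\tfrac{\lambda}{2}\right)^{-2\nu}\frac{\Gamma(\nu)}{\Gamma(-\nu)}\left(\frac{(\lambda/2)^\nu}{\Gamma(\nu+1)}\right)^{\!2} = -\frac{\sin\pi\nu}{\pi},
\]
which eliminates both the $\lambda$- and the $\Gamma$-dependence. Combining with Corollary~\ref{IK.cor} and the easily checked fact that $\abs{\sin\pi\nu}\,\abs{i^\nu}^2\preceq 1$ for $\im\nu\ge 0$ (the $e^{\pi\im\nu}$ growth of $\sin\pi\nu$ is cancelled by $\abs{i^\nu}^2=e^{-\pi\im\nu}$), one obtains
\[
\kappa_\lambda(s)\;\preceq\; \left|\frac{I_\nu(\lambda)}{I_{-\nu}(\lambda)}\right|\,\frac{\eta}{\abs{\nu+i\lambda x_3}}\,e^{-2\re\psi(\nu,\lambda x_3)}.
\]
Because $a\in\Lambda$, the estimate \eqref{S0.Ifactor} gives $\abs{I_\nu/I_{-\nu}}\preceq 1$ outside the red zone of Figure~\ref{NuPsiPlot.fig}, and only the weaker bound $\exp(O(\abs{\nu}\log\abs{\nu}))$ inside it.

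Exploiting the scaling identity $\rho(\alpha,x)=x\,\rho(\alpha/x)$, so that $\re\psi(\nu,\lambda x_3)=\lambda x_3\,\re\rho(\alpha/x_3)$ with $\alpha=\nu/\lambda$, one deduces
\[
\log(1+C\kappa_\lambda(s))\;\le\; 2\lambda x_3\,\bigl[-\re\rho(\alpha/x_3)\bigr]_+ + O(\log\brak{\nu})
\]
outside the red zone. For each fixed $\theta$, the ray $\arg\alpha=\theta$ crosses the curve $\{\re\rho=0\}$ transversally, so the red zone captures only $O(1)$ values of $\lambda$ and contributes $O(a\log a)=o(a^{n+1})$ provided $n\ge 1$. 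Applying the Weyl law for $(\Sigma,h)$ and changing variables via $x=a/(\lambda x_3)$ turns the main sum into
\[
\frac{2nW_\Sigma}{x_3^n}\,a^{n+1}\int_0^\infty \frac{[-\re\rho(xe^{i\abs\theta})]_+}{x^{n+2}}\,dx + o(a^{n+1}).
\]
Letting $\eta\to 0$ slowly with $a$ (for instance $\eta=a^{-\gamma}$ for small $\gamma>0$) drives $x_3^{-n}\to 1$ and yields $B(\theta)a^{n+1}+o(a^{n+1})$.

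The hard part, I expect, will be to make the various error bounds uniform in $\theta$ up to the near-resonant boundary $\abs\theta=\tfrac\pi2-\vep a^{-2}$. The turning point $\alpha=i$ is exactly where the asymptotic expansions in Corollary~\ref{IK.cor} degenerate and where \eqref{S0.Ifactor} takes its weakest form; moreover the constant in Lemma~\ref{prop1.tau} depends on $\eta$, and balancing $\eta\downarrow 0$ against the logarithmic errors requires care. The hypothesis $a\in\Lambda$, in particular $d(s,\tfrac{n}{2}-\calR_0)\ge\brak{s}^{-\beta}$, is essential throughout: without it the ratio $\abs{I_\nu/I_{-\nu}(\lambda)}$ would blow up at the zeros of $I_{-\nu}(\lambda)$ and the pointwise estimate would fail.
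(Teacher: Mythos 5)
Your strategy is essentially the paper's: reduce via Lemma~\ref{prop1.tau}, use the explicit $b_\lambda$ formula with the uniform Airy-type Bessel asymptotics, replace $\abs{I_\nu/I_{-\nu}}$ by the estimates of Lemmas~\ref{Iratio.lemma}--\ref{Iratio.z}, then sum via Weyl's law and release the cutoff width $\eta$. The algebraic cancellation of the Gamma and $(\lambda/2)^\nu$ prefactors into $-\sin\pi\nu/\pi$, and the observation that $\abs{\sin\pi\nu}\abs{i^\nu}^2\preceq 1$ for $\im\nu\ge 0$, are correct and match what the paper does implicitly.

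However, there is a genuine error in the treatment of the ``red zone'' contribution. You assert that because the ray $\arg\alpha=\theta$ crosses $\{\re\rho=0\}$ transversally, ``the red zone captures only $O(1)$ values of $\lambda$.'' This is wrong. The red zone is defined by $\abs{\re\psi(\nu,\lambda)}\le b$ with $\psi=\lambda\rho(\nu/\lambda)$, not by $\abs{\re\rho}\le b$. Since $\partial_\lambda\psi=\sqrt{\alpha^2+1}$ degenerates as $\alpha\to i$, for $\theta$ near $\tfrac\pi2$ the set of $\lambda$ putting a fixed $\nu$ in the red zone is an interval of width $O(a^{1/3})$ in $\lambda$. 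With $\lambda\asymp a$ there, the Weyl law gives $O(a^{n-1}\cdot a^{1/3})=O(a^{n-2/3})$ eigenvalues --- not $O(1)$ --- uniformly in $\theta$. The paper computes exactly this count (the $K(\nu)\le W_\Sigma[(a/\abs{\gamma(\theta)}+ca^{1/3})^n-(a/\abs{\gamma(\theta)}-ca^{1/3})^n]+O(a^{n-1})$ estimate). Each such term contributes $O(a\log a)$ by Lemma~\ref{Iratio.z}, so $\Sigma_S=O(a^{n+1/3}\log a)$, which is still $o(a^{n+1})$; the conclusion survives, but your stated count does not, and it is not merely imprecise --- a uniform-in-$\theta$ argument must confront the cube-root blow-up at the turning point.

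Two smaller points. First, your pointwise bound $\log(1+C\kappa_\lambda)\le 2\lambda x_3[-\re\rho]_++O(\log\brak\nu)$ cannot be summed naively over the infinitely many $\lambda$ with $\re\rho(\alpha,x_3)>0$: there the $[\,\cdot\,]_+$ term vanishes and the $O(\log\brak\nu)$ per-term overcount would diverge. You must instead use the exponential decay $\log(1+C\kappa_\lambda)\preceq e^{-2\lambda\re\rho(\alpha,x_3)}$ in that range, as the paper does for $\Sigma_L$, to obtain $O(a^n)$. Second, letting $\eta=a^{-\gamma}$ decay with $a$ requires tracking how the constant $C$ in Lemma~\ref{prop1.tau} and the constants in all the Bessel estimates depend on $\eta$; the paper sidesteps this by taking $\limsup_{a\to\infty}$ for fixed $\eta$ first and only then letting $x_4\to 1$, which is cleaner.
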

\begin{proof}
We again use the conjugation symmetry to restrict our attention to $\im\nu\ge0$. 
For $\lambda > M$, where $M$ is the constant from Lemma \ref{Iratio.lemma}, 
\eqref{bl.psi} and \eqref{kappa.k} give
\begin{equation}\label{kappa.k2}
\kappa_\lambda(s) \le
C \, e^{-2\lambda \re\rho(\alpha,x_3)} g_\lambda(\nu),
\end{equation}
where $\nu = s-\tfrac{n}2$ and $\alpha := \frac{\nu}{\lambda}$ as always, and
\[
g_\lambda(\nu) := \abs{\frac{I_{\nu}(\lambda)}{I_{-\nu}(\lambda)}}.
\]

Given $\nu=ae^{i\theta}$, we split the sum (\ref{est.tau}) (minus the $\lambda=0$ term) according to the sign
of $\re\rho(\tfrac{\nu}{\lambda},x_3)$.
The sum over $\lambda$ with $\re\rho(\alpha,x_3)< 0$ is finite
and we further divide it into contributions from the Poisson kernel and from the scattering matrix.  
Since the terms in the sum \eqref{est.tau} with $\lambda \le M$ are $O(a)$ 
(for $\lambda > 0$ by \eqref{S0.elt}, \eqref{S0.bd} and \eqref{bl.bd}), we have
\[
\log\abs{\tau(s)}\le \Sigma_L + \Sigma_P + \Sigma_S + O(a),
\]
where
\begin{equation*}
 \begin{split}
\Sigma_L & := \sum_{\lambda>0:\,\re\rho(\alpha,x_3)\ge 0} \log \bigl( 1+Ce^{-2\lambda
\re\rho(\alpha,x_3)}g_\lambda(\nu)
\bigr), \\
\Sigma_P & :=  \sum_{\lambda>0:\,\re\rho(\alpha,x_3)< 0} 2\lambda\re[-\rho(\alpha,x_3)], \\
\Sigma_S & :=  \sum_{\lambda>0:\,\re\rho(\alpha,x_3)< 0} \log(1+Cg_\lambda(\nu)).
 \end{split}
\end{equation*}

Let us now index the spectrum $\{\lambda>0\}$ by $\lambda_k$, $k\in\bbN$.
Define the constant $\omega=W_\Sigma^{-1/n}$, so that $\lambda_k\sim\omega k^{1/n}$ by Weyl's law. 
Recall that $\re \rho(\alpha,x)$ is monotonically increasing in $x$,
and define the function $A(\theta)$ and the constants $q,Q>0$ by
\[
\re\rho(A(\theta)e^{i\theta},x_3)=0, \qquad
q\omega k^{1/n} \le \lambda_k\le Q\omega k^{1/n}
.\]

First we show that $\Sigma_L$ contributes only of lower order. For $a$ sufficiently large, the factors $g_\lambda(\nu)$ in $\Sigma_L$ are bounded by Lemma \ref{Iratio.lemma}, and therefore we have
\[
\Sigma_L \le C \sum \exp\bigl(-2\lambda_k[\re \rho(\alpha,x_3)]_+\bigr)
\]
for $a$ sufficiently large,
where the sum is for $k$ from $\lfloor(\tfrac{a}{Q\omega A(\theta)})^n\rfloor$ to $\infty$. 
The monotonicity in $x$ of $\re\rho(\alpha,x)$ yields
\[
\Sigma_L \le  C \sum \exp\bigl(-2[\re\rho(ae^{i\theta}, q\omega k^{1/n} x_3)]_+\bigr)
.\]
Switching to an integral over $k$ and substituting $x=\tfrac{a}{\omega k^{1/n}}$, we find
\[
\Sigma_L \le Ca^n \int_0^{QA(\theta)} \frac1{x^{n+1}}\exp\Bigl(-\frac{2a}{x}
[\re\rho\bigl(xe^{i\theta},q x_3\bigr)]_+\Bigr) \: dx + O(1)
.\]
Since $\rho(0,qx_3)>0$, the integral exists, and the fact that the integrand is decreasing in $a$ shows that $\Sigma_L=O(a^n)$.

For an estimate of $\Sigma_P$, define numbers $\mu_k$ with 
$\lambda_k=(1+\mu_k)\omega k^{1/n}$ for all k. Then
\[
2 \lambda_k [-\re\rho(\alpha,x_3)]_+ = 2 \omega k^{1/n} 
 \bigl[-\re\rho\bigl(\tfrac{a}{\omega k^{1/n}}e^{i\theta},(1+\mu_k)x_3\bigr)\bigr]_+
.\]
The number of $\mu_k$ with absolute value greater than $\eta/x_3$ is finite and independent of $a$. The corresponding
terms in the sum $\Sigma_P$ are $O(a)$. For all other $k$ we have $(1+\mu_k)x_3\ge x_3-\eta=:x_4$ and hence, by
monotonicity of $\re\rho$ and letting $x=\tfrac{a}{\omega k^{1/n}}$ as above,
\begin{equation*}
\Sigma_P \le \sum 2\omega k^{1/n} [-\re \rho(xe^{i\theta},x_4)]_+ + O(a), 
\end{equation*}
where the sum is for $k$ from $1$ to $\lceil(\tfrac{a}{q\omega A(\theta)})^n\rceil$.
Switching to the corresponding integral over $x$, we find
\[
\Sigma_P \le \frac{2n}{\omega^n} \int_0^\infty \frac{[-\re \rho(xe^{i\theta},x_4)]_+}{x^{n+2}} \: dx 
\cdot a^{n+1} + O(a).
\]
With a simple change of variables we can scale the $x_4$ out of the integral, yielding
\begin{equation}\label{Sigma.P}
\Sigma_P \le x_4^{-n} B(\theta) a^{n+1} + O(a).
\end{equation}

The number of terms in the sum $\Sigma_S$ is,
as for $\Sigma_P$, less than $\lceil(\tfrac{a}{q\omega A(\theta)})^n\rceil$. Most of them are bounded by Lemma
\ref{Iratio.lemma}, and Lemma \ref{Iratio.z} states that the remaining terms,
those for which $\nu$ is in the 'red zone' of Figure~\ref{NuPsiPlot.fig},
are $O(a\log a)$ for $a\in\Lambda$. More precisely:
\begin{equation}\label{Sigma.S}
\Sigma_S \le O(a^n) + K(\nu) \, c \, a \log a \qquad \qquad (a\in\Lambda),
\end{equation}
where, with $b$ being the constant from Lemma \ref{Iratio.lemma}, 
\begin{equation}\label{Knu}
K(\nu)=\#\bigl\{k>0:\, \im\psi(\nu,\lambda_k)\ge 0, \, \abs{\re\psi(\nu,\lambda_k)}\le b\}
.\end{equation}
Now suppose that for fixed $\nu$ we have $\mu_1,\mu_2$ with
\[\re\psi(\nu,\mu_1)=-b, \qquad \re\psi(\nu,\mu_2)=+b,\]
where the condition on $\mu_1$ is replaced by $\im\psi(\nu,\mu_1)=0$ for small $\arg\nu$. We observe that $K(\nu)$ is given by the number of eigenvalues $\lambda_k$ between $\mu_1$ and $\mu_2$. Since the width of the region
\[\{\nu: \abs{\re\psi(\nu,\mu)}\le b,\, \im\psi(\nu,\mu)\ge0\}\]
in the $\nu$-plane is $O(\mu^{1/3})$ uniformly in $\theta=\arg\nu$, we can estimate
\[K(\nu)\le W_\Sigma\Biggl(\biggl(\frac{a}{\abs{\gamma(\theta)}}+ca^{1/3}\biggr)^n
-\biggl(\frac{a}{\abs{\gamma(\theta)}}-ca^{1/3}\biggr)^n\Biggr)+O(a^{n-1}).\]
This shows $K(\nu)=O(a^{n-2/3})$, and by \eqref{Sigma.S} we obtain $\Sigma_S=O(a^{n+1/3} \log a)$.

We conclude
\[
\log\abs{\tau(\nh+ae^{i\theta})} \le x_4^{-n} B(\theta) a^{n+1}+C_{\vep,\eta}\, a^{n+1/3} \log a 
,\]
where the constant might blow up as $\eta\rightarrow0$. This gives
\[
\limsup_{a\rightarrow \infty} \left[ \frac{\log\abs{\tau(\nh+ae^{i\theta})}}{a^{n+1}}-B(\theta) \right]
\le (x_4^{-n} - 1) B(\theta)
,\]
which, by letting $x_4\rightarrow 1$, completes the proof.
\end{proof}

\subsection{Completing the sharp estimate}
\begin{proof}[Proof of Theorem \ref{main.thm}]
With the asymptotics of the scattering phase, as stated in Corollary \ref{scphase.cor}, the relative counting formula from Proposition \ref{relcount.prop} becomes 
\[
(n+1)\int_0^a \frac{N_g(t)-N_0(t)}{t}\:dt= 2W_Ka^{n+1}
+ \frac{n+1}{2\pi} \int_{-\tfrac\pi2}^{\tfrac\pi2} \log\abs{\tau(\nh+ae^{i\theta})} \: d\theta + o(a^{n+1})
.\]
We can then apply the asymptotic for $N_0(t)$ from Proposition \ref{modelcount.prop} and the scattering determinant estimate from Proposition \ref{prop2.tau}.  Comparing the result to \eqref{constants} shows that for Theorem \ref{main.thm} it remains to show that the contribution of
\[\int_{\tfrac\pi2-\vep a^{-2}\le\abs\theta\le\tfrac\pi2} \log\abs{\tau(\nh+ae^{i\theta})} \: d\theta,\]
is of lower order.  If we assume $a\in\Lambda$ then by the Hadamard factorization \eqref{tau.div.eq} of $\tau$ and the Minimum Modulus Theorem \cite[Thm.~8.71]{Titchmarsh}, we have the estimate
\[
\abs{\tau(\nh+ae^{i\theta})}\le C_\epsilon \exp\big(a^{n+1+\epsilon}\big),
\]
for any $\epsilon>0$, provided $\beta > n+1$ in the definition of $\Lambda$.    This implies 
\[
\int_{\tfrac\pi2-\vep a^{-2}\le\abs\theta\le\tfrac\pi2} \log\abs{\tau(\nh+ae^{i\theta})} \: d\theta = O(a^{n-1+\epsilon}),
\]
which suffices to complete the proof.
\end{proof}

\end{document}